\def\bbR{\mathrm{I\!R}}
\def\bbC{{\mathchoice {\setbox0=\hbox{$\displaystyle\mathrm{C}$}
\hbox{\hbox to0pt{\kern0.4\wd0\vrule height0.9\ht0\hss}\box0}} 
{\setbox0=\hbox{$\textstyle\mathrm{C}$}\hbox{\hbox 
to0pt{\kern0.4\wd0\vrule height0.9\ht0\hss}\box0}} 
{\setbox0=\hbox{$\scriptstyle\mathrm{C}$}\hbox{\hbox 
to0pt{\kern0.4\wd0\vrule height0.9\ht0\hss}\box0}} 
{\setbox0=\hbox{$\scriptscriptstyle\mathrm{C}$}\hbox{\hbox 
to0pt{\kern0.4\wd0\vrule height0.9\ht0\hss}\box0}}}}
\newcommand{\ein}{\mathrm{e}}
\newcommand{\sca}{\mathrm{s}}
\newcommand{\w}{^{\phantom i}}
\newcommand{\nnh}{\hskip-1pt}
\def\blue{\color{blue}}
\newcommand{\ric}{\mathrm{r}}
\newcommand{\sym}{\mathrm{b}}
\def\cwedge{\bigcirc\kern-1.07em\wedge\ }
\newcommand{\hyp}{\hskip.5pt\vbox
{\hbox{\vrule width2.5ptheight0.5ptdepth0pt}\vskip2pt}\hskip.5pt}
\def\vt{{\tau\hskip-4.55pt\iota\hskip.6pt}} 
\def\evt{{\tau\hskip-3.55pt\iota\hskip.6pt}} 
\def\hi{\varphi}
\def\ay{\varphi}
\def\sg{\kappa}
\def\khi{\chi}
\def\wf{f}
\def\hs{\hskip.7pt}
\def\hh{\hskip.4pt}
\def\nh{\hskip-.7pt}
\def\hn{\hskip-.4pt}
\def\w{^{\phantom i}}
\def\tM{\hskip3pt\widetilde{\hskip-3ptM\nh}\hs}
\def\hM{\hskip3pt\widehat{\hskip-3ptM\nh}\hs}
\def\tY{\hskip1.7pt\widetilde{\hskip-1.7ptY\hskip-1.3pt}\hskip1.3pt}
\def\hY{\hskip1.7pt\widehat{\hskip-1.7ptY\hskip-1.9pt}\hskip1.9pt}
\def\bY{\hskip.2pt\overline{\hskip-.2ptY\hskip-1.3pt}\hskip1.3pt}
\def\tQ{\hskip2.2pt\widetilde{\hskip-2.2ptQ\nh}\hs}
\def\hQ{\hskip3pt\widehat{\hskip-3ptQ\nh}\hs}
\def\bQ{\hskip3pt\overline{\hskip-3ptQ\nh}\hs}
\def\bna{\hs\overline{\nh\nabla\nh}\hs}
\def\tna{\hs\widetilde{\nh\nabla\nh}\hs}
\def\hna{\hs\widehat{\nh\nabla\nh}\hs}
\def\td{\hskip2pt\widetilde{\hskip-2ptd\hskip-1pt}\hskip1pt}
\def\hd{\hskip2pt\widehat{\hskip-2ptd\hskip-.3pt}\hskip.3pt}
\def\bg{\hskip1.2pt\overline{\hskip-1.2ptg\hskip-.3pt}\hskip.3pt}
\def\tg{\hskip2pt\widetilde{\hskip-2ptg\hskip-1pt}\hskip1pt}
\def\hg{\hskip2pt\widehat{\hskip-2ptg\hskip-.3pt}\hskip.3pt}
\def\brc{\hskip.2pt\overline{\hskip-.2pt\mathrm{r}\nh}\hs}
\def\bei{\hskip.2pt\overline{\hskip-.2pt\mathrm{e}\nh}\hs}
\def\tei{\tilde{\mathrm{e}}}
\def\hei{\hat{\mathrm{e}}}
\def\bsc{\hskip.2pt\overline{\hskip-.2pt\mathrm{s}\nh}\hs}
\def\tsc{\tilde{\mathrm{s}}}
\def\hsc{\hat{\mathrm{s}}}
\def\bde{\overline{\nh\Delta\nh}}
\def\tde{\widetilde{\nh\Delta\nh}}
\def\hde{\widehat{\nh\Delta\nh}}
\def\ta{\hskip1.2pt\widetilde{\hskip-1.2pt\alpha\hskip-.3pt}\hskip.3pt}
\def\ha{\hskip1.2pt\widehat{\hskip-1.2pt\alpha\hskip-.3pt}\hskip.3pt}
\def\tb{\hskip1.2pt\widetilde{\hskip-1.2pt\beta\hskip-.3pt}\hskip.3pt}
\def\hb{\hskip1.2pt\widehat{\hskip-1.2pt\beta\hskip-.3pt}\hskip.3pt}
\def\gm{\gamma}
\def\vg{\varGamma}
\def\ve{\varepsilon}
\def\h{\delta}
\def\kp{\theta}
\newtheorem{thm}{Theorem}[section]
\newtheorem{lem}[thm]{Lemma}
\theoremstyle{definition}
\theoremstyle{remark}
\newtheorem{rem}[thm]{Remark}
\numberwithin{equation}{section}
\begin{document}

\title[Weakly Ein\-stein con\-for\-mal products]{Weakly Ein\-stein 
con\-for\-mal products}

\author[A.\,Derdzinski]{ Andrzej Derdzinski}
\address[Andrzej Derdzinski]{Department of Mathematics, 
The Ohio State University\hskip-1pt\\
Columbus, OH 43210, USA}
\email{andrzej@math.ohio-state.edu}

\author[J.\,H.\,Park]{JeongHyeong Park}
\address[JeongHyeong Park]{Department of Mathematics, 
Sungkyunkwan University, Suwon, 16419, Korea}
\email{parkj@skku.edu}

\author[W\nnh.\,Shin]{Wooseok Shin}
\address[Wooseok Shin]{Department of Mathematics,\nnh\ Sungkyunkwan
University,\nnh\ Suwon,\nnh\ 16419, Korea}
\email{tlsdntjr@skku.edu}

\subjclass[2020]{53C25, 53C18, 53B20}
\keywords{weakly Einstein, con\-for\-mal product}

\begin{abstract}\hskip-2.7ptOne says that a Riemannian
four-man\-i\-fold is \emph{weakly 
Ein\-stein} if the three-in\-dex  contraction of its curvature tensor against
itself equals a function times the metric. Since this includes all
four-man\-i\-folds that are Ein\-stein, or con\-for\-mal\-ly flat
and scalar-flat, the term \emph{proper} may be used for weakly Ein\-stein
manifolds (or metrics) not belonging to the latter two classes.
We establish two clas\-si\-fi\-ca\-tion-type results about proper weakly
Einstein metrics con\-for\-mal to Riemannian
products. This includes constructions of new examples, among them -- some of
(local) co\-ho\-mo\-ge\-ne\-i\-ty two, in contrast with the two previously 
known narrow classes of examples, having co\-ho\-mo\-ge\-ne\-i\-ty zero and 
one. We also exhibit a simple coordinate description of one of the known 
examples, the EPS space, which shows that it is a con\-for\-mal product
and constitutes a single lo\-cal-homo\-thety type. Finally, we prove that
there exist no proper weakly Einstein manifolds with harmonic curvature.
\end{abstract}

\maketitle

\section{Introduction}\label{in}
\setcounter{equation}{0}
One calls a Riemannian four-man\-i\-fold \emph{weakly Ein\-stein\/}
\cite[p.\,112]{EPS-RM} 
when the triple contraction of its curvature tensor against itself is a 
functional multiple of the metric. 
According to formula (\ref{iff}) below, this follows if the 
four-man\-i\-fold in question is Ein\-stein, or con\-for\-mal\-ly flat 
and scalar-flat. Weakly Ein\-stein manifolds (or metrics) not belonging to
these two classes will 
be referred to as \emph{proper}. 

Known examples of proper weakly Einstein manifolds consist of the EPS space
\cite[p.\,602]{EPS-MS}, and a 
very narrow class of 
K\"ah\-ler surfaces \cite[Sect.\,12]{derdzinski-euh-kim-park}. In
Sect.\,\ref{ep} we provide a simple coordinate description of the EPS space,
which shows that it represents \emph{a single 
lo\-cal-homo\-thety type}. This is a surprising conclusion, since the
original description \cite{EPS-MS} of the EPS space realizes it in the form
of left-in\-var\-i\-ant metrics on Lie groups corresponding to
infinitely many non-iso\-mor\-phic Lie algebras.

As another consequence of its coordinate description, the EPS space 
admits two kinds of 
warp\-ed-prod\-uct decompositions, with the base/fibre dimensions $\,2+2$
(a unique one) and $\,3+\nh1\,$ (infinitely many), which raises the
question whether there exist other
proper weakly Einstein warped products and -- more generally -- 
\emph{con\-for\-mal products\/} (that is, man\-i\-folds/me\-trics 
con\-for\-mal to Riemannian products).

The present paper answers this question in the affirmative, which
at the same time leads to new examples of proper weakly Ein\-stein
four-man\-i\-folds. Those constructed in Sect.\,\ref{gc} have,
as a consequence of Remark~\ref{sglbd}, local co\-ho\-mo\-ge\-ne\-i\-ty two,
which makes them fundamentally different (see Sect.\,\ref{pn}) from all the
currently known examples, mentioned above.

Our main result consists of two clas\-si\-fi\-ca\-tion-type
theorems for proper weakly
Ein\-stein con\-for\-mal products. We start from some constructions of
examples, phras\-ed as Theorems~\ref{exatt} and~\ref{exaot}, and
followed by four sections in which we show that the
resulting metrics are in fact weakly Ein\-stein, and prove the existence of 
the geometric structures used in the constructions. The classification 
itself is provided by Theorems~\ref{cpsfm} and~\ref{cptho}, stating that,
locally, at generic points, the constructions just mentioned yield
all proper weakly Ein\-stein con\-for\-mal products, except 
possibly some nongeneric ones of type $\,3+1$.

Sect.\,\ref{pn} clarifies how 
the new examples of proper weakly Ein\-stein four-man\-i\-folds, 
arising from Theorems~\ref{exatt} and~\ref{exaot}, 
differ from the previously known ones, and where they stand 
within the classification, established in 
\cite{derdzinski-park-shin}, of algebraic curvature tensors having the weakly 
Ein\-stein property. 
Finally, in Sect.\,\ref{hc}, we combine our classification theorems with
some results of \cite{derdzinski-25} to show that a proper weakly
Ein\-stein Riemannian four-man\-i\-fold cannot have harmonic curvature.

\section{Preliminaries}\label{pr}
\setcounter{equation}{0}
Manifolds are by definition connected, all manifolds and tensor fields
smooth; 
$\,R,W\nnh\nh,\ric,\ein\,$ stand for the curvature, Weyl, 
Ric\-ci and Ein\-stein tensors of a Riemannian metric $\,g\,$ in dimension 
$\,n$, and $\,\sca\,$ for its scalar curvature, $\,R\,$ having the sign such
that $\,\ric_{ij}\w=g^{pq}\nh R_{ipjq}\w$. Thus, $\,\ein=\ric-\sca g/n$,
while, if $\,n\ge3$, 
\[
\begin{array}{l}
W_{\!ijpq}\w=R_{ijpq}\w
-\displaystyle{\frac1{n-2}}\,(g_{ip}\w\ric_{jq}\w
+g_{jq}\w\ric_{ip}\w-g_{jp}\w\ric_{iq}\w
-g_{iq}\w\ric_{jp}\w)\\
\hskip64.5pt
+\,\displaystyle{\frac{\sca}{(n-1)(n-2)}}
\hs(g_{ip}\w g_{jq}\w-g_{jp}\w g_{iq}\w)\hh.
\end{array}
\]
As already mentioned in the Introduction, a Riemannian four-man\-i\-fold
$\,(M\nh,g)\,$ is said to be weakly Ein\-stein when the three-in\-dex
contraction of its curvature tensor $\,R\,$ equals some function $\,\phi\,$ 
times the metric (in coordinates: $\,R_{ikpq}\w R_j\w{}^{kpq}\nnh
=\phi g_{ij}\w$). According to \cite[formula (4.7)]{derdzinski-euh-kim-park},
\begin{equation}\label{iff}
(M\nh,g)\,\mathrm{\ is\ weakly\ Ein\-stein\ if\ and\ only\ if\ 
}\,\,6\hh W\nnh\nh\ein=-\sca\hh\ein\hh,
\end{equation}
with $\,W\nnh\nh\ein\,$ denoting here the usual action of 
algebraic curvature tensors on symmetric $\,(0,2)\,$ tensors: 
$\,[W\nnh\nh\ein]_{ij}\w=\hs W_{\!ipjq}\w\ein\hs^{pq}\nh$.

For a torsion-free connection $\nabla$ with the Ricci tensor $\ric$ on a
manifold $M$, every vector field $v$ satisfies the \emph{Boch\-ner identity} 
\begin{equation}\label{bch}
\delta\nabla\nh v\,=\,\ric(\cdot,v)\,+\,d\delta v ,
\end{equation}
$\delta$ being the divergence. In fact, the coordinate form 
$\,v^{\hh k}{}\hskip-2.7pt_{,ik}\w
=\ric_{ik}\w v^{\hh k}+v^{\hh k}{}\hskip-2.7pt_{,ki}\w$ of
(\ref{bch}) arises via contraction from
the Ricci identity
$\,v^{\hh p}{}\hskip-2.7pt_{,ij}\w-v^{\hh p}{}\hskip-2.7pt_{,ji}\w
=R_{ijk}\w{}^pv^{\hh k}\nh$. 
For functions $\,\alpha:M\to\bbR\,$ and $\,\hi:M\to\bbR\smallsetminus\{0\}\,$ 
on a Riemannian manifold $\,(M\nh,g)$,
\begin{equation}\label{dqe}
\begin{array}{rl}
\mathrm{a)}&
dQ=2[\nabla\nh d\alpha](\nabla\nh\alpha,\cdot)\mathrm{,\ \ where\ \ }\,Q
=g(\nabla\nh\alpha,\nnh\nabla\nh\alpha),\\
\mathrm{b)}&\hi^3\nnh\Delta\hn\hi^{-1}\hn
=\,2g(\nabla\nnh\hi,\nnh\nabla\nnh\hi)\,-\,\hi\Delta\hn\hi\hh,
\end{array}
\end{equation}
(\ref{dqe}-a) obvious since, in local coordinates,
$\,Q\nh_{,i}\w=(\alpha^{,k}\alpha_{,k}\w)_{,i}\w
=2\hh\alpha\nh_{,ik}\w\alpha^{,k}\nh$. 
In an oriented Riemannian four-man\-i\-fold $\,(M\nh,g)$, we denote by 
$\,W^\pm\nnh\nh:\Lambda\nnh^\pm\nnh\hn M\to\Lambda\nnh^\pm\nnh\hn M\,$
the vec\-tor-bun\-dle mor\-phisms arising when one restricts the Weyl
tensor $\,W\nnh$ acting on bi\-vec\-tors to the sub\-bun\-dles
$\,\Lambda\nnh^\pm\nnh\hn M\,$ of self-dual and anti-self-dual bi\-vec\-tors.
\begin{rem}\label{denot}We always denote by $\,\nabla\nh,\bna\nh,\hna\nh,\tna$ 
the gradient operators and Le\-vi-Ci\-vi\-ta connections of the metrics 
$\,g,\bg,\hg,\tg$, and 
similarly for the scalar curvatures $\,\sca$, Ric\-ci/Ein\-stein tensors
$\,\ric,\,\ein\,$ and 
La\-plac\-i\-ans $\,\Delta$. We will also write
\begin{equation}\label{cnv}
\begin{array}{l}
\bY\nnh\nh=\hs\bde\hs\hi\hh,\qquad
\hY\nnh\nh=\hs\hde\hs\hi\hh,\qquad
\tY\nnh\nh=\hs\tde\hs\hi\hh,\\
\bQ=\bg(\bna\nh\hi,\nnh\bna\nh\hi)\hh,\hskip7pt
\hQ=\hg(\hna\nh\hi,\nnh\hna\nh\hi)\hh,\hskip7pt
\tQ=\tg(\tna\nh\hi,\nnh\tna\nh\hi)\hh.
\end{array}
\end{equation}
Given a product metric $\,\bg=\hg+\tg\,$ on a product manifold 
$\,\hM\times\tM\nh$, we interpret
$\,\hY\nnh\nh,\tY\nnh\nh,\hQ,\tQ\,$ and
$\,\hna\nh d\hi,\tna\nh d\hi\,$ as the results of ``partial'' 
operations, involving the restrictions of
$\,\hi:\hM\times\tM\to\bbR\,$ to sub\-man\-i\-folds of the
form $\,\hM\times\{z\}\,$ and $\,\{y\}\times\tM\nh$, where
$\,(y,z)\in\hM\times\tM\nh$, so that
$\,\bde\hi=\hY\nnh+\tY\,$ and 
$\,\bg(\bna\nh\hi,\nnh\bna\nh\hi)=\hs\hQ+\tQ$.
\end{rem}
\begin{rem}\label{kahsf}
On a K\"ah\-ler surface,
$\,W^+\nnh\nh:\Lambda\nnh^+\nnh\nh M\to\Lambda\nnh^+\nnh\nh M\nh$, for
the standard o\-ri\-en\-ta\-tion, has the 
spectrum $\,(\sca/6,-\sca/\nh12,-\sca/\nh12)$, the eigen\-val\-ue function 
$\,\sca/6\,$ being realized by the K\"ah\-ler form treated as a 
bi\-vec\-tor \cite[p.\,459]{derdzinski-00}.
\end{rem}
\begin{rem}\label{prsrf}
In a Riemannian product $\,(M\nh,g)\,$ of two surfaces, which is locally
K\"ah\-ler for two complex structures corresponding to opposite orientations,
Remark~\ref{kahsf} implies that 
both $\,W^\pm\nnh\nh:\Lambda\nnh^\pm\nnh M\to\Lambda\nnh^\pm\nnh M\,$ have the 
same spectrum $\,(\sca/6,-\sca/\nh12,-\sca/\nh12)$. Thus, con\-for\-mal flatness
of $\,(M\nh,g)\,$ is equivalent to the vanishing of its scalar curvature.
\end{rem}
\begin{rem}\label{noprd}
A proper weakly Einstein manifold cannot be a Riemannian product. Namely, 
the triple contraction of the curvature tensor against itself 
behaves ``multiplicatively'' under Riemannian products, is equal to $\,0\,$ in 
dimension one, and to $\,2K^2\nh g\,$ for a surface metric $\,g\,$ with
Gauss\-i\-an curvature $\,K$. Thus, for a $\,3+1\,$ (or, $\,2+2$) product,
being weakly Einstein is equivalent to flatness or, respectively, to being
Ein\-stein or con\-for\-mally flat. (The last two options correspond to
equal/op\-po\-site Gauss\-i\-an curvatures of the factor metrics, cf.\
Remark~\ref{prsrf}.)
\end{rem}
\begin{rem}\label{sezro}
Vanishing of $\,\sca\hh\ein\,$ implies, even without assuming 
real-an\-a\-lyt\-ic\-i\-ty, that one of $\,\sca,\ein\,$ is identically zero
\cite[Remark~2.1]{derdzinski-euh-kim-park}. Thus, any conformally flat weakly
Einstein manifold has, by (\ref{iff}), either $\,\ein=0$, or
$\,W\nnh\nh=\hs0\,$ and $\,\sca=0$.
\end{rem}
\begin{rem}\label{genrc}Given several self-ad\-joint en\-do\-mor\-phisms 
$\,A_j\w$ of vector bundles over a manifold $\,M\,$ endowed with Riemannian
fibre metrics, we say that a point $\,x\in M\,$ is
\emph{generic relative to the spectra of\/} (all) $\,A_j\w$ if, for some 
neighborhood $\,\,U\hh$ of $\,x$, 
every $\,A_j\w$ restricted to $\,\,U\hh$ has a constant number of distinct 
eigen\-val\-ues. It is clear that such generic points form a dense open subset
$\,M'$ of $\,M\nh$, and on each connected component of $\,M'$ the 
eigen\-val\-ues of each $\,A_j\w$ constitute smooth \emph{eigen\-val\-ue
functions}, while the corresponding eigen\-spaces form smooth {\it 
eigen\-space sub\-bundles\/} of the vector bundles in question.
\end{rem}
\begin{rem}\label{bideg}Eigen\-vec\-tors of a linear en\-do\-mor\-phism 
$\,D\hs$ of a real vector space $\,\mathcal{E}\nh$, corresponding to mutually 
different eigen\-val\-ues, are linearly independent (or else in 
a $\,D$-in\-var\-i\-ant subspace $\,\mathcal{E}\hh'\hskip-2pt$ spanned by 
such eigen\-vec\-tors, $\,D\hs$ would have 
more than $\,\dim\hs\mathcal{E}\hh'\hskip-2pt$ eigen\-val\-ues). 
Consequently, the space of polynomial functions 
$\,\mathcal{E}\nh\times\mathcal{E}\nh\to\bbR\,$ is the direct sum of 
the sub\-spaces $\,\mathcal{P}\hskip-3pt_{i,j}\w$, each formed by
polynomials of bi\-de\-gree $\,(i,j)$. Namely, each 
$\,\mathcal{P}\hskip-3pt_{i,j}\w$ is the $\,(i+qj)$-eigen\-space of the
directional derivative operator $\,d_w\w$, for the linear vector field
$\,w\,$ with $\,w_{(y,z)}\w=(y,qz)$, where $\,q\,$ is any fixed irrational
number. 
\end{rem}
\begin{rem}\label{divis}If $\,P\nnh,S\,$ are $\,C^\infty\nnh$
functions on a manifold 
$\,M\,$ and $\,0\,$ is a regular value of $\,S$, while $\,P\nh=0\,$ 
along $\,\varSigma=S^{-\nnh1}\nh(0)$, then 
$\,P\nh/S:M\smallsetminus\varSigma\to\bbR$ has a $\,C^\infty\nnh$
extension to
$\,M\nh$. In fact, identifying a neighborhood of $\,z\in\varSigma\,$ with
a convex neighborhood $\,\,U\,$ of $\,z=0\,$ in $\,\bbR\hn^n\nh$,
$\,n=\dim M\nh$, so that 
$\,S\,$ is the coordinate function $\,x^1\nh$, one has
$\,P(x^1\nh,\dots,x^n)=P(x^1\nh,\dots,x^n)-P(0,x^2\nh,\dots,x^n)=
x^1\hskip-3pt\int_0^1[\partial\hn_1\w P](tx^1\nh,x^2\nh,\dots,x^n)\,dt$.  
\end{rem}

\section{Warped products}\label{wp}
\setcounter{equation}{0}
The \emph{warped product\/} of Riemannian manifolds $\,(\varSigma,\gamma)\,$
and $\,(\varPi\nh,\h)\,$ with the \emph{warping function\/} 
$\,\wf:\varSigma\to(0,\infty)\,$ is the Riemannian manifold
\begin{equation}\label{war}
(M\nh,g)\,=\,(\varSigma\times\varPi,\,\gamma+\nh\wf^2\nh\h),
\end{equation}
$\gamma,\h,\wf\,$ standing here for also the pull\-backs of 
$\,\gamma,\h,\wf\,$ to the product $\,M\nh=\varSigma\times\varPi\nh$. One
calls $\,(\varSigma,\gamma)\,$ the \emph{base\/} and $\,(\varPi\nh,\h)\,$ the 
\emph{fibre\/} of (\ref{war}).
As $\,\gamma+\nh\wf^2\h=\wf^2\hh[\hs\wf^{-\nh2}\hh\gamma+\h\hs]$,
\begin{equation}\label{wrp}
\begin{array}{l}
\mathrm{a\ warped\ product\ is\ just\ a\ Riemannian\
manifold\ con\-for\-mal}\\
\mathrm{to\hs\hh\ a\hs\hh\ Riemannian\hs\hh\ product\hs\hh\ via\hs\hh\
multiplication\hs\hh\ by\hs\hh\ a\hs\hh\ positive}\\
\mathrm{function\hn\ which\hn\ is\hn\ constant\hn\ along\hn\ one\hn\ of\hn\
the\hn\ factor\hn\ manifolds.}
\end{array}
\end{equation}
We will need the easy and well-known observation -- see, e.g., 
\cite[formula (A.3)]{derdzinski-piccione-20} -- is that, in any warped product
with the Ric\-ci tensor $\,\ric$,
\begin{equation}\label{rog}
\begin{array}{rl}
\mathrm{a)}&\mathrm{the\,\ base\,\ and\,\ fibre\,\ factor\,\ distributions\,\
are\hs\ }\,\ric\hyp\mathrm{orthogonal,}\\
\mathrm{b)}&\mathrm{if\hs\ the\hs\ fibre\hs\ dimension\hs\ is\hs\ less\hs\
than\hs\ three,\hs\ all\hs\ nonzero\hs\ vec}\hyp\\
&\mathrm{tors\ tangent\ to\hh\ the\hh\ fibre\hh\ distribution\hn\ are\hn\ eigenvectors\hn\
of\nh\ }\,\ric.
\end{array}
\end{equation}
\begin{rem}\label{wpone}
It is well known \cite[Lemma\,19.2]{derdzinski-00}, 
\cite[Remark 3.1]{derdzinski-piccione-20} that
a one-di\-men\-sion\-al distribution on a Riemannian manifold is, locally, the 
fibre distribution of a warp\-ed-prod\-uct decomposition if and only if it
is spanned, locally, by a Kil\-ling field $\,v\,$ without zeros having 
an in\-te\-gra\-ble orthogonal complement $\,v^\perp\nnh$.
\end{rem}
\begin{rem}\label{nnunq}If the fibre $\,(\varPi\nh,\h)\,$ of (\ref{war}) 
a Riemannian product, for instance, 
$\,(\varPi\nh,\h)=(\varPi'\nh\times\varPi''\nnh,\h'\nnh+\h'')$, then
(\ref{war}) $\,(\varPi\nh,\h)\,$ is, obviously, also a warped product with the
base $\,(\varSigma\times\varPi',\,\gamma+\nh\wf^2\nh\h')\,$ 
and fibre $\,(\varPi''\nnh,\h'')$.

Thus, any warped product having the base and fibre dimensions $\,m,2\,$ and
a flat fibre is also, locally, a warped product with the dimensions $\,m+1\,$
and $\,1$.
\end{rem}
\begin{rem}\label{isowp}By (\ref{wrp}), isometries of 
the fibre act isometrically on the warp\-ed-prod\-uct manifold, 
and so do isometries of the base preserving the warping function.
\end{rem}

\section{The EPS space}\label{ep} 
\setcounter{equation}{0}
The \emph{EPS space\/} \cite[p.\,112]{EPS-RM} is an example of
a proper weakly
Einstein manifold $\,(M\nh,g)\,$ arising as follows: 
$\,M\,$ is a Lie group which carries  
left-in\-var\-i\-ant $\,g$-or\-tho\-nor\-mal vector fields
$\,u_1\w,\dots,u_4\w$, trivializing $\,T\nh M\nh$, and having the  
Lie brackets
\begin{equation}\label{lbr}
\begin{array}{l}
[u_1\w,u_2\w]\hs=\hs au_2\w,\quad[u_1\w,u_3\w]\hs
=\hs-au_3\w-bu_4\w,\quad[u_1\w,u_4\w]\hs
=\hs bu_3\w-\hh au_4\w,\\
{}[u_2\w,u_3\w]\,=\,[u_2\w,u_4\w]\,=\,[u_3\w,u_4]\,=\,0\mathrm{,\nnh\ with\nnh\
constants\nnh\ }\,a\ne0\,\mathrm{\ and\ }\,b.
\end{array}
\end{equation}
The acronym `EPS' was introduced in \cite{arias-marco-kowalski}. 
The Lie algebras $\,\mathfrak{h}\,$ defined by (\ref{lbr})
\begin{equation}\label{inf}
\mathrm{form\ infinitely\ many\ Lie}\hyp\mathrm{algebra\ isomorphism\ types,}
\end{equation}
as pointed out in \cite[Theorem 8.1]{arias-marco-kowalski}. In fact,
the $\hs\mathrm{ad}\hs$ action on
$\hs[\mathfrak{h},\mathfrak{h}]\hn=\hn\mathrm{span}\hn\,(u_2\w,u_3\w,u_4\w)$ by
the co\-set of $\,u_1\w$ spanning
$\,\mathfrak{h}/[\mathfrak{h},\mathfrak{h}]\,$ has the complex
characteristic roots $\,a\,$ and $\,-a\pm b\hh i$, which makes
$\,|\hh b/a|$ an algebraic invariant of $\,\mathfrak{h}$, ranging over 
$\,[\hs0,\infty)$.

From \cite[formula (3.14)]{EPS-MS} it is immediate that the only nonzero
components 
of the Ric\-ci tensor $\,\ric\,$ in the frame $\,u_1\w,\dots,u_4\w$ are equal 
to
\begin{equation}\label{rei}
\mathrm{the\ eigen\-val\-ues\ \ }\,\,\ric\nh_{11}\w=-3a^2\nh,\quad\ric_{22}\w
=a^2\nh,\quad\ric_{33}\w=\ric_{44}\w=-a^2\nh.
\end{equation}
The $\,1$-form $\,\zeta\,$ on $\,M\,$ with
$\,(\zeta(u_1\w),\zeta(u_2\w),\zeta(u_3\w),\zeta(u_4\w))=(2a,0,0,0)\,$
is closed due to (\ref{lbr}), so that, locally,
$\,\nabla\hn\vt=2a\hh u_1\w$ for some function $\,\vt$. It is also
immediate from (\ref{lbr}) that the vector fields
$\,\partial\nh_1\w,\dots,\partial\nh_4\w$ given by
\begin{equation}\label{par}
\begin{array}{l}
2a\hs\partial\nh_1\w=u_1\w,\qquad
2a\hs\partial\hn_2\w=\hs e^{-\evt/2}u_2\w,\\
2a\hs\partial\hn_3\w=\hs e\hs^{\evt/2}[\hs\cos\hs(b\hh\vt/2a)\hs u_3\w\,
+\,\sin\hs(b\hh\vt/2a)\hs u_4\w],\\
2a\hs\partial\nh_4\w
=\hs e\hs^{\evt/2}[\hs-\nnh\hn\sin\hs(b\hh\vt/2a)\hs u_3\w\,
+\,\cos\hs(b\hh\vt/2a)\hs u_4\w]
\end{array}
\end{equation}
all commute. They are thus, locally, the coordinate vector fields for some
local coordinates $\,\vt,\xi,\eta,\zeta$, where the first coordinate may be chosen
equal to our function $\,\vt$, as $\,d\vt\,$ sends 
$\,\partial\nh_1\w,\dots,\partial\nh_4\w$ to $\,1,0,0,0$.
Clearly, in the coordinates $\,\vt,\xi,\eta,\zeta$,
\begin{equation}\label{eps}
\begin{array}{l}
4a^2\nh g\,=\hs\,d\vt^2\hs+\,e^{-\nh\evt}\hn d\xi^2\hs
+\,e\hs^\evt(d\eta^2\hs+\,d\zeta^2)\mathrm{,\hh\ that\hs\
is,\hh\ the\hs\ component}\\
\mathrm{functions\ of\ the\ metric\ }4a^2\nh g\mathrm{\ form\ the\
matrix\ }\mathrm{diag}\hs(1,\,e^{-\nh\evt}\nh,\,e^\evt\nh,\,e^\evt).
\end{array}
\end{equation}
This has two immediate consequences. First,
\begin{equation}\label{htt}
\mathrm{the\ EPS\ space\ represents\ a\ single\
lo\-cal}\hyp\mathrm{homo\-thety\ type.}
\end{equation}
Secondly, the EPS space admits both $\,2+2\,$ and $\,3+1\hs$
warp\-ed-prod\-uct
decompositions (cf.\ the final clause of Remark~\ref{nnunq}), with
$\,\wf\,$ and $\,\h\,$ in (\ref{war}) given by
\begin{equation}\label{fet}
\begin{array}{rl}
\mathrm{a)}&\wf=\hs e\hs^{\evt/2}\mathrm{\ and\
}\,\h=d\eta^2\nh+d\zeta^2\nh\mathrm{,\ or}\\
\mathrm{b)}&\wf=\hs e\hs^{-\evt/2}\mathrm{\ and\ }\,\h=d\xi^2\nh\mathrm{,\
or}\\
\mathrm{c)}&\wf=\hs e\hs^{\evt/2}\mathrm{\ and\ }\,\h=d\hat\zeta^2\nh,
\end{array}
\end{equation}
where $\,\hat\zeta\,$ in (\ref{fet}-c) belongs to a pair $\,\hat \eta,\hat\zeta\,$
arising from $\,\eta,\zeta\,$ via any fixed rotation (so that 
$\,d\hat \eta^2\nh+d\hat\zeta^2\nh=d\eta^2\nh+d\zeta^2$). 
Thus,  even locally, among the warp\-ed-prod\-uct decompositions of
the EPS space,
\begin{equation}\label{ttu}
\begin{array}{rl}
\mathrm{i)}&\mathrm{the\ }\,\,2+2\,\,\mathrm{\ decomposition\ is\ unique,\
and\ given\ by\ \,(\ref{fet}}\hyp\mathrm{a),}\\
\mathrm{ii)}&\mathrm{there\ are\ infinitely\ many\ decompositions\ of\
the\ type\ }\,3+\nh1,\\
\end{array}
\end{equation}
`unique' meaning \emph{up to multiplying\/ $\,\wf\,$ and $\,\h\,$ by
mutually inverse constants}. In fact, (\ref{ttu}-i) is obvious from
(\ref{rog}-b) and (\ref{rei}), while (\ref{fet}-c) 
yields (\ref{ttu}-ii).
\begin{thm}\label{isogp}The simply connected complete model of an EPS space
consists of\/ $\,\bbR\hn^4$ with the metric\/ 
$\,g=d\vt^2\nh+e\nh^{-\nh\evt}\hn d\xi^2\nh+e\hs^\evt(d\eta^2\nh+d\zeta^2)\,$ in the
coordinates\/ $\,\vt,\xi,\eta,\zeta$. Its full isom\-e\-try group is 
five-di\-men\-sion\-al, acts on\/ $\,\bbR\hn^4$ transitively, and has the
identity component\/ $\,G\,$ formed by the mappings
\begin{equation}\label{dfb}
\bbR\hn^4\nh=\bbR\hn^2\nnh\nh\times\bbC\ni(\vt,\xi,\eta+i\zeta)
\mapsto(\vt+2c,\,e\hs^c\nh \xi+p,\,e\nh^{-\hn c}\nh w(\eta+i\zeta)+q)
\end{equation}
depending on the 
parameters\/ $\,c,p\in\bbR\,$ and\/ $\,w,q\in\bbC\,$ with\/ $\,|w|=1$.
\end{thm}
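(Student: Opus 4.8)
The plan is to prove the four assertions (the maps are isometries, $G$ is a five-di\-men\-sion\-al group, the action is transitive, and $G=\mathrm{Isom}_0$ with $\dim\mathrm{Isom}=5$) by direct computation for the first three, the only structural step being the upper bound $\dim\mathrm{Isom}\le5$.

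First I would check, writing $z=\eta+i\zeta$, that each map in (\ref{dfb}) pulls $d\vt,d\xi,dz$ back to $d\vt,\,e^c d\xi,\,e^{-c}w\,dz$; since $|w|=1$, the three summands of $g$ are individually preserved, because $e^{-(\vt+2c)}(e^cd\xi)^2=e^{-\vt}d\xi^2$ and $e^{\vt+2c}|e^{-c}w|^2|dz|^2=e^{\vt}|dz|^2$, so every such map is an isometry. Composing two of them shows the parameters combine by $c=c_1+c_2,\ w=w_1w_2,\ p=e^{c_2}p_1+p_2,\ q=e^{-c_2}w_2q_1+q_2$; hence $G$ is a group, and as its parameter space $\bbR\times\bbR\times S^1\times\bbC$ is connected of real dimension $1+1+1+2=5$ and the action is a faithful smooth homomorphism, $G$ is a connected five-di\-men\-sion\-al Lie group of isometries. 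Transitivity is immediate: from $(0,0,0)$ the choices $c=\vt_0/2$, $p=\xi_0$, $q=\eta_0+i\zeta_0$ (and any $w$) reach an arbitrary point $(\vt_0,\xi_0,\eta_0+i\zeta_0)$.

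Since $G$ acts transitively by isometries, $(\bbR\hn^4,g)$ is Riemannian homogeneous and hence complete, while simple connectedness is obvious; together with (\ref{eps}) and (\ref{htt}) this identifies $(\bbR\hn^4,g)$ as the simply connected complete model of the EPS lo\-cal-homo\-thety type. The five one-pa\-ram\-e\-ter subgroups of $G$ produce the Kil\-ling fields $2\partial_\vt+\xi\partial_\xi-\eta\partial_\eta-\zeta\partial_\zeta$, $\partial_\xi$, $\partial_\eta$, $\partial_\zeta$ and $\eta\partial_\zeta-\zeta\partial_\eta$, which are manifestly independent, reconfirming $\dim\mathrm{Isom}\ge5$. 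For the reverse bound I would invoke transitivity to write $\dim\mathrm{Isom}=4+\dim\mathrm{Isom}_x$ at a point $x$; as a Riemannian isometry fixing $x$ is determined by its differential there, the isotropy representation embeds $\mathrm{Isom}_x$ into $O(T_xM)\cong O(4)$, and every element of the image preserves $\ric$ at $x$, hence its eigenspace splitting. By (\ref{rei}) and homogeneity the three eigenvalues $-3a^2,a^2,-a^2$ are distinct (as $a\ne0$), with multiplicities $1,1,2$, so the image lies in the block group $O(1)\times O(1)\times O(2)$ of dimension $1$; thus $\dim\mathrm{Isom}_x\le1$ and $\dim\mathrm{Isom}\le5$, forcing equality. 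Finally, $G$ is a connected subgroup of full dimension $5$, hence open in $\mathrm{Isom}_0$ and therefore equal to it.

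The main obstacle is exactly this upper bound: one must rule out any enlargement of the isotropy beyond the evident circle. The decisive point is the rigid Ric\-ci eigen\-val\-ue pattern $1,1,2$, which confines the linear isotropy to the one-di\-men\-sion\-al orthogonal block group $O(1)\times O(1)\times O(2)$ -- precisely the $S^1$ already realized inside $G$ by the rotation parameter $w$.
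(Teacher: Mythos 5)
Your proof is correct and follows essentially the same route as the paper's: verify directly that the maps (\ref{dfb}) form a connected five-di\-men\-sion\-al group acting transitively by isometries, then cap $\dim\mathrm{Isom}$ at $5$ by bounding the isotropy via the Ric\-ci eigen\-val\-ue pattern of (\ref{rei}). Your write-up merely makes explicit what the paper compresses into two sentences (the composition law, the five Kil\-ling fields, and the identification of the linear isotropy with the block group $O(1)\times O(1)\times O(2)$, whose dimension-one bound is exactly the paper's ``iso\-tropy Lie algebra of dimension less than $2$'').
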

\begin{proof}The mappings (\ref{dfb}) are easily seen to form a group acting
on $\,\bbR\hn^4$ effectively and transitively, via isom\-e\-tries of $\,g$, 
as they separately preserve $\,d\vt^2\nh$, $\,e^{-\nh\evt}\hn d\xi^2$ and
$\,e\hs^\evt(d\eta^2\nh+d\zeta^2)$. Our assertion now follows: the dimension
of the full isom\-e\-try group equals $\,5\,$ due to the fact 
that, by 
(\ref{rei}), the Lie algebra of the iso\-tropy
group acting in any given tangent space is of dimension less than $\,2$.
\end{proof}
The conclusion (\ref{htt}) above may seem surprising in the light of
(\ref{inf}). The puzzle is resolved by Theorem~\ref{isogp}: 
$\,G\,$
has infinitely many four-di\-men\-sion\-al sub\-groups, acting on
$\,\bbR\hn^4$ simply transitively, with mutually non\-iso\-mor\-phic Lie
algebras.

For (\ref{fet}-b) and (\ref{fet}-c), the $\,\xi\,$ or $\,\hat\zeta\,$
coordinate vector field is, obviously, a 
Kil\-ling field with an in\-te\-gra\-ble orthogonal complement, as 
required in Remark~\ref{wpone}.

The\hn\ EPS\hn\ space\hn\ has\hn\ natural\hn\ high\-er-di\-men\-sion\-al\hn\
generalizations\hn\ \cite[Remark\hn\ 3.1]{euh-kim-nikolayevsky-park}.

\section{Weakly Ein\-stein curvature tensors}\label{wt}
\setcounter{equation}{0}
In the following theorem $\,\mathcal{T}\hs$ is an oriented Euclidean
$\,4$-space, $\,\sca,\ein,W\nnh$ denote the scalar curvature, Ein\-stein
(trace\-less Ric\-ci) and Weyl tensors of the algebraic curvature tensor in
question, $\,W^\pm\nnh\nh:\Lambda\nnh^\pm\to\Lambda\nnh^\pm$ being the
restrictions of $\,W\nnh$ to the spaces $\,\Lambda\nnh^\pm$ of self-dual and
anti-self-dual bi\-vec\-tors in $\,\mathcal{T}\nnh$. 
Any positive or\-tho\-nor\-mal basis $\,u_1\w,\dots,u_4\w$ of
$\,\mathcal{T}\hs$ leads to the length $\,\sqrt{2\,}$ orthogonal 
bases of $\,\Lambda\nnh^\pm$ given by
\begin{equation}\label{bas}
u_1\w\wedge u_2\w\pm u_3\w\wedge u_4\w\hh,\quad 
u_1\w\wedge u_3\w\pm u_4\w\wedge u_2\w\hh,\quad 
u_1\w\wedge u_4\w\pm u_2\w\wedge u_3\w\hh,
\end{equation}
so that $\,\Lambda\nnh^\pm$ are both canonically oriented. Cf.\ 
\cite[the lines following (7.4)]{derdzinski-park-shin}.
\begin{thm}\label{weact}
Given a non-Ein\-stein, weakly Ein\-stein algebraic curvature tensor in\/
$\,\mathcal{T}\hs$ such that, respectively,
\begin{equation}\label{sth}
\begin{array}{rl}
\mathrm{a)}&\sca=0\,\mathrm{\ and\ }\,\ein\,\mathrm{\ does\ not\ have\ two\
distinct\ double\ eigen\-val\-ues,\ or}\\
\mathrm{b)}&\sca\ne0\,\mathrm{\ and\ }\,\ein\,\mathrm{\ does\ not\ have\ two\
distinct\ double\ eigen\-val\-ues,\ or}\\
\mathrm{c)}&\ein\,\mathrm{\ has\ two\ distinct\
double\ eigen\-val\-ues,}
\end{array}
\end{equation}
there exists a positive or\-tho\-nor\-mal basis\/ 
$\,u_1\w,\dots,u_4\w$ of\/ $\,\mathcal{T}\nnh$, consisting of eigen\-vec\-tors 
of\/ $\,\ein$, for which the bases\/ {\rm(\ref{bas})} of\/ 
$\,\Lambda\nnh^\pm\nnh\hn$ di\-ag\-o\-nal\-ize\/ $\,W^\pm\nh$, with the
corresponding ordered quadruple and two triples of eigen\-values having 
the respective form
\begin{equation}\label{ari}
\begin{array}{rl}
\mathrm{a)}&(\mu_1\w,\mu_2\w,\mu_3\w,\mu_4\w)\,\mathrm{\ and\
}\,(\pm c_2\w,\pm c_3\w,\pm c_4\w)\hh,\\
\mathrm{b)}&(-\nnh\lambda,-\nh\mu,\mu,\lambda)\,\mathrm{\ and\
}\,(\pm c_2\w-\sca/\nh12,\,\pm c_3\w-\sca/\nh12,\,\pm c_4\w+\sca/6)\hh,\\ 
\mathrm{c)}&(-\nnh\lambda,-\nnh\lambda,\lambda,\lambda)\,\mathrm{\ and\
}\,(\pm c_2\w-\sca/\nh12,\,\pm c_3\w+\xi-\sca/\nh12,\,\pm c_4\w-\xi+\sca/6)\hh,\\
\end{array}
\end{equation}
the parameters\/
$\,\mu_1\w,\mu_2\w,\mu_3\w,\mu_4\w,c_2\w,c_3\w,c_4\w,\sca,\lambda,\mu,\xi
\in\bbR\,$ having
\begin{equation}\label{hvg}
\begin{array}{rl}
\mathrm{i)}&c_2\w\hs+\,c_3\w\hs+\,c_4\w\hs=\,0\,\,\mathrm{\,\ in\hh\ all\hh\ 
cases,\hs\ as\hh\ well\hh\ as}\\
\mathrm{ii)}&\mu_1\w\hn+\hh\mu_2\w\hn+\hh\mu_3\w\hn+\hh\mu_4\w\hn=\hh\sca\hh
=\hh0\,\,\mathrm{\ in\ case\ (\ref{ari}}\hyp\mathrm{a)},\\
\mathrm{iii)}&\lambda>\mu\ge0\ne\sca\,\mathrm{\ in\
(\ref{ari}}\hyp\mathrm{b),\ 
}\,\lambda>0\,\mathrm{\ in\ (\ref{ari}}\hyp\mathrm{c).}
\end{array}
\end{equation}
\end{thm}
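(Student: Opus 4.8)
The plan is to turn the weakly Einstein condition $\,6\hh W\nnh\nh\ein=-\sca\hh\ein\,$ into a $\,3\times3\,$ matrix identity and then run a case analysis governed by the eigenvalue multiplicities of $\,\ein$. First I would invoke the isomorphism $\,S^2_0\mathcal{T}\cong\Lambda\nnh^+\nnh\otimes\Lambda\nnh^-$ (a trace\-less symmetric endomorphism $\,\ein\,$ corresponds, via the bases $(\ref{bas})$, to a $\,3\times3\,$ matrix $\,E$), under which the map $\,\ein\mapsto W\nnh\nh\ein\,$ of $(\ref{iff})$ becomes $\,E\mapsto W^+\nnh E+EW^-$; a short check on diagonal inputs fixes the constant, so that weak Einsteinness is equivalent to
\[
W^+\nnh E+EW^-=-\sca\hs E/6\hh.
\]
Because $\,\sca,\ein,W^\pm$ are independent data for an algebraic curvature tensor, this single matrix equation is the entire content of the hypothesis. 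Since $\,\ein\,$ is symmetric I may choose an or\-tho\-nor\-mal eigenframe $\,u_1\w,\dots,u_4\w$, with eigenvalues $\,\lambda_1\w,\dots,\lambda_4\w$ summing to $\,0$; in this frame $\,E\,$ is diagonal, with entries the complementary pairwise sums $\,E_1\w=\lambda_3\w+\lambda_4\w$, $\,E_2\w=\lambda_2\w+\lambda_4\w$, $\,E_3\w=\lambda_2\w+\lambda_3\w$.

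With $\,E=\mathrm{diag}\hs(E_1\w,E_2\w,E_3\w)\,$ the matrix equation reads $\,E_i[(W^+)_{ii}+(W^-)_{ii}+\sca/6]=0\,$ on the diagonal and $\,(W^+)_{ij}E_j+E_i(W^-)_{ij}=0\,$ off it. For a fixed pair $\,i\ne j\,$ the two off-diagonal relations form a $\,2\times2\,$ system whose determinant is $\,E_j^2-E_i^2$, nonzero exactly when $\,|E_i|\ne|E_j|$, in which case $\,(W^\pm)_{ij}=0$. A direct computation shows $\,|E_i|=|E_j|\,$ precisely when two of the $\,\lambda$'s coincide, so if all $\,\lambda_i\w$ are distinct then all $\,|E_i|\,$ are distinct and $(\ref{bas})$ already di\-ag\-o\-nal\-izes both $\,W^\pm$. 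The diagonal relations then pin the eigenvalue shapes. When $\,\sca\ne0\,$ not every $\,E_i\,$ can be nonzero (else summing the three diagonal relations gives $\,0=\mathrm{tr}\,W^++\mathrm{tr}\,W^-=-\sca/2$); exactly one vanishing $\,E_i\,$ forces the corresponding complementary pairs of $\,\lambda$'s to be opposite, i.e.\ eigenvalues $\,(-\nnh\lambda,-\nh\mu,\mu,\lambda)$ — case $(\ref{ari}$-b); two vanishing $\,E_i\,$ give the two-dou\-ble case $(\ref{ari}$-c); and for $\,\sca=0\,$ with $\,E\,$ invertible the identity makes $\,W^+\nnh$ similar to $\,-W^-$, whence $\,\mathrm{spec}\,W^-=-\mathrm{spec}\,W^+$ — case $(\ref{ari}$-a). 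In each situation the half-sums $\,\tfrac12((W^+)_{aa}+(W^-)_{aa})\,$ and half-differences $\,\tfrac12((W^+)_{aa}-(W^-)_{aa})\,$ of the diagonalized $\,W^\pm$ reproduce the $\,\sca$-shifts and the $\,c_a\w$ in $(\ref{ari})$, while $\,c_2\w+c_3\w+c_4\w=0\,$ of $(\ref{hvg})$ is just $\,\mathrm{tr}\,W^+=\mathrm{tr}\,W^-=0$.

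The hard part will be the degenerate sub-cases, where $\,\ein\,$ has a repeated eigenvalue (a double, or a triple). There some $\,|E_i|=|E_j|$, the $\,2\times2\,$ determinant vanishes, and the matrix equation no longer forces the offending $\,(W^\pm)_{ij}\,$ to vanish but only couples $\,(W^+)_{ij}\,$ to $\,(W^-)_{ij}$. My resolution is to spend the residual freedom of rotating the frame inside the degenerate $\,\ein$-eigen\-space, and the crux is how such a rotation acts on $\,\Lambda\nnh^\pm$. The key computation, cleanest in the quaternionic model $\,\mathcal{T}=\bbH\,$ with $\,\Lambda\nnh^\pm\nnh\cong\mathfrak{su}(2)$, is that a rotation of a coordinate $\,2$-plane induces rotations about a fixed axis in $\,\Lambda\nnh^+\nnh$ and in $\,\Lambda\nnh^-\nnh$ by \emph{opposite} angles, whereas the stabilizer of a unit vector acts on $\,\Lambda\nnh^-\nnh$ as the conjugate by $\,K=\mathrm{diag}\hs(-1,-1,1)\,$ of its action on $\,\Lambda\nnh^+$. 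For a double eigenvalue this opposite-an\-gle behavior lets one rotation si\-mul\-ta\-ne\-ous\-ly di\-ag\-o\-nal\-ize the single unconstrained $\,2\times2\,$ block of each of $\,W^+$ and $\,W^-$ — the diagonalizing angles are forced to be negatives of one another by the surviving diagonal relation, matching the induced rotation; for a triple eigenvalue the equation collapses to $\,W^-=-KW^+K$, so di\-ag\-o\-nal\-izing $\,W^+$ by the stabilizer $\,SO(3)\,$ automatically di\-ag\-o\-nal\-izes $\,W^-=-W^+$. Verifying these induced-rotation signs, and checking that the off-diagonal entries already killed survive the extra rotation (they do, since it fixes the distinguished axis $\,e_1^\pm$), is the delicate technical core; once it is in place, reading off $(\ref{ari})$ and $(\ref{hvg})$ is routine, and the orderings $\,\lambda>\mu\ge0\,$ and $\,\lambda>0\,$ are achieved by relabeling.
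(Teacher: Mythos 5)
Your proposal is correct in outline and, in the generic and single-degenerate cases, in detail; note that the paper itself contains no argument for Theorem~\ref{weact} beyond the citation of \cite{derdzinski-park-shin}, so what you have written is a self-contained reconstruction rather than a variant of an in-paper proof. The backbone checks out: under $\,S^2_0\hh\mathcal{T}\cong\Lambda\nnh^+\nnh\otimes\Lambda\nnh^-\nh$, realized by the derivation action of a trace\-less symmetric tensor (sending $\,\Lambda\nnh^-$ to $\,\Lambda\nnh^+$ with diagonal entries the complementary pair sums $\,E_a\w$), the condition (\ref{iff}) is exactly the Sylvester equation $\,W^+\nnh E+EW^-\nnh=-\sca\hs E/6\,$ -- testing on a product-of-surfaces curvature tensor, where $\,W^\pm\nnh=\mathrm{diag}\hs(\sca/6,-\sca/\nh12,-\sca/\nh12)\,$ and $\,E=\mathrm{diag}\hs(2\nu,0,0)$, confirms your constant. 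Your determinant criterion $\,E_j^2-E_i^2\,$ for killing off-diagonal entries, the equivalence of $\,|E_i|=|E_j|\,$ with a coincidence among the eigen\-values of $\,\ein$, the trichotomy ``no zero $\,E_a\w\Rightarrow\sca=0$'', ``one zero $\,\Rightarrow(-\nnh\lambda,-\nh\mu,\mu,\lambda)$'', ``two zeros $\,\Rightarrow$ two distinct doubles'', and the recovery of the shifts $\,-\sca/\nh12$, $\,\sca/6\,$ and of (\ref{hvg}-i) from the diagonal relations $\,E_a\w[(W^+)_{aa}\w+(W^-)_{aa}\w+\sca/6]=0\,$ plus trace-free\-ness are all verifiably right, as is the opposite-angle action on $\,\Lambda\nnh^\pm$ of a rotation in the eigen\-plane of a single double eigen\-val\-ue, whose diagonalizing angles are indeed forced opposite by the surviving relations $\,(W^+)_{23}\w=(W^-)_{23}\w$ and $\,(W^+)_{22}\w-(W^+)_{33}\w=-[(W^-)_{22}\w-(W^-)_{33}\w]$.

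There is, however, one concrete soft spot: case (\ref{sth}-c). There $\,E=\mathrm{diag}\hs(-2\lambda,0,0)\,$ in a suitable ordering, so \emph{no} diagonal relation survives for the $\,(2,3)\,$ blocks of $\,W^\pm\nnh$, which are completely unconstrained and mutually independent; your single-rotation mechanism, which diagonalizes both blocks only when the two required angles are negatives of one another, has nothing forcing that here and would fail. The repair uses a fact your blanket ``opposite angles'' claim obscures: with the bases (\ref{bas}), a rotation by $\,\theta_1\w$ in $\,\mathrm{span}\hs(u_1\w,u_2\w)\,$ combined with a rotation by $\,\theta_2\w$ in $\,\mathrm{span}\hs(u_3\w,u_4\w)\,$ acts about the first axis of $\,\Lambda\nnh^+$ by $\,\theta_1\w+\theta_2\w$ and of $\,\Lambda\nnh^-$ by $\,\theta_1\w-\theta_2\w$ (rotation in the complementary plane induces \emph{equal}, not opposite, angles); since in case (c) \emph{both} planes are $\,\ein$-eigen\-planes, the two induced angles are independent and the two blocks can be diagonalized separately -- this is also exactly why the extra parameter $\,\xi\,$ appears in (\ref{ari}-c), the half-sums of the last two diagonal entries being constrained only jointly. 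Two smaller points: in the triple-eigen\-val\-ue case the collapse to $\,W^-\nnh=-KW^+K\,$ is justified only after noting that $\,\sca=0\,$ is forced there (all $\,E_a\w\ne0$, so your own summing argument applies), and the sign/ordering normalizations (\ref{hvg}-iii) require the orientation-preserving relabeling bookkeeping spelled out in Remark~\ref{cnvrs}. With these repairs your proof is complete.
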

\begin{proof}See \cite[Theorems 1.3\hs--\hs1.5]{derdzinski-park-shin}.
\end{proof}
\begin{rem}\label{cnvrs}Conversely, according to 
\cite[Theorems 1.3\hs--\hs1.5]{derdzinski-park-shin}, each of the choices 
(\ref{ari}), 
for $\,\mu_1\w,\mu_2\w,\mu_3\w,\mu_4\w,c_2\w,c_3\w,c_4,\sca,\lambda,\mu,\xi\,$ 
with (\ref{hvg}-i) and (\ref{hvg}-ii), defines, via the
Sing\-er\hh-Thorpe theorem 
\cite[Sect.\,1.128]{besse}, an algebraic curvature tensor $\,R\,$
which is weakly
Ein\-stein. Even though \cite{derdzinski-park-shin} assumes (\ref{hvg}-iii), 
this assumption can be relaxed for the following reasons. First, we may
assume that $\,\lambda\ne0\,$ in (\ref{ari}-c), for otherwise $\,R\,$ is
Ein\-stein and hence, by (\ref{iff}), weakly Ein\-stein. Similarly,
the cases $\,\lambda<0\,$ in (\ref{ari}-c) and $\,\mu=\lambda\,$ (or,
$\,\mu=-\lambda$) in (\ref{ari}-b) can be excluded, as the former amounts to
the second part of (\ref{hvg}-iii), with
$\,(u_3\w,u_4\w,u_1\w,u_2\w,-\nh\lambda)\,$ used instead
of $\,(u_1\w,\dots,u_4\w,\lambda)$, and the latter reduces 
(\ref{ari}-b) to (\ref{ari}-c) for $\,\xi=0\,$ (or, respectively,
to (\ref{ari}-c) with $\,(u_1\w,u_3\w,u_4\w,u_2\w)\,$ instead 
of $\,(u_1\w,\dots,u_4\w)$, and $\,\xi=\sca/4$). Also, (\ref{ari}-b) with
$\,\sca=0\,$ is (\ref{ari}-a) for
$\,(\mu_1\w,\mu_2\w,\mu_3\w,\mu_4\w)
=(-\nnh\lambda,-\nh\mu,\mu,\lambda)$.

The next four possibilities amount to four lines, each being a
condition imposed on $\,\lambda,\mu\,$ in (\ref{ari}-b), followed by
a nonuple replacing $\,(u_1\w,\dots,u_4\w,c_2\w,c_3\w,c_4\w,\lambda,\mu)$,
so that the first part of (\ref{hvg}-iii) is satisfied by the new data:
\begin{equation}\label{rpl}
\begin{array}{ll}
-\nnh\lambda<\nnh-\mu\le0\hh,
&(u_1\w,u_2\w,u_3\w,u_4\w,c_2\w,c_3\w,c_4\w,\lambda,\mu)\hh,\\
-\nnh\lambda<\mu\le0\hh,
&(u_1\w,u_3\w,u_2\w,-u_4\w,c_3\w,c_2\w,c_4\w,\lambda,-\mu)\hh,\\
\lambda<\nnh-\mu\le0\hh,
&(u_4\w,u_2\w,u_3\w,-u_1\w,c_3\w,c_2\w,c_4\w,-\nnh\lambda,\mu)\hh,\\
\lambda<\mu\le0\hh,
&(u_4\w,u_3\w,u_2\w,u_1\w,c_2\w,c_3\w,c_4\w,-\nnh\lambda,-\mu)\hh.\\
\end{array}
\end{equation}
There are four more options left: 
$\,-\mu<\nnh-\lambda\le0$, $\,\,-\mu<\lambda\le0$,
$\,\,\mu<\nnh-\lambda\le0\,$ and 
$\,\mu<\lambda\le0$. We reduce them to (\ref{rpl}) by replacing
$\,(u_1\w,u_2\w,u_3\w,u_4\w,c_2\w,c_3\w,c_4\w,\lambda,\mu)$ with
$\,(u_2\w,u_1\w,u_4\w,u_3\w,c_2\w,c_3\w,c_4\w,\mu,\lambda)$.
\end{rem}
\begin{rem}\label{wknwn}
It is well known -- see, e.g.,\ \cite[Lemma 7.1]{derdzinski-park-shin} --
that every pair of length $\,\sqrt{2\,}$ positive orthogonal 
bases of $\,\Lambda\nnh^\pm\nnh$ has the form (\ref{bas}) for some 
positive or\-tho\-nor\-mal basis $\,u_1\w,\dots,u_4\w$ of $\,\mathcal{T}\nnh$,
which is unique up to an overall sign change.
\end{rem}
\begin{rem}\label{nincs}
Each of the following nine choices of the data
$\,c_2\w,c_3\w,c_4\w,\xi,\sca\,$ clearly leads, via (\ref{ari}-c) 
(as well as (\ref{ari}-b), for the three quadruples with $\,\xi=0$), 
to the ordered spectra of $\,24\hh W^+$ and 
$\,24\hh W^-$ appearing at the end of each line below:
\begin{equation}\label{chb}
\begin{array}{rlll}
\mathrm{i)}&(c_2\w,c_3\w,c_4\w,\xi)\,=\,(0,0,0,\sca/8)\hh,
&\hskip-3pt(-\nh2\sca,\sca,\sca)\hh,&\hskip-3pt(-\nh2\sca,\sca,\sca)\hh,\\
\mathrm{ii)}&(c_2\w,c_3\w,c_4\w,\xi)\,=\,(-\sca/4,\sca/4,0,0)\hh,
&\hskip-3pt(-\nh8\sca,4\sca,4\sca)\hh,&\hskip-3pt(4\sca,-\nh8\sca,4\sca)\hh,\\
\mathrm{iii)}&(c_2\w,c_3\w,c_4\w,\xi)\,=\,(-\sca/4,0,\sca/4,\sca/4)\hh,
&\hskip-3pt(-\nh8\sca,4\sca,4\sca)\hh,&\hskip-3pt(4\sca,4\sca,-\nh8\sca)\hh,\\
\mathrm{iv)}&(c_2\w,c_3\w,c_4\w,\xi)\,=\,(\sca/4,-\sca/4,0,0)\hh,
&\hskip-3pt(4\sca,-\nh8\sca,4\sca)\hh,&\hskip-3pt(-\nh8\sca,4\sca,4\sca)\hh,\\
\mathrm{v)}&(c_2\w,c_3\w,c_4\w,\xi)\,=\,(0,0,0,\sca/4)\hh,
&\hskip-3pt(-\nh2\sca,4\sca,-\nh2\sca)\hh,&\hskip-3pt(-\nh2\sca,4\sca,-\nh2\sca)\hh,\\
\mathrm{vi)}&(c_2\w,c_3\w,c_4\w,\xi)\,=\,(0,\sca/8,-\sca/8,\sca/8)\hh,
&\hskip-3pt(-\nh2\sca,4\sca,-\nh2\sca)\hh,&\hskip-3pt(-\nh2\sca,-\nh2\sca,4\sca)\hh,\\
\mathrm{vii)}&(c_2\w,c_3\w,c_4\w,\xi)\,=\,(\sca/4,0,-\sca/4,\sca/4)\hh,
&\hskip-3pt(4\sca,4\sca,-\nh8\sca)\hh,&\hskip-3pt(-\nh8\sca,4\sca,4\sca)\hh,\\
\mathrm{viii)}&(c_2\w,c_3\w,c_4\w,\xi)\,=\,(0,-\sca/8,\sca/8,\sca/8)\hh,
&\hskip-3pt(-\nh2\sca,-\nh2\sca,4\sca)\hh,&\hskip-3pt(-\nh2\sca,4\sca,-\nh2\sca)\hh,\\
\mathrm{ix)}&(c_2\w,c_3\w,c_4\w,\xi)\,=\,(0,0,0,0)\hh,
&\hskip-3pt(-\nh2\sca,-\nh2\sca,4\sca)\hh,&\hskip-3pt(-\nh2\sca,-\nh2\sca,4\sca)\hh.
\end{array}
\end{equation}
In each of the above nine cases it immediately follows that
\begin{equation}\label{bts}
\begin{array}{rl}
\mathrm{a)}&\mathrm{both\ }\,W^\pm\mathrm{\hn\ have\ the\ 
same\ unordered\ spectrum\ 
}\hs\{\sigma,-\hn\sigma\nnh/\hn2,-\hn\sigma\nnh/\hn2\}\mathrm{,\nnh\hn\ 
and}\\
\mathrm{b)}&\mathrm{the\ unique\ simple\ eigen\-val\-ue\
}\,\sigma\hh\mathrm{\ is\ one\
of\hs}-\nnh\nh\sca/3,-\sca/\nh12,\hs\sca/6\mathrm{,\nh\ if\ }\,\sca\ne0\hh.
\end{array}
\end{equation}
\end{rem}
\begin{rem}\label{cvrsl}
Let there be 
given a non-Ein\-stein, weakly Ein\-stein algebraic curvature tensor in 
$\,\mathcal{T}\hs$ such that 
$\,W^\pm\nnh$, both nonzero, have the same unordered spectrum with a repeated
eigen\-valu\-e. 
Using Theorem~\ref{weact}, we then get one of the nine cases of (\ref{chb}). 
In fact, (\ref{ari}-a) is excluded:
$\,\{\sigma,-\hn\sigma\nnh/\hn2,-\hn\sigma\nnh/\hn2\}
=\,\{-\hn\sigma,\sigma\nnh/\hn2,\sigma\nnh/\hn2\}$ only if $\,\sigma=0$. Thus,
(\ref{ari}-b) or (\ref{ari}-c) follows and, focusing just on 
the spectra of $\,W^\pm\nnh$, we may treat the former 
as a subcase of the latter, with $\,\xi=0$. There are now 
nine possibilities, based on choosing which eigen\-value in 
$\,\Lambda\nnh^+$ and which in $\,\Lambda\nnh^-$ is simple. Listed in the
order first-first, first-sec\-ond, ..., third-sec\-ond, third-third, the nine
cases easily lead to (\ref{chb}).
\end{rem}
\begin{rem}\label{htinv}Given a proper weakly Einstein oriented four-man\-ifold
$\,(M\nh,g)$ and a point $\,x\in M\nh$, Theorem~\ref{weact} allows us to
form an ordered string of eleven scalars, consisting of the scalar
curvature $\,\sca(x)\,$ followed by the ordered quadruple and two triples
in the respective line of (\ref{ari}), representing the spectra of the 
Ein\-stein tensor $\,\ein\,$ and $\,W^\pm\nh$ at $\,x\,$
in a suitable positive or\-tho\-nor\-mal basis $\,u_1\w,\dots,u_4\w$ of 
$\,T\hskip-3pt_x\w\hn M\,$ and the corresponding bases (\ref{bas}) 
of self-dual and anti-self-dual bi\-vec\-tors. When treated as defined only
up to rescaling and certain specific permutations, this  e\-lev\-en-sca\-lar
string is clearly \emph{a lo\-cal-homo\-thety invariant\/} of the metric
$\,g\,$ at $\,x$.
\end{rem}
 
\section{Permutation groups}\label{pg}
\setcounter{equation}{0}
Any positive basis of an oriented real vector space can be 
subjected to
\begin{equation}\label{mod}
\begin{array}{l}
\mathrm{permutations,\hs\hs\ possibly\hs\ combined\hs\ with\hs\ some}\\
\mathrm{sign\nh\ changes,\nh\nnh\ so\nh\ as\nh\ to\nh\ preserve\nh\ the\nh\
orientation.}
\end{array}
\end{equation}
Given an oriented Euclidean $\,4$-space $\,\mathcal{T}\nh$, let $\,G\,$ be
the finite matrix group transforming positive or\-tho\-nor\-mal bases 
$\,v\nh_1\w,\dots,v\nh_4\w$ of $\,\mathcal{T}\hs$ by (\ref{mod}). 
Each element of $\,G\,$ acts on the corresponding pair, analogous to
(\ref{bas}), of ordered bases of 
$\,\Lambda\nnh^\pm\nh$, again by (\ref{mod}), in both
$\,\Lambda\nnh^+$ and $\,\Lambda\nnh^-\nh$, in such a way that
\begin{equation}\label{smp}
\mathrm{both\ bases\ in\ the\ pair\ undergo\ the\ same\
permutation,}
\end{equation}
along with some possible sign changes. The reason is that the permutation 
group of $\,\{1,2,3,4\}\,$ naturally acts 
on $\,2+2\,$ \emph{partitions\/} of $\,\{1,2,3,4\}$, by which we mean
\begin{equation}\label{prt}
\mathrm{the\ three\ sets\ }\,\{\{i,j\},\{k,l\}\}\mathrm{,\ where\
}\,\{i,j,k,l\}=\{1,2,3,4\}\hh,
\end{equation}
and each $\,v\nh_i\w\wedge v\nh_j\w\pm v\nh_k\w\wedge v\nh_l\w$ is associated with a unique
partition $\,\{\{i,j\},\{k,l\}\}$.
\begin{rem}\label{dfeig}In Theorem~\ref{weact}, suppose that $\,\ein\,$ {\it
has four 
distinct eigen\-val\-ues}, 
or \emph{each of\/}  
$\,W^\pm$ \emph{has three distinct eigen\-val\-ues}. Let
$\,v\nh_1\w,\dots,v\nh_4\w$
be a positive or\-tho\-nor\-mal basis of $\,\mathcal{T}\nnh$. If, in the former
case, $\,v\nh_1\w,\dots,v\nh_4\w$ consists of eigen\-vec\-tors of $\,\ein$, or,
in the latter, the corresponding bases of type (\ref{bas}) in
$\,\Lambda\nnh^\pm\nnh$ di\-ag\-o\-nal\-ize $\,W^\pm\nh$, then a basis
$\,u_1\w,\dots,u_4\w$ realizing the respective
conclusions (\ref{ari}-a) -- (\ref{ari}-c) arises from
$\,v\nh_1\w,\dots,v\nh_4\w$ 
via (\ref{mod}). This is clear from Remark~\ref{wknwn}, as distinctness of the
eigen\-values makes the basis/bases in question unique up to (\ref{mod}).

As a consequence of (\ref{smp}), the ordered spectra of $\,W^\pm$ 
realized in (\ref{bas}), for $\,u_1\w,\dots,u_4\w$, and those
in the analog of (\ref{bas}) for $\,v\nh_1\w,\dots,v\nh_4\w$, \emph{differ by 
a permutation which is the same for both signs\/} $\,\pm\hs$.
\end{rem}

\section{Con\-for\-mal changes of product metrics}\label{cc}
\setcounter{equation}{0}
The scalar curvatures and Ein\-stein tensors of two 
con\-for\-mal\-ly related metrics $\,\bg\,$ and $\,g=\bg/\hi^2$ in
dimension $\,n\,$ 
are themselves related by 
\begin{equation}\label{crm}
\begin{array}{rl}
\mathrm{i)}&\sca\,=\,\hi^2\hs\bsc+2(n-1)\hi\hs\bde\hi
-n(n-1)\hs\bg(\bna\nnh\hi,\nnh\bna\nnh\hi)\hh,\\
\mathrm{ii)}&\ein\,=\,\bei\,
+\,\displaystyle{\frac{n-2}\hi}\text{\bf[}\hs\bna\nh d\hi\,
-\,\frac{\bde\hi}n\bg\hh\text{\bf]}\hh,\\
\end{array}
\end{equation}
since -- see, e.g.,\
\cite[p.\,529]{derdzinski-00} -- 
for the Ric\-ci tensors one has
\begin{equation}\label{rct}
\ric\,=\,\brc\,+\,(n-2)\hi^{-\nh1}\hs\bna\nh d\hi\,
+\,[\hi^{-\nh1}\bde\hi\,
-\,(n-1)\hi^{-\nh2}\hs\bg(\bna\nnh\hi,\nnh\bna\nnh\hi)]{\bg}\hh.
\end{equation}
When $\,\bg=\hg+\tg\,$ is a product metric on a product manifold 
$\,\hM\times\tM\nh$,
\begin{equation}\label{rco}
\begin{array}{l}
\mathrm{the\ factor\ distributions\ are\ }\,\ein\hyp\mathrm{orthogonal\
if\ and\ only\ if\ }\,\hi\nh\,\mathrm{\ has\ additive}\hyp\\
\mathrm{ly\ separated\ variables\hskip-2.7pt:\ }\hs\hi
=\hh\ha\hh+\ta\mathrm{,\ where\
}\,\ha\nnh:\nnh\hM\nh\to\bbR\,\mathrm{\ and\ }\,\ta\nnh:\nnh\tM\nh\to\bbR\hh,
\end{array}
\end{equation}
This is clear from (\ref{crm}-ii), as both conditions amount to 
$\,\partial\nh_i\w\partial\nh_a\w\hi=0\,$ in
product coordinates $\,x^i\nh,x^a\nh$. With the notation of
Remark~\ref{denot}, for such
a product metric $\,\bg=\hg+\tg\,$ and $\,g=\bg/\hi^2\nh$, (\ref{crm}) yields
\begin{equation}\label{eee}
\begin{array}{rl}
\mathrm{a)}&\sca\hh=\hh\hi^2(\hsc+\tsc\hs)+2(n-1)\hi(\hY\nnh+\tY)\hh
-n(n-1)(\hQ+\tQ)\hh,\\
\mathrm{b)}&\ein\,=\,\hei\,+\,(n-2)\hi^{-\nh1}\hs\hna\nh d\hi\,
+\,\hat\xi\hs\hg\hskip6pt\mathrm{\
along\ }\,\hM\nh\mathrm{,\ as\ well\ as}\\
\mathrm{c)}&\ein\,=\,\tei\,+\,(n-2)\hi^{-\nh1}\hs\tna\nh d\hi\,
+\,\tilde\xi\hs\tg\hskip6pt\mathrm{\ along\ }\,\tM\nh\mathrm{,\ where}\\
\mathrm{d)}&\hat\xi\,=\,p^{-\nh1}\hsc\hs\,
-\hs\,n^{-\nh1}[\hs\hsc\,+\,\tsc\,+\,(n-2)\hi^{-\nh1}(\hY\nnh+\,\tY)]\hh,\\
\mathrm{e)}&\tilde\xi\,=\,q^{-\nh1}\tsc\hs\,
-\hs\,n^{-\nh1}[\hs\hsc\,+\,\tsc\,+\,(n-2)\hi^{-\nh1}(\hY\nnh+\,\tY)]\hh,\\
\mathrm{f)}&\mathrm{for\ }\,p=\dim\hM\,\mathrm{\ and\ }\,q
=\dim\tM\nh\mathrm{,\ with\ }\,n=p+q.
\end{array}
\end{equation}
We will use the obvious fact that, in (\ref{eee}), if $\,p=q$,
\begin{equation}\label{xix}
n\hh\hat\xi=\hsc-\tsc-(n-2)\hi^{-\nh1}(\hY\nnh\nh+\tY)\hh,\quad
n\hh\tilde\xi=\tsc-\hsc-(n-2)\hi^{-\nh1}(\hY\nnh\nh+\tY)\hh.
\end{equation}
For a con\-for\-mal change $\,g=\bg\hh/\nh\chi^2$ of metrics in dimension
$\,n$, and
any function $\,\hi$, one has -- see, e.g.\
\cite[p.\,528]{derdzinski-00} -- the well-known relations
\begin{equation}\label{ndf}
\begin{array}{rl}
\mathrm{a)}&
\nabla\nh d\hi\,=\,\bna\nh d\hi\,
+\,\chi^{-\nh1}[\hs d\chi\otimes d\hi+d\hi\otimes d\chi
-\bg(\bna\nh\chi,\nnh\bna\nh\hi)\hh\bg]\hh,\\
\mathrm{b)}&\Delta\hn\hi\,=\,\chi^2\hh\bde\hi\,
-\,(n-2)\chi\bg(\bna\nh\chi,\nnh\bna\nh\hi)\hh,\quad
g(\nabla\nnh\hi,\nh\nabla\nnh\hi)\,=\,\chi^2\bg(\bna\nh\hi,\nnh\bna\nh\hi)\hh.
\end{array}
\end{equation}

\section{Con\-for\-mal products and the Weyl tensor}\label{cw}
\setcounter{equation}{0}
Throughout this section we assume that $\,\hM\times\tM\,$ is an 
oriented product manifold, $\,\hi:\hM\times\tM\to\bbR\smallsetminus\{0\}$,
while $\,g=(\hs\hg+\tg\hs)\hs/\hi^2$ on $\,\hM\times\tM\nh$, and either
\begin{equation}\label{dim}
\mathrm{a)}\hskip6pt\dim\hM\hn=\dim\tM\nh=2\hh,\quad\mathrm{or}\quad
\mathrm{b)}\hskip6pt\dim\hM\hn=3\,\mathrm{\ \ and\ }\,\dim\tM\nh=1\hh.
\end{equation}
At any given point $\,x\in\hM\times\tM\,$ we may choose a positive
$\,g$-or\-tho\-nor\-mal basis $\,v\nh_1\w,\dots,v\nh_4\w$ of the tangent
space, so that, depending on which case of (\ref{dim}) occurs, for some even
permutation $\,(i,j,k,l)\,$ of $\,\{1,2,3,4\}$,
\begin{equation}\label{uij}
\begin{array}{l}
\mathrm{a)}\hskip9.8ptv\nh_i\w,v\nh_j\w\,\mathrm{\ are\hs\ tangent\hs\ 
to\hs\ }\,\hM\,\mathrm{\hs\ and\hs\ }\hs\,v\nh_k\w,v\nh_l\w\,\mathrm{\ tangent\hs\
to\hs\  }\,\tM\nh\mathrm{,\hs\mathrm\ \ or}\\
\mathrm{b)}\hskip9.8ptv\nh_l\w\mathrm{\ is\ tangent\ to\ }\,\tM\,
\mathrm{\ and\ } \,v\nh_i\w,v\nh_j\w,v\nh_k\w\mathrm{,\ tangent\ to\
}\,\hM\nh\mathrm{,\nh\ diago}\hyp\\
\mathrm{nalize\nh\ the\nh\ Ein\-stein\nh\ tensor\nh\ of\nh\
}\nh\,\hg\hh\mathrm{\ with\nh\ some\nh\ eigen\-values\nh\
}\,\nnh\kp\nnh_i\w\hn,\hn\kp\nnh_j\w\hn,\hn\kp\nh_k\w.
\end{array}
\end{equation}
In case (\ref{uij}-a), 
as $\,v\nh_i\w\wedge v\nh_j\w\pm v\nh_k\w\wedge v\nh_l\w$ are the two K\"ah\-ler forms 
in Remarks~\ref{kahsf} -- \ref{prsrf},
\begin{equation}\label{wpm}
\mathrm{both\ }\,12\hh W^\pm\nh\mathrm{\ have\ the\ same\ spectrum\ 
}\,(2\hi^2(\hs\hsc+\tsc\hs),\hn-\hi^2(\hs\hsc+\tsc\hs),
\hn-\hi^2(\hs\hsc+\tsc\hs))\hh,
\end{equation}
in $\,\Lambda\nnh^\pm\nnh\hn M\nh$, at $\,x$, 
while, if (\ref{uij}-b) holds, then the only
pos\-si\-bly-non\-zero components of $\,2W\hh$ at $\,x\,$ are those
algebraically related
to $\,2W\nnh_{\nh\!jkjk}\w\nh=2W_{\!ilil}\w\nh=-\hi^2\nh\kp\nnh_i\w$, so that
\begin{equation}\label{wij}
\mathrm{both\ }\,2\hi^{-\nh2}\nh W^\pm\mathrm{\nh\ have\ the\ same\
eigen\-val\-ues\ 
}\,(-\kp\nh_k\w,-\kp\nnh_j\w,-\kp\nnh_i\w)\hh,
\end{equation}
and the ordered spectrum in (\ref{wpm}), as well as that in (\ref{wij}), is 
realized
\begin{equation}\label{spr}
\mathrm{by\nh\ the\nh\ bases\hs\ 
}\,v\nh_i\w\wedge v\nh_j\w\pm v\nh_k\w\wedge v\nh_l\w,\,\,
v\nh_i\w\wedge v\nh_k\w\pm v\nh_l\w\wedge v\nh_j\w,\,\,
v\nh_i\w\wedge v\nh_l\w\pm v\nh_j\w\wedge v\nh_k\w.
\end{equation}
Suppose now that, at the given point $\,x=(y,z)\in\hM\times\tM\nh$, one has
\begin{equation}\label{eth}
\mathrm{either\ (\ref{dim}}\hyp\mathrm{a),\ or\ (\ref{dim}}\hyp\mathrm{b)\ 
and\ }\,\kp\nnh_i\w=\kp\nnh_j\w\hh.    
\end{equation}
In other words, we want to lump together (\ref{dim}-a) with a sub\-case of 
(\ref{dim}-b) having a repeated eigen\-valu\-e in (\ref{uij}-b), and we
restrict the choice of permutations $\,(i,j,k,l)$ by requiring that
$\,\kp\nnh_i\w=\kp\nnh_j\w$. From (\ref{wpm}) or, respectively, (\ref{wij})
we now get
\begin{equation}\label{thc}
\mathrm{the\ condition\ (\ref{bts}}\hyp\mathrm{a)\ with\ 
}\,6\hh\sigma=\hi^2(\hs\hsc+\tsc\hs)\mathrm{,\ or\ }\,2\sigma
=-\hi^2\nh\kp\nh_k\w\hh,
\end{equation}
which makes $\,\sigma\,$ the simple-eigen\-val\-ue function of
$\,W^\pm\nnh$.
\begin{lem}\label{smloc}
If\/ $\,\sigma\ne0\,$ in\/ {\rm(\ref{eth})} -- {\rm(\ref{thc})} and\/ 
$\,\zeta^\pm$ are any length\/ $\,\sqrt{2\,}\,$ eigen\-bi\-vec\-tors of\/
$\,W^\pm\nnh$ at\/ $\,x=(y,z)\in\hM\times\tM\,$ for the simple
eigen\-val\-ue\/ $\,\sigma$, then\/ $\,\zeta^+\nnh\nnh\pm\hs\zeta^-$ both
have rank two, and their images coincide, as an unordered pair, with
\begin{enumerate}
\item[{\rm(i)}]the direct summands\/ 
$\,T\hskip-3pt_y\w\hn\hM\,$ and\/ $\,T\hskip-3pt_z\w\hn\tM\,$ of\/ 
$\,T\hskip-3pt_x\w[\hn\hM\nh\times\nh\tM]$, in the case\/ {\rm(\ref{dim}-a)},
\item[{\rm(ii)}]the eigen\-space of the Ein\-stein tensor of\/ $\,\hg\,$ at\/
$\,y\,$ for the eigen\-val\-ue\/ 
$\,\kp\nnh_i\w$, and its orthogonal complement in\/
$\,T\hskip-3pt_x\w[\hn\hM\nh\times\nh\tM]$, for\/ {\rm(\ref{dim}-b)} with\/
$\,\kp\nnh_i\w=\kp\nnh_j\w$.
\end{enumerate}
\end{lem}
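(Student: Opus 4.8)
The plan is to work entirely at the fixed point $\,x=(y,z)$, reducing the statement to a linear-algebra computation on the fibres $\,\Lambda\nnh^\pm\nnh$ using the explicit diagonalizations already provided. First I would invoke the condition (\ref{thc}), which tells us that $\,\sigma\,$ is the simple eigen\-val\-ue of both $\,W^\pm\nnh$, with the common unordered spectrum $\,\{\sigma,-\sigma/2,-\sigma/2\}\,$ as recorded in (\ref{bts}-a). Since $\,\sigma\ne0$, the simple eigen\-val\-ue is genuinely distinct from the repeated one, so the $\,\sigma$-eigen\-space of each $\,W^\pm$ is exactly one-di\-men\-sion\-al, and $\,\zeta^\pm$ are determined up to sign once we fix length $\,\sqrt{2\,}$. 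The key observation to exploit is that, by (\ref{spr}), the \emph{first} vectors $\,v\nh_i\w\wedge v\nh_j\w\pm v\nh_k\w\wedge v\nh_l\w$ of the standard bases (\ref{bas}) realize precisely the simple eigen\-val\-ue $\,\sigma\,$ in both cases; hence I may take $\,\zeta^\pm=v\nh_i\w\wedge v\nh_j\w\pm v\nh_k\w\wedge v\nh_l\w$ after adjusting signs.

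**The rank-two and image computation.** With $\,\zeta^+\nnh=v\nh_i\w\wedge v\nh_j\w+v\nh_k\w\wedge v\nh_l\w$ and $\,\zeta^-\nnh=v\nh_i\w\wedge v\nh_j\w-v\nh_k\w\wedge v\nh_l\w$, the two combinations simplify immediately:
\begin{equation}\label{zpm}
\zeta^+\nnh+\zeta^-\nnh=2\hh v\nh_i\w\wedge v\nh_j\w\hh,\qquad
\zeta^+\nnh-\zeta^-\nnh=2\hh v\nh_k\w\wedge v\nh_l\w\hh.
\end{equation}
Each of these is a \emph{decomposable} bivector, hence of rank two, and the image (as a skew en\-do\-mor\-phism of $\,T\hskip-3pt_x\w[\hn\hM\nh\times\nh\tM]$) of $\,v\nh_i\w\wedge v\nh_j\w$ is $\,\mathrm{span}\hn(v\nh_i\w,v\nh_j\w)$, while that of $\,v\nh_k\w\wedge v\nh_l\w$ is $\,\mathrm{span}\hn(v\nh_k\w,v\nh_l\w)$. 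This gives the unordered pair of two-planes, and it only remains to identify these two planes geometrically in each of the two cases.

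**Identifying the planes.** In case (\ref{dim}-a) the basis was chosen by (\ref{uij}-a) so that $\,v\nh_i\w,v\nh_j\w$ are tangent to $\,\hM\,$ and $\,v\nh_k\w,v\nh_l\w$ tangent to $\,\tM\nh$; thus the two images are exactly the summands $\,T\hskip-3pt_y\w\hn\hM\,$ and $\,T\hskip-3pt_z\w\hn\tM$, proving (i). In case (\ref{dim}-b) with $\,\kp\nnh_i\w=\kp\nnh_j\w$, the permutation was constrained (see (\ref{eth})) precisely so that the repeated Ein\-stein eigen\-value sits on the indices $\,i,j$; by (\ref{uij}-b) the vectors $\,v\nh_i\w,v\nh_j\w,v\nh_k\w$ are tangent to $\,\hM\,$ and diag\-on\-al\-ize the Ein\-stein tensor of $\,\hg$, while $\,v\nh_l\w$ spans $\,T\hskip-3pt_z\w\hn\tM$. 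Therefore $\,\mathrm{span}\hn(v\nh_i\w,v\nh_j\w)$ is the $\,\kp\nnh_i\w$-eigen\-space of the Ein\-stein tensor of $\,\hg\,$ at $\,y$, and $\,\mathrm{span}\hn(v\nh_k\w,v\nh_l\w)=\mathrm{span}\hn(v\nh_k\w)\oplus T\hskip-3pt_z\w\hn\tM$ is its orthogonal complement, yielding (ii).

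**Anticipated obstacle.** The computation itself is routine once the right basis is in hand; the one point requiring genuine care is the \emph{matching of indices}, i.e.\ verifying that the simple eigen\-value $\,\sigma\,$ is carried by the \emph{same} first basis vector in both $\,\Lambda\nnh^+$ and $\,\Lambda\nnh^-$, rather than by different ones. This is exactly what (\ref{smp}) guarantees — both bases undergo the same permutation — so that the alignment in (\ref{spr}) is consistent across the two signs. I would therefore state explicitly that, by (\ref{smp}) together with the distinctness of $\,\sigma\,$ from $\,-\sigma/2$, no independent reshuffling between $\,\Lambda\nnh^+$ and $\,\Lambda\nnh^-$ is possible, so the decomposition (\ref{zpm}) is legitimate. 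With that subtlety addressed, the lemma follows.
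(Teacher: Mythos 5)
Your proof is correct and follows essentially the same route as the paper's: there, too, simplicity of $\,\sigma\,$ makes each $\,\zeta^\pm$ unique up to sign and hence equal to $\,v_i\wedge v_j\pm v_k\wedge v_l\,$ from (\ref{spr}), after which $\,\zeta^+\nnh\pm\hs\zeta^-$ become $\,2\hh v_i\wedge v_j\,$ and $\,2\hh v_k\wedge v_l\,$ and the identification of the planes follows from (\ref{uij}). Your appeal to (\ref{smp}) in the final paragraph is unnecessary, since (\ref{spr}) already asserts that the same ordered spectrum is realized by the corresponding bases for both signs, so the alignment you worry about is built in.
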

\begin{proof}Up to a sign: each of $\,\zeta^\pm$ is unique, and hence equals 
$\,v\nh_i\w\wedge v\nh_j\w\pm v\nh_k\w\wedge v\nh_l\w$ in (\ref{spr}). Now our
claim follows from (\ref{uij}).
\end{proof}
We will refer to the two sub\-spaces of
$\,T\hskip-3pt_x\w[\hn\hM\nh\times\nh\tM]\,$ forming the unordered pair
in (i) or (ii) above as the \emph{summand planes\/} at $\,x$.

In Lemma~\ref{smloc}, if $\,\zeta^\pm\nnh
=u_i\w\wedge u_j\w\pm u_k\w\wedge u_l\w$, or
$\,\zeta^+\nnh=u_i\w\wedge u_j\w+u_k\w\wedge u_l\w$ and 
$\,\zeta^-\nnh=u_i\w\wedge u_k\w-u_l\w\wedge u_j\w$, for 
a positive $\,g$-or\-tho\-nor\-mal basis $\,u_1\w,\dots,u_4\w$ of
$\,T\hskip-3pt_x\w[\hn\hM\nh\times\nh\tM]$ 
and an even permutation $\,\hs(i,j,k,l)\,$
of $\,\hh\{1,2,3,4\}$, then
\begin{equation}\label{spn}
\begin{array}{l}
\mathrm{one\ summand\ plane\ is\ spanned\ by\
}\,\,u_i\w,u_j\w\mathrm{,\hs\ the\ other\ by\ 
}\,\,u_k\w,u_l\w\,\mathrm{\hs\ or,}\\
\mathrm{respectively,\nh\ one\ by\ 
}\,u_i\w\hn-\hh u_l\w,u_j\w\hn+\hh u_k\w\mathrm{\nh,\ the\ other\
by\ }\,u_i\w\hn+\hh u_l\w,u_j\w\hn-\hh u_k\w,
\end{array}
\end{equation}
as one sees evaluating 
$\,\zeta^+\nnh\nnh\pm\hs\zeta^-\nh$, with
$\,u_i\w\wedge u_j\w+u_k\w\wedge u_l\w
\pm(u_i\w\wedge u_k\w-u_l\w\wedge u_j\w)
=(u_i\w\mp u_l\w)\wedge(u_j\w\pm u_k\w)$.

For the remainder of this section we also \emph{assume that\/ $\,g\,$ is a
weakly Ein\-stein metric, and\/ $\,\sigma\ne0\,$ in\/}
(\ref{eth}) -- (\ref{thc}).

If the Ein\-stein tensor of $\,g\,$ at $\,x\,$ is nonzero,
Remark~\ref{cvrsl} yields the existence of a
positive $\,g$-or\-tho\-nor\-mal basis $\,u_1\w,\dots,u_4\w$
of $\,T\hskip-3pt_x\w[\hn\hM\nh\times\nh\tM]\,$ satisfying 
(\ref{ari}-c) or (\ref{ari}-b), with one of the nine options in (\ref{chb}). 
Now, due to (\ref{spn}),
each of the following nine ordered orthogonal bases of
$\,T\hskip-3pt_x\w[\hh\hM\times\nh\tM]$, 
corresponding to the nine cases of (\ref{chb}), consists of a basis of one 
summand plane followed by a basis of the other: 
\begin{equation}\label{nbs}
\begin{array}{rl}
\mathrm{i)}&(u_1\w,u_2\w,u_3\w,u_4\w)\hh,\quad\mathrm{ii)}\hskip6pt
(u_1\w\nh-u_4\w,u_2\w\nh+u_3\w,u_1\w\nh+u_4\w,u_2\w\nh-u_3\w)\hh,\\
\mathrm{iii)}&(u_1\w\nh+u_3\w,u_2\w\nh+u_4\w,u_1\w\nh-u_3\w,u_2\w\nh-u_4\w)\hh,\\
\mathrm{iv)}&(u_1\w\nh+u_4\w,u_2\w\nh+u_3\w,u_1\w\nh-u_4\w,u_2\w\nh-u_3\w)\hh,\\
\mathrm{v)}&(u_1\w,u_3\w,u_2\w,u_4\w)\hh,\quad\mathrm{vi)}\hskip6pt
(u_1\w\nh-u_2\w,u_3\w\nh+u_4\w,u_1\w\nh+u_2\w,u_3\w\nh-u_4\w)\hh,\\
\mathrm{vii)}&(u_1\w\nh-u_3\w,u_2\w\nh+u_4\w,u_1\w\nh+u_3\w,u_2\w\nh-u_4\w)\hh,\\
\mathrm{viii)}&(u_1\w\nh+u_2\w,u_3\w\nh+u_4\w,u_1\w\nh-u_2\w,u_3\w\nh-u_4\w)\hh,\quad\mathrm{ix)}\hskip6pt
(u_1\w,u_4\w,u_2\w,u_3\w)\hh.
\end{array}
\end{equation}
The third, fourth and eighth bases arise here from the following analog
of the second line in (\ref{spn}): if $\,\zeta^+\nnh
=u_i\w\wedge u_j\w+u_k\w\wedge u_l\w$ and 
$\,\zeta^-\nnh=u_i\w\wedge u_l\w-u_j\w\wedge u_k\w$, then 
one summand plane 
is spanned by $\,u_i\w\hn+\hh u_k\w,u_j\w\hn+\hh u_l\w$, and the other by 
$\,u_i\w\hn-\hh u_k\w,u_j\w\hn-\hh u_l\w$, which is obvious since
$\,u_i\w\wedge u_j\w+u_k\w\wedge u_l\w
\pm(u_i\w\wedge u_l\w-u_j\w\wedge u_k\w)
=(u_i\w\pm u_k\w)\wedge(u_j\w\pm u_l\w)$.
\begin{lem}\label{othzr}
In the cases\/ {\rm(\ref{nbs}-ii)}, {\rm(\ref{nbs}-iii)}, {\rm(\ref{nbs}-iv)}
and\/ {\rm(\ref{nbs}-vii)}, 
the restrictions of the Ein\-stein tensor\/ $\,\ein\,$ of\/ $\,g\,$ to\/ 
both summand planes equal zero, while in the remaining five cases
the summand planes 
are\/ $\,\ein$-or\-thog\-o\-nal. 
When\/ {\rm(\ref{ari}-c)} and\/ \hbox{{\rm(\ref{nbs}-i)}} hold, 
the summand planes are the eigen\-spaces of\/ $\,\ein$.
\end{lem}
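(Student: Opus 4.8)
The plan is to reduce the whole statement to a single structural fact: by Theorem~\ref{weact} (invoked through Remark~\ref{cvrsl}) the basis $\,u_1\w,\dots,u_4\w$ consists of eigen\-vec\-tors of $\,\ein$, so in every one of the nine cases $\,\ein\,$ is \emph{diagonal} at $\,x$, i.e.\ $\,\ein(u_a\w,u_b\w)=0\,$ for $\,a\ne b$, with diagonal entries $\,\ein_{aa}\w=\ein(u_a\w,u_a\w)\,$ equal to the eigen\-val\-ues $\,(-\lambda,-\mu,\mu,\lambda)\,$ of (\ref{ari}-b) or $\,(-\lambda,-\lambda,\lambda,\lambda)\,$ of (\ref{ari}-c). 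I would first isolate the two families of relations doing all the work: both (\ref{ari}-b) and (\ref{ari}-c) give the orientation-free identities $\,\ein_{11}\w+\ein_{44}\w=\ein_{22}\w+\ein_{33}\w=0$, while the genuine case (\ref{ari}-c) gives in addition $\,\ein_{11}\w=\ein_{22}\w$ and $\,\ein_{33}\w=\ein_{44}\w$. Since the summand-plane bases are exactly those in (\ref{nbs}), diagonality reduces every evaluation to $\,\ein(u_a\w\pm u_b\w,u_a\w\pm u_b\w)=\ein_{aa}\w+\ein_{bb}\w$, $\,\ein(u_a\w+u_b\w,u_a\w-u_b\w)=\ein_{aa}\w-\ein_{bb}\w$, and $\,\ein(u_a\w\pm u_b\w,u_c\w\pm u_d\w)=0\,$ whenever $\,\{a,b\}\cap\{c,d\}=\emptyset$.

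For the four ``null'' cases (\ref{nbs}-ii), (\ref{nbs}-iii), (\ref{nbs}-iv), (\ref{nbs}-vii) I would observe that each summand plane is spanned by two vectors $\,u_a\w\pm u_b\w$, $\,u_c\w\pm u_d\w$ with $\,\{a,b\}\cap\{c,d\}=\emptyset$, so the off-diagonal entry of $\,\ein\,$ on this basis is automatically $\,0\,$ and the two diagonal entries are $\,\ein_{aa}\w+\ein_{bb}\w$ and $\,\ein_{cc}\w+\ein_{dd}\w$; thus $\,\ein\,$ restricts to zero on each plane once the relevant sums vanish. In (\ref{nbs}-ii) and (\ref{nbs}-iv) the pairs are $\,\{1,4\}\,$ and $\,\{2,3\}$, so the orientation-free relation already suffices; in (\ref{nbs}-iii) and (\ref{nbs}-vii) the pairs are $\,\{1,3\}\,$ and $\,\{2,4\}$, so one needs $\,\ein_{11}\w+\ein_{33}\w=\ein_{22}\w+\ein_{44}\w=0$, which follows from the (\ref{ari}-c) spectrum $\,(-\lambda,-\lambda,\lambda,\lambda)$ -- available since these two cases carry $\,\xi=\sca/4\ne0$, forcing (\ref{ari}-c).

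For the five ``orthogonal'' cases I would split once more. In (\ref{nbs}-i), (\ref{nbs}-v) and (\ref{nbs}-ix) the two summand planes are spanned by disjoint pairs among $\,u_1\w,\dots,u_4\w$ themselves, so every cross-term is an off-diagonal entry of the diagonal tensor $\,\ein$ and hence $\,0$, giving $\,\ein$-orthogonality at once. In (\ref{nbs}-vi) and (\ref{nbs}-viii) each plane is spanned by a $\,\pm$-combination of $\,u_1\w,u_2\w$ and a $\,\pm$-combination of $\,u_3\w,u_4\w$, so the only cross-terms surviving diagonality are, up to sign, $\,\ein_{11}\w-\ein_{22}\w$ and $\,\ein_{33}\w-\ein_{44}\w$, both vanishing in the genuine (\ref{ari}-c) forced here by $\,\xi=\sca/8\ne0$. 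Finally, for the last assertion, in (\ref{ari}-c) with (\ref{nbs}-i) the planes are $\,\mathrm{span}\,(u_1\w,u_2\w)\,$ and $\,\mathrm{span}\,(u_3\w,u_4\w)$, on which $\,\ein\,$ acts as $\,-\lambda\cdot\mathrm{id}\,$ and $\,\lambda\cdot\mathrm{id}$; as we work at a point with $\,\ein\ne0$, we have $\,\lambda\ne0$, so $\,\ein\,$ has precisely the two distinct eigen\-val\-ues $\,\pm\lambda$, each doubled, and the two planes are exactly its eigen\-spaces.

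The computations themselves are routine once diagonality is recorded; the one step demanding genuine care -- and the one I expect to be the main obstacle -- is the ambiguity in (\ref{nbs}-ii), (\ref{nbs}-iv) and (\ref{nbs}-ix), where $\,\xi=0\,$ leaves open whether the spectrum of $\,\ein\,$ follows the (\ref{ari}-b) pattern $\,(-\lambda,-\mu,\mu,\lambda)\,$ or the (\ref{ari}-c) pattern $\,(-\lambda,-\lambda,\lambda,\lambda)$. I would dispel it by the very structure of the case split: the three genuinely ambiguous cases are exactly those whose plane bases use only the index pairs $\,\{1,4\},\{2,3\}\,$ (cases ii, iv) or disjoint coordinate pairs (case ix), so that only the orientation-free relation $\,\ein_{11}\w+\ein_{44}\w=\ein_{22}\w+\ein_{33}\w=0\,$ is ever invoked; conversely, every case that actually uses $\,\ein_{11}\w=\ein_{22}\w$ or $\,\ein_{33}\w=\ein_{44}\w$ (namely iii, vi, vii, viii) has $\,\xi\ne0$ and is therefore unambiguously (\ref{ari}-c). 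Here one also uses that $\,\sigma\ne0\,$ forces $\,\sca\ne0\,$ via the explicit spectra of (\ref{chb}), which is what makes $\,\xi=\sca/4\,$ or $\,\sca/8\,$ nonzero in those four cases.
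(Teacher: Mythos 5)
Your proof is correct and takes essentially the same route as the paper's: both rest on the diagonalization of $\,\ein\,$ by $\,u_1\w,\dots,u_4\w$ with the spectra of (\ref{ari}-b)/(\ref{ari}-c), together with the observation that $\,\sigma\ne0\,$ forces $\,\sca\ne0\,$ in (\ref{chb}) and hence $\,\xi\ne0$, so that $\,\mu=\lambda\,$ holds in precisely the cases where it is needed. The paper packages the same linear algebra as the statement that the endomorphism corresponding to $\,\ein\,$ sends each summand plane into the other in cases (\ref{nbs}-ii), (\ref{nbs}-iii), (\ref{nbs}-iv), (\ref{nbs}-vii) and preserves both planes in the remaining five, while your bilinear-form evaluations make explicit -- a bit more carefully than the paper, which attaches the $\,\mu=\lambda\,$ justification only to (\ref{nbs}-vi) and (\ref{nbs}-viii) -- that the $\,\xi=0\,$ cases (\ref{nbs}-ii), (\ref{nbs}-iv), (\ref{nbs}-ix) never use more than the sign-symmetric relations valid in both (\ref{ari}-b) and (\ref{ari}-c).
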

\begin{proof}By (\ref{ari}), the en\-do\-mor\-phism $\,E\,$ of
$\,T\hskip-3pt_x\w[\hh\hM\times\nh\tM]\,$ corresponding via $\,g\,$ to
$\,\ein\,$ assigns 
$\,-\nnh\lambda u_1\w,-\nh\mu u_2\w,\mu u_3\w,\lambda u_4\w$ to
$\,u_1\w,u_2\w,u_3\w,u_4\w$, with 
$\,\mu\,$ standing for $\,\lambda\,$ in \hbox{(\ref{ari}-c).} Consequently, 
in the former four cases, $\,E\,$ sends each summand plane into the other. 
In the latter five, $\,E\,$ leaves both summand planes
invariant. For (\ref{nbs}-i), (\ref{nbs}-v) and (\ref{nbs}-ix), this last
claim is immediate, For (\ref{nbs}-vi) and (\ref{nbs}-viii), it follows since
$\,\mu=\lambda$. Namely, 
our assumption that $\,\sigma\ne0\,$ in (\ref{eth}) -- (\ref{thc}) gives
$\,\sca\ne0\,$ in (\ref{chb}), and hence $\,\xi\ne0$, leading to 
(\ref{ari}-c) rather than (\ref{ari}-b).
\end{proof}
\begin{rem}\label{eight}In the eight cases of (\ref{nbs}) other than
(\ref{nbs}-i), the restrictions of $\,\ein\,$ to the summand planes 
are both
$\,g$-trace\-less. For the fifth, sixth, eighth and ninth cases this follows
since, by (\ref{ari}), the corresponding orthogonal basis in
(\ref{nbs}) consists of eigen\-vec\-tors of $\,\ein\,$ in
$\,T\hskip-3pt_x\w[\hh\hM\times\nh\tM]\,$ for the eigen\-values 
$\,(-\nnh\lambda,\lambda,-\nh\mu,\mu)\,$ (the ninth case) or
$\,(-\nnh\lambda,\lambda,-\nnh\lambda,\lambda)\,$ (the other three). 
For the remaining four (second, third, fourth, seventh) our claim is
obvious from Lemma~\ref{othzr}.
\end{rem}

 
\section{Examples}\label{ex}
\setcounter{equation}{0}
The following two theorems provide constructions of weakly Ein\-stein
con\-for\-mal products. Their proofs, and explicit realizations of the
geometric structures used in the constructions, are given in
Sect.\,\ref{pt} -- \ref{gc}.
\begin{thm}\label{exatt}
Given Riemannian surfaces\/ $\,(\varSigma,g)\,$ and\, $\,(\varPi\nh,h)\,$ with
Gauss\-i\-an curvatures\/ $\,K$ and\, $\,c\,$ such that\/ $\,c\,$ is 
constant, and a function\/ 
$\,\hi:\varSigma\to\bbR\smallsetminus\{0\}$, for\/ $\,\Delta\,$ and\/
$\,\nabla$ referring to the metric\/ $\,g$, let on\/ $\,\varSigma\,$ either 
\begin{itemize}
\item[(i)] $\Delta \hi^{-1}=0\,$ and\/
$\,2g(\nabla\nnh\hi,\nnh\nabla\nnh\hi)=(c-K)\hi^2\nh$, or
\item[(ii)] $2\nabla\nh d\hi=(\Delta\hn\hi)g\,$ and\/
$\,(K\nnh\hn+\hs c)\hi^2\nh+\hi\Delta\hn\hi
=2g(\nabla\nnh\hi,\nnh\nabla\nnh\hi)$.
\end{itemize}
Then the metric\/ $\,(g+h)/\hi^2$ on\/ $\,\varSigma\times\varPi\hs$
is weakly Ein\-stein. Also, 
$\,(g+h)/\hi^2$ is 
\begin{itemize}
\item [(iii)] Ein\-stein in the case\/ {\rm(i)} if and only if\/ 
$\,2\nabla\nh d\hi=(\Delta\hn\hi)g$,
\item [(iv)] Ein\-stein in the case\/ {\rm(ii)} if and only if\/ 
$\,g(\nabla\nnh\hi,\nnh\nabla\nnh\hi)=c\hh\hi^2\nh$,
\item[(v)] con\-for\-mal\-ly flat if and only if\/ $\,K=-c$. 
\end{itemize}
More precisely, the equalities in\/ {\rm(iii)} -- {\rm(v)} characterize
points at which the Ein\-stein or, respectively, Weyl tensor of the
metric\/ $\,(g+h)/\hi^2$ vanishes. Wherever both these tensors are nonzero,
the lo\-cal-homo\-thety invariant of Remark\/~{\rm\ref{htinv}} is given by
\begin{equation}\label{lhi}
\begin{array}{l}
\bsc\hh,\,\,(-\nnh\lambda,0,0,\lambda)\hh,\,\,
(-\bsc/\nh12,-\bsc/\nh12,\,\bsc/6)\hh,\,\,
(-\bsc/\nh12,-\bsc/\nh12,\,\bsc/6)\,\mathrm{\ in\ the\ case\ (i),}\\
\bsc\hh,\,(-\nnh\lambda,-\nnh\lambda,\lambda,\lambda)\hh,\,
(-\bsc/\nh12,\,\bsc/\nh24,\,\bsc/\nh24)\hh,\,
(-\bsc/\nh12,\,\bsc/\nh24,\,\bsc/\nh24)\,\mathrm{\ in\ the\ case\ (ii),}\\
\end{array}
\end{equation}
with the scalar curvature\/ $\,\bsc\ne0\,$ of the metric\/ 
$\,(g+h)/\hi^2$ and a parameter\/ $\,\lambda\ne0$.
\end{thm}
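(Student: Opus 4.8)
The plan is to verify the weakly Ein\-stein condition through the criterion~(\ref{iff}), treating $\,(g+h)/\hi^2$ as the $\,2+2\,$ con\-for\-mal product $\,\bg/\hi^2$ with $\,\bg=\hg+\tg$, where $\,\hg=g\,$ on $\,\hM=\varSigma\,$ and $\,\tg=h\,$ on $\,\tM=\varPi$, so that $\,\hsc=2K\,$ and $\,\tsc=2c$. Since $\,\hi\,$ is con\-stant along $\,\varPi$, every fibre operation drops out: $\,\tY=\tQ=0$, $\,\tna\nh d\hi=0$, while $\,\hY=\Delta\hi\,$ and $\,\hQ=g(\nabla\hi,\nabla\hi)\,$ for the surface operators $\,\Delta,\nabla\,$ of $\,g$; both factors being surfaces, $\,\hei=\tei=0$. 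The first step is to read off from (\ref{eee}) with $\,p=q=2$, together with (\ref{xix}) and the additive separation in (\ref{rco}), that $\,\ein\,$ is block-di\-ag\-o\-nal, equal to $\,2\hi^{-1}\nabla\nh d\hi+\hat\xi\hs g\,$ along $\,\varSigma\,$ and to $\,\tilde\xi\hs h\,$ along $\,\varPi$, where $\,4\hat\xi=2K-2c-2\hi^{-1}\Delta\hi\,$ and $\,4\tilde\xi=2c-2K-2\hi^{-1}\Delta\hi$, and that $\,\sca=2\hi^2(K+c)+6\hi\Delta\hi-12\hs g(\nabla\hi,\nabla\hi)$.

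Next I would feed in the two hypotheses. In case~(i), identity~(\ref{dqe}-b) converts $\,\Delta\hi^{-1}=0\,$ into $\,\hi\Delta\hi=2g(\nabla\hi,\nabla\hi)$, which with $\,2g(\nabla\hi,\nabla\hi)=(c-K)\hi^2\,$ gives $\,\Delta\hi=(c-K)\hi$; substituting, $\,\tilde\xi=0$, so $\,\ein=\hi^{-1}[\hs2\nabla\nh d\hi-(\Delta\hi)g\hs]\,$ along $\,\varSigma\,$ and $\,\ein=0\,$ along $\,\varPi$, while $\,\sca=2\hi^2(K+c)$. In case~(ii), $\,2\nabla\nh d\hi=(\Delta\hi)g\,$ renders the $\,\varSigma$-block pure trace, so $\,\ein=\rho\hs g\,$ along $\,\varSigma\,$ and $\,\ein=-\rho\hs h\,$ along $\,\varPi\,$ with $\,2\rho=\hi^{-1}\Delta\hi+K-c$, and the remaining relation forces $\,\sca=-4\hi^2(K+c)$. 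In either case let $\,\lambda\,$ fix the eigen\-val\-ues of $\,\ein$, viewed as a $\,\bg/\hi^2$-self\-ad\-joint endo\-mor\-phism; they emerge as $\,(-\lambda,0,0,\lambda)\,$ in case~(i) and $\,(-\lambda,-\lambda,\lambda,\lambda)\,$ in case~(ii).

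The decisive step is the Weyl action in~(\ref{iff}). By~(\ref{wpm}) both $\,W^\pm\,$ share the simple eigen\-val\-ue $\,\sigma=\hi^2(\hsc+\tsc)/6=\hi^2(K+c)/3\,$ on the K\"ah\-ler bi\-vec\-tors $\,u_1\wedge u_2\pm u_3\wedge u_4\,$ and the double eigen\-val\-ue $\,-\sigma/2\,$ on the remaining members of~(\ref{spr}), for a $\,\bg/\hi^2$-or\-tho\-nor\-mal frame with $\,u_1,u_2\,$ tangent to $\,\varSigma\,$ and $\,u_3,u_4\,$ tangent to $\,\varPi$. Decoding these eigen\-bi\-vec\-tors supplies the only curvature components needed, $\,W_{1212}=W_{3434}=\sigma\,$ and $\,W_{1313}=W_{1414}=W_{2323}=W_{2424}=-\sigma/2$, all mixed ones vanishing by mutual orthogonality of the eigen\-bi\-vec\-tors. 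Evaluating $\,[W\ein]_{ij}=W_{ipjq}\ein^{pq}\,$ on the di\-ag\-o\-nal $\,\ein\,$ then gives $\,W\ein=-\sigma\ein\,$ in case~(i) and $\,W\ein=2\sigma\ein\,$ in case~(ii); since $\,\sca=6\sigma\,$ and $\,\sca=-12\sigma\,$ respectively, each yields $\,6\hh W\ein=-\sca\hh\ein$, so $\,(g+h)/\hi^2\,$ is weakly Ein\-stein. I expect this Weyl-action computation to be the main obstacle, as it is the one place needing the full curvature operator rather than the variable-sep\-a\-rat\-ed con\-for\-mal-change formulas alone.

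Finally, (iii)--(v) and~(\ref{lhi}) follow by inspection. In case~(i), $\,\ein=\hi^{-1}[\hs2\nabla\nh d\hi-(\Delta\hi)g\hs]\,$ (and $\,\ein=0\,$ along $\,\varPi$) vanishes at a point exactly when $\,2\nabla\nh d\hi=(\Delta\hi)g\,$ there, giving~(iii); in case~(ii), $\,\ein=0\,$ means $\,\rho=0$, i.e.\ $\,\Delta\hi=(c-K)\hi$, which through the case-(ii) relation is equivalent to $\,g(\nabla\hi,\nabla\hi)=c\hi^2$, giving~(iv). Con\-for\-mal flatness is $\,W=0$, i.e.\ $\,\sigma=0$, i.e.\ $\,K=-c$, proving~(v) and, being pointwise, locating the zero set of $\,W$. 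The coincident $\,W^\pm$-triples are $\,(\sigma,-\sigma/2,-\sigma/2)$, re\-expressed via $\,\sigma=\bsc/6\,$ in case~(i) and $\,\sigma=-\bsc/\nh12\,$ in case~(ii) for the scalar curvature $\,\bsc=\sca$; combined with the Einstein spectra and $\,\lambda\ne0\ne\bsc\,$ where both tensors are nonzero, and ordered by Theorem~\ref{weact} up to the permutations of Sect.\,\ref{pg}, this reproduces the e\-lev\-en-sca\-lar string of Remark~\ref{htinv} as stated in~(\ref{lhi}).
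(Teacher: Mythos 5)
Your proposal is correct, and its first half coincides with the paper's own proof in Sect.\,\ref{pt}: the same use of (\ref{eee}), (\ref{xix}) and (\ref{rco}) with $\,(n,p,q)=(4,2,2)\,$ and $\,\tY\nnh=\tQ=0$, of (\ref{dqe}-b) to convert $\,\Delta\hi^{-1}\nh=0\,$ into $\,\hi\Delta\hi=2g(\nabla\hi,\nabla\hi)$, and of (\ref{wpm}) for the $\,W^\pm$ spectra; your block formulas for $\,\ein\,$ and the values $\,\sca=2\hi^2(K+c)\,$ (case (i)) and $\,\sca=-4\hi^2(K+c)\,$ (case (ii)) reproduce the paper's items (a)\hs--\hs(g). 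Where you genuinely diverge is the endgame: the paper never computes $\,W\nnh\nh\ein$; it establishes the canonical spectral data -- (\ref{ari}-b) with $\,c_2\w=c_3\w=c_4\w=0\,$ and $\,\mu=0\,$ in case (i), (\ref{ari}-c) with $\,c_2\w=c_3\w=c_4\w=0\,$ and $\,\xi=\sca/8\,$ in case (ii) -- and then invokes Remark~\ref{cnvrs}, i.e., the Sing\-er\hh-Thorpe-based converse of \cite{derdzinski-park-shin}, to conclude weak Einsteinness, whereas you verify the defining identity (\ref{iff}) directly, decoding $\,W_{1212}\w=W_{3434}\w=\sigma\,$ and $\,W_{1313}\w=W_{1414}\w=W_{2323}\w=W_{2424}\w=-\sigma/2\,$ from the eigen\-bi\-vec\-tors and contracting against the diagonal $\,\ein$. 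That contraction is sound: since both triples in (\ref{spr}) carry identical ordered spectra, one gets $\,W(u_1\w\wedge u_2\w)=\sigma\,u_1\w\wedge u_2\w$ exactly (add and subtract the two K\"ah\-ler eigen\-equations), the mixed components vanish, and $\,W\nnh\nh\ein=-\sigma\ein\,$ or $\,2\sigma\ein\,$ together with $\,\sca=6\sigma\,$ or $\,-12\sigma\,$ yields $\,6\hh W\nnh\nh\ein=-\sca\hh\ein\,$ in both cases. Your route buys self-containedness -- no appeal to the external algebraic classification -- at the cost that (\ref{lhi}) no longer drops out of the canonical forms: you must reassemble the e\-lev\-en-sca\-lar string afterwards, and your appeal to the permutation freedom built into Remark~\ref{htinv} (with (\ref{smp}) automatically satisfied, the two triples being equal) is legitimate. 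Your handling of (iii)\hs--\hs(v) is a mild streamlining as well: you read the pointwise vanishing of $\,\ein\,$ off the explicit blocks, where the paper runs the trace-difference argument (\ref{rhs}), and for (v) both arguments reduce to $\,K+c=0$, the paper via Remark~\ref{prsrf} and you via $\,\sigma=0$.
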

The next theorem employs our usual notations: $\,\ric,\ein,\sca\,$ for the
Ric\-ci/Ein\-stein tensors and scalar curvature of the metric $\,g$,
and $\,\nabla\hs$ for its Le\-vi-Ci\-vi\-ta connection as well as 
the $\,g$-grad\-i\-ent, while the barred symbols $\,\bei,\bsc\,$ and
$\,\overline W\hs$ correspond to 
$\,\bg$.
\begin{thm}\label{exaot}
Let there be 
given an open interval\/ $\,I\subseteq\bbR\,$ with the coordinate\/ 
$\,\vt\,$ and the metric\/ $\,d\vt\hh^2\nh$, a Riemannian three-man\-i\-fold\/ 
$\,(M\nh,g)$, a vector field\/ $\,v\,$ on\/ $\,M$ and functions\/
$\,\hi,\kp:M\to\bbR\,$ such that\/ $\,\hi\ne0\,$ everywhere. 
If, moreover,
\begin{equation}\label{spc}
\begin{array}{rl}
\mathrm{i)}&
2\nabla\nh d\hi+\hi\hh\ric\,\mathrm{\ has\ the\ }\,g\hyp\mathrm{spectrum\
}\,(-|v|^2\nh,-|v|^2\nh,0)\hh,\\
\mathrm{ii)}&(2\nabla\nh d\hi\,+\,\hi\hh\ric)(v,\,\cdot\,)\,=\,0\hh,\\
\mathrm{iii)}&[(\sca-6\kp)\hi\,+\,6\hh\Delta\hn\hi]\hh\hi\,
=\,12\hh g(\nabla\nnh\hi,\nh\nabla\nnh\hi)\hh,\\
\mathrm{iv)}&\ein(v,\,\cdot\,)\,=\,\kp\hn g(v,\,\cdot\,)\hh,
\end{array}
\end{equation}
then\/ $\,\bg=(g+d\vt\hh^2)/\hi^2$ is a 
weakly Ein\-stein metric on\/ $\,M\nnh\times\hn I\nh$. Also, 
$\,\bg\,$ is Ein\-stein, or con\-for\-mal\-ly flat, if and only if\/
$\,v\,$ vanishes identically on\/ $\,M\,$ or, respectively,
$\,g\,$ has constant sectional curvature. More
precisely, $\,\bei=0$ or $\,\overline W\nnh\nh=\hs0\,$ at precisely those
points at which\/ $\,v=0\,$ or, respectively, $\,\ein=0$. Wherever\/
$\,v\ne0\,$ and\/ $\,\ein\ne0$, 
the lo\-cal-homo\-thety invariant of Remark\/~{\rm\ref{htinv}} is given by
\[
\bsc\hh,\quad(-\nnh\lambda,-\nnh\lambda,\lambda,\lambda)\hh,\quad
(-\bsc/\nh12,\,\xi-\bsc/\nh12,-\xi+\bsc/6)\hh,\quad
(-\bsc/\nh12,\,\xi-\bsc/\nh12,-\xi+\bsc/6),
\]
$\bsc\ne0\,$ being the scalar curvature of\/
$\,\bg$, with parameters\/ $\,\lambda\ne0\,$ and\/ $\,\xi$, while\/
$\,\xi=\bsc/8$ in the case where $\,\ein\,$ has, at every point, the\/
$\,g$-spec\-trum\/ $\,\{\kp,-\kp/2,-\kp/2\}$.
\end{thm}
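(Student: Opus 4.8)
The plan is to compute $\,\bei$, $\,\bsc\,$ and $\,\overline W\hs$ for $\,\bg=(g+d\vt^2)/\hi^2\,$ and to recognize the outcome, at each point, as an instance of the Sing\-er\hh-Thorpe normal form (\ref{ari}-c), whose weakly Ein\-stein character is supplied by Remark~\ref{cnvrs}. Since $\,\hi\,$ is a function on $\,M\,$ alone, on the product $\,g+d\vt^2\,$ its Hessian and Laplacian reduce to $\,\nabla\nh d\hi\,$ and $\,\Delta\hi\,$ on $\,M$, while the product Ric\-ci tensor is $\,\ric\oplus0$. First I would apply (\ref{crm}-ii), with $\,g+d\vt^2\,$ as reference metric, to the tensor $\,T=2\nabla\nh d\hi+\hi\hh\ric\,$ of (\ref{spc}-i)--(\ref{spc}-ii), obtaining $\,\bei=\hi^{-\nh1}T-s_0\hh g\,$ along $\,M\,$ and $\,\bei=-s_0\hh d\vt^2\,$ along $\,I$, where $\,s_0=\sca/4+\Delta\hi/(2\hi)$. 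The $\,g$-spectrum $\,(-|v|^2,-|v|^2,0)\,$ of $\,T\,$ in (\ref{spc}-i), whose trace $\,2\Delta\hi+\hi\sca=-2|v|^2\,$ forces $\,s_0=-|v|^2\nh/(2\hi)$, combined with (\ref{spc}-ii), which identifies $\,\ker T=\mathrm{span}\,v$, then shows that $\,\bei\,$ has $\,\bg$-spectrum $\,(-\lambda,-\lambda,\lambda,\lambda)\,$ with $\,\lambda=\hi|v|^2\nh/2$: the $\,-\lambda$-eigen\-space is $\,v^\perp\cap T\nh M$, and the $\,\lambda$-eigen\-space is spanned by $\,v\,$ and $\,\partial\nh_\vt$. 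In particular $\,\bei=0\,$ precisely where $\,v=0$.

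From (\ref{crm}-i) one next gets $\,\bsc=\hi^2\sca+6\hi\Delta\hi-12\hh g(\nabla\nnh\hi,\nh\nabla\nnh\hi)$, which (\ref{spc}-iii) collapses to $\,\bsc=6\kp\hi^2$. The Weyl tensor is covered by case (\ref{dim}-b) of Sect.\,\ref{cw}: being con\-for\-mal\-ly invariant, with no curvature contributed by the one-di\-men\-sion\-al fibre, it obeys (\ref{wij}), so that $\,\overline W^+$ and $\,\overline W^-$ share the spectrum $\,-\tfrac12\hi^2(\kp_3,\kp_2,\kp_1)$, realized in the bi\-vec\-tor bases (\ref{spr}) built from $\,v\nh_4=\partial\nh_\vt\,$ and a positive $\,g$-or\-tho\-nor\-mal frame $\,v\nh_1,v\nh_2,v\nh_3\,$ of $\,T\nh M\,$ di\-ag\-o\-nal\-izing $\,\ein\,$ with eigen\-val\-ues $\,\kp_1,\kp_2,\kp_3$. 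Here I would use (\ref{spc}-iv) to choose $\,v\nh_3=v/|v|$, whence $\,\kp_3=\kp$.

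The decisive point is that this one frame serves both tensors: because the $\,(-|v|^2)$-eigen\-space of $\,T\,$ is all of $\,v^\perp\cap T\nh M$, the frame $\,v\nh_1,v\nh_2,v\nh_3\,$ di\-ag\-o\-nal\-izes $\,T$, hence $\,\bei$, so that $\,u\nh_1,\dots,u\nh_4=v\nh_1,v\nh_2,v\nh_3,\partial\nh_\vt\,$ di\-ag\-o\-nal\-izes $\,\bei\,$ as $\,(-\lambda,-\lambda,\lambda,\lambda)$ while its bases (\ref{bas}) di\-ag\-o\-nal\-ize $\,\overline W^\pm$. Comparison with (\ref{ari}-c) gives $\,c_2=c_3=c_4=0$, $\,\bsc=6\kp\hi^2\,$ and $\,\xi=\tfrac12\hi^2(\kp-\kp_2)$; as $\,c_2+c_3+c_4=0$, Remark~\ref{cnvrs} shows that $\,\bg\,$ is weakly Ein\-stein wherever $\,v\ne0\,$ and $\,\ein\ne0$, while at the remaining points it is Ein\-stein (where $\,v=0$) or con\-for\-mal\-ly flat and scalar-flat (where $\,\ein=0$, so $\,\kp=0\,$ and $\,\bsc=0$), hence weakly Ein\-stein there too. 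Equivalently, in (\ref{iff}) the sole bi\-vec\-tor carrying a nonzero component of $\,\bei\,$ is $\,u\nh_1\wedge u\nh_2\pm u\nh_3\wedge u\nh_4$, on which both Weyl eigen\-val\-ues equal $\,-\hi^2\kp/2$, summing to $\,-\bsc/6$. The remaining assertions then follow: $\,\overline W=0\,$ iff all $\,\kp_a=0$, i.e.\ iff $\,\ein=0$, equivalently (as $\,\dim M=3$) iff $\,g\,$ has constant sectional curvature; the displayed in\-var\-i\-ant is the one just computed; and when $\,\ein\,$ has $\,g$-spectrum $\,\{\kp,-\kp/2,-\kp/2\}$, so $\,\kp_2=-\kp/2$, one gets $\,\xi=\tfrac34\hi^2\kp=\bsc/8$.

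The step I expect to be the main obstacle is the bookkeeping behind the first two paragraphs -- carrying out the con\-for\-mal change for a product with a flat one-di\-men\-sion\-al factor while tracking which of $\,g,\,g+d\vt^2,\,\bg\,$ each gradient, Hessian, Laplacian and eigen\-val\-ue refers to -- together with verifying, in the alignment step, that $\,v\nh_1,\dots,v\nh_4\,$ is positively oriented, so that (\ref{spr}) coincides with the canonical bases (\ref{bas}) and the Sing\-er\hh-Thorpe comparison is legitimate.
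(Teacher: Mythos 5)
Your proof is correct and follows essentially the same route as the paper's Sect.\,\ref{px}: computing $\,\bei\,$ via the conformal-change formulas to get the $\,\bg$-spectrum $\,(-\nnh\lambda,-\nnh\lambda,\lambda,\lambda)\,$ with $\,\lambda=\hi\hh|v|^2/2\,$ from (\ref{spc}-i,ii), collapsing $\,\bsc\,$ to $\,6\hh\kp\hn\hi^2$ via (\ref{spc}-iii), aligning the $\,\ein$-diagonalizing frame (with $\,v\,$ along the third vector) against the Weyl spectra of (\ref{wij})--(\ref{spr}), and concluding via (\ref{ari}-c) with $\,c_2\w=c_3\w=c_4\w=0\,$ and Remark~\ref{cnvrs}, including the same formulas $\,\xi=\hi^2(\kp-\kp_2\w)/2\,$ and $\,\xi=\bsc/8\,$ in the special case. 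The only (harmless) divergence is that you treat the loci $\,v=0\,$ and $\,\ein=0\,$ pointwise through (\ref{iff}), where the paper instead argues on the interior of $\,\{v=0\}\,$ via (\ref{rct}) and restricts to points generic for the spectrum of $\,\ein$.
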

\begin{rem}\label{spceq}
The conditions i) -- ii) in (\ref{spc}), imposed on a function $\,\hi\,$
and a vector field $\,v\,$ on a Riemannian three-man\-i\-fold $\,(M\nh,g)$, are
equivalent to
\begin{equation}\label{amt}
\begin{array}{l}
2\nabla\nh d\hi+\hi\hh\ric-(\Delta\hn\hi+\sca\hi/2)g
=\omega\hn\otimes\omega\hh\mathrm{\ for\ the\ }1\hyp\mathrm{form\
}\hh\omega=g(v,\,\cdot\,)\mathrm{,\nnh\ which}\\
\mathrm{also\ implies\ that\ }\,\,\sym\,=\hs2\nabla\nh d\hi\hh
+\hh\hi\hh\ric\,\,\mathrm{\ has\
}\,\mathrm{tr}\nnh_g\w\sym\hh=\hh2(\Delta\hn\hi
+\sca\hi/2)\hh=\hh-\nh2|v|^2\nnh.
\end{array}
\end{equation}
In fact, i) -- ii) obviously yield the second line of (\ref{amt}), and so
does the first line of (\ref{amt}). Now assume i) -- ii). At points $\,x\,$
where $\,v\nh_x\w=0$, (\ref{amt}) follows: the tensor field
$\,\sym=2\nabla\nh d\hi+\hi\hh\ric$, having the 
spectrum $\,(-|v|^2\nh,-|v|^2\nh,0)$, vanishes at $\,x\,$ along with its
$\,g$-trace $\,2(\Delta\hn\hi+\sca\hi/2)=-\nh2|v|^2\nh$. Wherever 
$\,v\ne0$, the left-hand side in (\ref{amt}), with the spectrum
$\,(0,0,|v|^2)$, clearly equals the right-hand side: treated as
$\,(1,1)$ tensors, both send $\,v\,$ to $\,|v|^2\nh v$, and annihilate
vectors orthogonal to $\,v$.
Conversely, (\ref{amt}) easily gives $\,\sym(v,\,\cdot\,)=0\,$ 
and $\,\sym(v'\nh,\,\cdot\,)=-|v|^2\nh g(v'\nh,\,\cdot\,)\,$ if
$\,g(v,v')=0$.
\end{rem}

\section{Proof of Theorem~\ref{exatt}}\label{pt}
\setcounter{equation}{0}
Let us adopt the notation of (\ref{cnv}) and (\ref{crm}), with
$\,(\ha,\ta)=(\hi,0)$. Now our
\begin{equation}\label{ado}
\begin{array}{l}
\varSigma,g,\nabla\nh,\Delta,2K,\varPi\nh,h,2c\hs\,\mathrm{\ 
become\ }\,\hs\hM\nh,\hg,\hna\nh,\hde\hs,\hsc,\tM\nh,\tg,\tsc\mathrm{\hh,\
and\ }\,\hs\tY\nnh=\hh\hs\tQ=\hh0,\\
\mathrm{while\ }\hs g\hs\mathrm{\ is\ now\ the\
con\-for\-mal}\hyp\mathrm{prod\-uct\ metric\ }\,g=\bg/\hi^2\nnh\mathrm{,\nnh\
for\ }\,\bg=\hg+\tg.
\end{array}
\end{equation}
Except for (iii) -- (v), all of our assertion will be obvious
from Remark~\ref{cnvrs} 
if we prove the following two claims, for some positive
$\,g$-or\-tho\-nor\-mal basis
$\,u_1\w,\dots,u_4\w$ of the tangent space at any point:
\begin{enumerate}
\item[{\rm(I)}]Assuming (i) we get (\ref{ari}-b) with
$\,c_2\w=c_3\w=c_4\w=0\,$ and $\,\mu=0$.
\item[{\rm(II)}]Similarly, (ii) leads to (\ref{ari}-c) with 
$\,c_2\w=c_3\w=c_4\w=0\,$ and $\,\xi=\sca/8$.
\end{enumerate}
In both cases, by 
(\ref{rco}), for the Ein\-stein tensor $\,\ein=\ric-\sca\hs g/4$,
\begin{equation}\label{rcc}
\mathrm{the\ factor\ distributions\ }\,T\hn\hM\,\mathrm{\ and\ 
}\,T\hn\tM\,\mathrm{\ are\ }\,\ein\hyp\mathrm{orthogonal.}
\end{equation}
We will choose  
$\,(u_1\w,\dots,u_4\w)=(v\nh_1\w,\dots,v\nh_4\w)\,$ and
$\,(i,j,k,l)\,$ so 
as to have (\ref{uij}-a), (\ref{wpm}) and (\ref{spr}). 
In the case (i), we begin by observing that
\begin{enumerate}
\item[{\rm(a)}]$\hi^2\nh(\tsc-\hsc)=4\hh\hQ$,
\qquad\quad{\rm(b)}\hskip5pt$\hi\hY\nnh\hn=2\hh\hQ$,
\item[{\rm(c)}]$\sca\,=\,\hi^2(\hsc+\tsc\hs)\,\,$ for the scalar curvature 
$\,\sca\,$ of $\,g$,
\item[{\rm(d)}]$\ein\,$ restricted to $\,T\nh\hM\,$ is $\,g$-trace\-less,
and on $\,T\nh\tM\,$ it equals $\,0$.
\end{enumerate}
Namely, (i) yields (a) -- (b), as 
$\,\hi\hY\nnh\nh-2\hh\hQ
=\hi\hs\hde\hi-2\hg(\hna\nnh\hn\hi,\hna\nnh\hn\hi)=-\hi^3\hde\hi^{-1}\nh=0\,$
by (\ref{dqe}-b). 
Now (b) and (\ref{eee}-a), with $\,\tY\nnh\nh=\hs\tQ=0$, imply
(c), while (\ref{rcc}) leads to (d) for $\,T\nh\hM\nh$, since (d) for
$\,T\nh\tM\,$ is obvious from (a) -- (b), (\ref{eee}-c) and (\ref{xix}).

In view of (d) and (\ref{rcc}) we may choose an orientation and a positive
$\,g$-or\-tho\-nor\-mal basis
$\,(u_1\w,\dots,u_4\w)=(v\nh_1\w,\dots,v\nh_4\w)\,$ of
$\,T\hskip-3pt_x\w\hn M\,$ at any given point $\,x$, realizing for
$\,\ein$ a spectrum of
the form $\,(-\nnh\lambda,0,0,\lambda)$, which yields (\ref{uij}-a) 
with $\,(i,j,k,l)=(1,4,2,3)$.

Now (\ref{wpm}), (\ref{spr}) and (c) imply
(\ref{ari}-b) for $\,c_2\w=c_3\w=c_4\w=0$, as required in (I).

Next, assume (ii), which has the following obvious consequences:
\begin{enumerate}
\item[{\rm(e)}]$2\hi\hY\nnh\hn-4\hh\hQ=-\hi^2\nh(\hs\hsc+\tsc\hs)$, 
\item[{\rm(f)}]$\ein\,$ restricted to either factor distribution equals
a function times $\,g$,
\item[{\rm(g)}]$\sca\,=\,-\nh2\hi^2(\hsc+\tsc\hs)\,\,$ for the scalar
curvature $\,\sca\,$ of $\,g$,
\end{enumerate}
(g) being immediate from (e) and (\ref{rwr}-iii) with
$\,\tY\nnh\nh=\hs\tQ=0$. 

Just as (d) did before, (f) now allows us to choose a positive 
$\,g$-or\-tho\-nor\-mal basis $\,u_1\w,\dots,u_4\w$ of 
of any given oriented tangent space consisting of eigen\-vec\-tors of
$\,\ein\,$ with eigen\-val\-ues having, this time, 
the form $\,(-\nnh\lambda,-\nnh\lambda,\lambda,\lambda)$, leading to
(\ref{uij}-a), where $\,(i,j,k,l)=(1,2,3,4)$. Then (\ref{wpm}) and (g) give 
(\ref{ari}-c) with $\,c_2\w=c_3\w=c_4\w=0$ and $\,\xi=\sca/8$, proving
(II).

For the con\-for\-mal-prod\-uct metric $\,g=\bg/\hi^2\nh$,
since $\,\ta=\tY\nnh\nh=\tQ=0$,
\begin{equation}\label{rhs}
\begin{array}{l}
\mathrm{the\hs\ 
right}\hyp\mathrm{hand\hs\ sides\hs\ of\hs\ (\ref{eee}}\hyp\mathrm{b)\hs\
and\hs\ (\ref{eee}}\hyp\mathrm{c)}\\
\mathrm{have\ }\hs g\hyp\mathrm{traces\
differing\ by\ }\,2\hi\hY\nh+\hh\hi^2\nh(\hsc-\tsc).
\end{array}
\end{equation}
The `only if\hs' claim in (iii) is obvious from (\ref{eee}-b).
The `if' part now follows: assuming that $\,2\hna\nh d\hi=\hY\nh\hg$, we get
$\,2\hi\hY\nnh+\hi^2\nh(\hsc-\tsc)=0\,$ in (\ref{rhs}) by combining
(a) and (b). Similarly, in the case of (ii), the Ein\-stein condition -- 
that is, vanishing of $\,2\hi\hY\nnh+\hi^2\nh(\hsc-\tsc)\,$ in
(\ref{rhs}) -- is, by (e), equivalent to having $\,2\hh\hQ=\hi^2\nh\tsc$, 
which yields (iv). Finally, (v) is obvious from Remark~\ref{prsrf}.

\section{Proof of Theorem~\ref{exaot}}\label{px}
\setcounter{equation}{0}
The tensor field $\,\sym=2\nabla\nh d\hi+\hi\hh\ric\,$ has the spectrum 
$\,(-|v|^2\nh,-|v|^2\nh,0)$. In the interior of the subset of
$\,M\nnh\times\hn I\hs$
on which $\,v=0$, one thus has $\,\sym=0$, and (\ref{rct}), applied to
$\,n=4\,$ and $\,g,\bg\,$ replaced with our $\,\bg\,$ and the product metric
$\,g+dt^2\nh$, implies that $\,\brc\,$ is a functional multiple of $\,\bg$,
which makes $\,\bg\,$ an Ein\-stein metric, proving our first
claim in this case.

We may therefore restrict our discussion to points where $\,v\ne0$, further
assuming that they are generic relative to the spectrum of $\,\ein\,$
(see Remark~\ref{genrc}), and $\,M\,$ is oriented. Thus, by (\ref{spc}-iv),
locally, 
some positive $\,\bg$-or\-tho\-nor\-mal frame $\,u_1\w,\dots,u_4\w$
has $\,u_1\w,u_2\w,u_3\w$
tangent to the $\,M\,$ factor and 
di\-ag\-o\-nal\-izing $\,\ein\hh$, with our $\,v\,$
equal to a positive functional multiple of $\,u_3\w$.

Next, $\,u_1\w,\dots,u_4\w$ also 
di\-ag\-o\-nal\-ize $\,\bei\,$ with the ordered $\,\bg$-spec\-trum
$\,(-\nnh\lambda,-\nnh\lambda,\lambda,\lambda)$, for 
$\,\lambda=\hi\hh|v|^2/2$.
Namely, we have (\ref{eee}) with
$\,\tsc,\tY\nnh\nnh,\tQ,\tei,\nnh\tna\nh d\hi\,$
equal to zero, $\hs(n,p,q)\nh=(4,3,1)$, and 
$\,M\nh,I\nh,\bsc,\bei,g,\nabla\nh,Y\nnh\nh,Q,dt^2$ replacing
$\,\hM\nh,\tM\nh,\sca,\ein,\hg,\hna\nh,\hY\nnh\nh,\hQ,\tg$, so that
\begin{equation}\label{nwe}
\begin{array}{rl}
\mathrm{a)}&\bsc\hh\,=\,\hh\hi^2\sca\hs\,+\,\hs6\hi Y\nh-\,12\hh Q\mathrm{,\ where\
}\,Q=g(\nabla\nnh\hi,\nh\nabla\nnh\hi)\,\mathrm{\ and\
}\,Y\nnh\nh=\Delta\hn\hi\hh,\\
\mathrm{b)}&12\hs\bei=12\hs\ein+24\hi^{-\nh1}\hn\nabla\nh d\hi
+(\sca-6\hi^{-\nh1}Y)\hh g\,\mathrm{\ along\ }\,M\nh,\\
\mathrm{c)}&4\hh\bei\,=\,-(\sca+2\hi^{-\nh1}Y)\,dt^2\hskip6pt\mathrm{\ along\
}\,I\nh.
\end{array}
\end{equation}
As $\,\ein=\ric-\sca g/3$, (\ref{nwe}-b) combined with (\ref{amt}) gives
$\,\hi\hs\bei=\sym+|v|^2\nh g/2$, along $\,M\nh$, which -- according to
(\ref{spc}) -- 
is di\-ag\-o\-nal\-ized by $\,u_1\w,u_2\w,u_3\w$ with the ordered
$\,g$-spec\-trum $\,(-|v|^2/2,-|v|^2/2,|v|^2/2)$, that is, the  
$\,\bg$-spec\-trum $\,(-\hi\lambda,-\hi\lambda,\hi\lambda)$. At the same time,
by (\ref{nwe}-c) and (\ref{amt}), along $\,I\hs$ (the span of $\,u_4\w$)
$\,\bei\,$ equals $\,dt^2$ times $\,\hi^{-\nh1}\nh|v|^2/2$, which is
nothing else than $\,\bg\,$ times $\,\lambda$. Consequently,
\begin{equation}\label{esp}
\begin{array}{l}
\bei\,\,\mathrm{\ has\ the\ }\,\bg\hyp\mathrm{spectrum\ 
}\,(-\nnh\lambda,-\nnh\lambda,\lambda,\lambda)\,\mathrm{\ with}\\ 
\lambda\,=\,\hi\hh|v|^2/2\mathrm{,\ realized\ by\ our\
}\,(u_1\w,\dots,u_4\w)\hh.
\end{array}
\end{equation}
On the other hand, with the new notation, in view of (\ref{wij}),
\begin{equation}\label{hts}
2\hi^{-\nh2}\hs\overline W^\pm\mathrm{\ have\ the\ }\,\bg\hyp\mathrm{spectrum\ 
}\,(-\kp_3\w,-\kp_2\w,-\kp_1\w)\mathrm{,\ realized\
as\ in\ (\ref{spr}),}
\end{equation}
for the Weyl tensor $\,\overline W\nnh$ of $\,\bg$,
where $\,\kp\nnh_i\w$ are the eigen\-value functions of
$\,\ein$, associated with $\,u_i\w$, $\,i=1,2,3$. Consequently, our choice of
$\,u_3\w$, combined with (\ref{spc}-iv)
gives $\,\kp=\kp_3\w$. The condition (\ref{spc}-iii),
which now
reads $\,[(\sca-6\kp)\hi+6\hh Y]\hh\hi
=12\hh Q$, turns (\ref{nwe}-a) into the equality
$\,\bsc=6\hi^2\hn\kp$. We thus obtain 
(\ref{ari}-c) for $\,\bei\,$ and $\,\overline W\nnh\nh$, with 
$\,c_2\w=c_3\w=c_4\w=0\,$ and $\,\xi\,$ given by
$\,12\hs\xi=\bsc-6\hi^2\hn\kp_2\w=2\hs\bsc+6\hi^2\hn\kp_1\w$ (both
equalities yield the same $\,\xi$, since
$\,\kp_1\w+\kp_2\w=-\kp$), along with the final clause of the theorem:
if $\,\kp_1\w=\kp_2\w=-\kp/2$, the preceding sentence shows that
$\,\xi=\bsc/8$.

Remark~\ref{cnvrs} now implies that 
$\,\bg\,$ is a weakly Ein\-stein metric, while the claim about the
Ein\-stein and con\-for\-mal\-ly flat cases 
is obvious from (\ref{esp}) and (\ref{hts}).

\section{Euclidean spheres and Lo\-rentz\-i\-an pseu\-do\-spheres}\label{es}
\setcounter{equation}{0}
Throughout this section, 
$\,\langle\,,\rangle\,$ denotes a Euclidean or Lo\-rentz\-i\-an 
($-\,+\,\ldots+$) inner product $\,\langle\,,\rangle\,$ in a 
real vector space $\,\mathcal{V}\hs$ of dimension $\,n+1\ge3$, and 
$\,\varSigma\,$ stands for the sphere $\,S^{-\nnh1}\nh(0)\,$ or one sheet 
of the two-sheet\-ed pseu\-do\-sphere $\,S^{-\nnh1}\nh(0)$,
where $\,S(v)=\langle v,v\rangle-a$, and $\,a\in(0,\infty)\,$ or, respectively,
$\,a\in(-\infty,0)$. The sub\-man\-i\-fold metric $\,g\,$ of $\,\varSigma\,$
thus has constant sectional curvature $\,K\nnh=a^{-\nnh1}\nh$.

A function $\,\hi\,$ on a Riemannian manifold $\,(M\nh,g)\,$ is called 
\emph{con\-cir\-cu\-lar\/} \cite{yano} if 
\begin{equation}\label{cnc}
\begin{array}{l}
\nabla\nh d\hi\,=\hs\,\sigma\hn g\hs\,\mathrm{\ for\ some\ function\
}\,\sigma\mathrm{,\ obviously\ given\hs\ by\ }\,n\sigma\hs=\hs\Delta\hi\hh,\\
\mathrm{for\ }\,n=\dim M\nh\mathrm{,\nh\ or,\nh\ equivalently,\nnh\ 
}\,\nabla\nh\hi\mathrm{\ is\ a\ con\-for\-mal\ vector\ field.}
\end{array}
\end{equation}
\begin{rem}\label{rctev}
In {\rm(\ref{cnc})}, if 
$\,\sigma=F(\hi)\,$ is a function of $\,\hi$, then the gradient 
$\,\nabla\nh\hi$, at points where\/ $\,\nabla\nh\hi\ne0$, is an 
eigen\-vec\-tor of the Ric\-ci tensor for the eigen\-val\-ue  
$\,(1-n)F'\nh(\hi)$. At such points, $\,g\,$ has the Gaussian curvature  
$\hh-F'\nh(\hi)\hs$ when $\,n=2$. (This is obvious from (\ref{bch})
applied to $\,v=\nabla\nh\hi$.)
\end{rem}
\begin{lem}\label{concr}Con\-cir\-cu\-lar functions on any nonempty connected
open subset\/ $\,\,U$ of\/ $\,\varSigma$, up to additive constants,
coincide with restrictions to\/
$\,\,U\hs$ of linear functionals\/ $\,\mathcal{V}\nh\to\bbR$, and\/
$\,\sigma\,$ in\/ {\rm(\ref{cnc})} then equals\/ $\,-a^{-\nnh1}\nh\hi$.
\end{lem}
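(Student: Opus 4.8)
The plan is to use the concrete realization of $\,\varSigma\,$ as the level set $\,S^{-1}(0)\,$ of $\,S(v)=\langle v,v\rangle-a\,$ in $\,\mathcal{V}$, at the heart of which lies the umbilicity of $\,\varSigma$. At each $\,x\in\varSigma\,$ the position vector $\,x\,$ is normal and $\,\langle x,x\rangle=a$; differentiating $\,\langle\gamma,\gamma\rangle\equiv a\,$ twice along a curve in $\,\varSigma\,$ I would read off the second fundamental form $\,h(X,Y)=-a^{-1}g(X,Y)\,x$, valid verbatim in both the Euclidean and the Lorentzian case (where the normal $\,x\,$ is spacelike, resp.\ timelike). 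This one formula handles both implications. For the implication from linear functionals to con\-cir\-cu\-lar functions, a linear functional $\,\hi=\langle w,\cdot\rangle\,$ has tangential Hessian $\,(\nabla d\hi)(X,Y)=\langle w,h(X,Y)\rangle=-a^{-1}\langle w,x\rangle\,g(X,Y)=-a^{-1}\hi\,g(X,Y)$, so $\,\hi\,$ is con\-cir\-cu\-lar with $\,\sigma=-a^{-1}\hi$; since constants are trivially con\-cir\-cu\-lar and $\,\nabla d\hi=\sigma g\,$ is linear in $\,\hi$, every linear functional plus a constant is con\-cir\-cu\-lar.

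For the converse I would deliberately avoid extracting $\,\sigma\,$ from third-derivative/curvature identities, which are prone to sign-convention errors, and work instead in the ambient space. On $\,U\,$ I would form the $\,\mathcal{V}$-valued function $\,W=\nabla\hi+a^{-1}\hi\,x$, where $\,\nabla\hi\in T_x\varSigma\subset\mathcal{V}\,$ is the gradient and $\,x\,$ is the position vector. Writing $\,D\,$ for the flat directional derivative in $\,\mathcal{V}\,$ and combining the Gauss formula $\,D_X(\nabla\hi)=\nabla_X\nabla\hi+h(X,\nabla\hi)$, the con\-cir\-cu\-lar condition in its $\,(1,1)$ form $\,\nabla_X\nabla\hi=\sigma X$, the umbilicity relation $\,h(X,\nabla\hi)=-a^{-1}(X\hi)\,x$, and $\,D_X x=X$, all cross terms cancel and I would obtain
\[
D_X W\,=\,(\sigma+a^{-1}\hi)\,X\,=:\,\rho\,X .
\]

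The key step is then that $\,\rho=\sigma+a^{-1}\hi\,$ is constant. Because $\,W\,$ is a genuine map into the fixed vector space $\,\mathcal{V}$, one has $\,d(dW)=0$; expanding this and cancelling the connection terms by torsion-freeness of $\,D\,$ leaves $\,(X\rho)\,Y-(Y\rho)\,X=0\,$ for all $\,X,Y\,$ tangent to $\,\varSigma$. Since $\,\dim\varSigma=n\ge2\,$ (as $\,\dim\mathcal{V}=n+1\ge3$), choosing $\,X,Y\,$ linearly independent forces $\,d\rho=0$, so $\,\rho\,$ is constant on the connected set $\,U$. Then $\,D_X(W-\rho\,x)=\rho X-\rho X=0$, whence $\,W=w_0+\rho\,x\,$ for a fixed $\,w_0\in\mathcal{V}$; pairing with $\,x\,$ and using $\,\langle\nabla\hi,x\rangle=0\,$ and $\,\langle x,x\rangle=a\,$ gives $\,\hi=\langle W,x\rangle=\langle w_0,x\rangle+\rho a$. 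Thus $\,\hi\,$ is the restriction of the linear functional $\,\langle w_0,\cdot\rangle\,$ plus the constant $\,\rho a$, and $\,\sigma=\rho-a^{-1}\hi$; when $\,\hi\,$ is itself a linear functional the constant $\,\rho a\,$ vanishes, so $\,\rho=0\,$ and $\,\sigma=-a^{-1}\hi$, as claimed. I expect the only genuine obstacles to be securing the umbilicity sign uniformly across the two signatures and invoking $\,n\ge2$; the rest is a direct unwinding of the Gauss formula and of $\,d^2=0$.
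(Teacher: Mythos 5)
Your proof is correct, and it takes a genuinely different route from the paper's. The paper's argument is a two-step sketch: the Bochner identity (\ref{bch}) applied to (\ref{cnc}) (cf.\ Remark~\ref{rctev}, together with $\,\ric=(n-1)a^{-1}g\,$ on $\,\varSigma$) forces $\,\sigma+a^{-1}\hi\,$ to be constant, so that after subtracting a constant one has $\,\nabla d\hi=-a^{-1}\hi\,g$; differentiating restrictions of linear functionals twice along geodesics, described explicitly by trigonometric or hyperbolic formulas, shows that these restrictions satisfy the same equation, and uniqueness for the resulting second-order ODE along geodesics emanating from a fixed point (matching value and differential there) identifies the two classes of functions. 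You avoid both the Bochner identity and geodesics entirely: the umbilicity formula $\,h(X,Y)=-a^{-1}g(X,Y)\,x\,$ gives the forward implication by a one-line Hessian computation, and for the converse you form the ambient $\,\mathcal{V}$-valued field $\,W=\nabla\hi+a^{-1}\hi\,x$, show $\,D_XW=\rho X\,$ with $\,\rho=\sigma+a^{-1}\hi$, obtain constancy of $\,\rho\,$ from flatness and torsion-freeness of $\,D\,$ (this is where $\,n\ge2\,$ enters, which holds since $\,\dim\mathcal{V}=n+1\ge3$), and integrate to get $\,W=w_0+\rho\,x$, hence $\,\hi=\langle w_0,\cdot\,\rangle+\rho a$. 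Your computations check out, including the cancellation of the $\,\pm a^{-1}(X\hi)\hh x\,$ terms and the uniform treatment of both signatures via $\,\langle x,x\rangle=a$. What your approach buys: it is self-contained (no geodesic parametrizations, no ODE uniqueness, no need to know the Ricci curvature of $\,\varSigma$), it produces the linear functional $\,w_0\,$ constructively, and it is essentially the classical Yano/Obata-type integrability argument for concircular fields. What the paper's route buys: brevity, since the Bochner identity and Remark~\ref{rctev} are already in place and reused elsewhere, so that the constancy of $\,\sigma+a^{-1}\hi\,$ comes for free from a Ricci-eigenvector observation rather than a separate curvature-of-$D$ argument.
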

This is immediate from the Boch\-ner identity (\ref{bch}) applied to 
(\ref{cnc}), as one sees differentiating restrictions of linear functionals 
twice along the $\,g$-ge\-o\-des\-ics, and using the trigonometric 
or hyperbolic descriptions of the latter.

Let $\,u\in\mathcal{V}\nh$. 
For the La\-plac\-i\-an $\,\Delta\,$ and grad\-i\-ent 
$\,\nabla\hs$ associated with the sub\-man\-i\-fold metric $\,g\,$ of 
$\,\varSigma$, and the function 
$\,\chi:\varSigma\to\bbR\,$ obtained by restricting to $\,\varSigma$
the linear functional $\,\langle u,\,\cdot\,\rangle$, one then has
\begin{equation}\label{dch}
\mathrm{a)}\hskip6pt\Delta\chi=-na^{-\nnh1}\nh\chi\hh,\qquad
\mathrm{b)}\hskip6ptg(\nabla\nh\chi,\nnh\nabla\nh\chi)=-a^{-\nnh1}\nh\chi^2\nh
+\langle u,u\rangle\hh.
\end{equation}
In fact, Lemma~\ref{concr} yields (\ref{dch}-a), while (\ref{dch}-b) is also
immediate: at any $\,y\in\varSigma$, the $\,g$-grad\-i\-ent of $\,\chi\,$
is the component of $\,u\,$ tangent to $\,\varSigma$, with the norm squared
$\,\langle u,u\rangle-a^{-\nnh1}\nh\langle u,y\rangle^2\nh$.

On $\,M\nh=\varSigma\times\varSigma$, let $\,\bde\,$ and $\,\bna\,$
be the $\,\bg$-La\-plac\-i\-an and $\,\bg$-grad\-i\-ent, for 
the Riemannian product $\,\bg\,$ of two copies of the metric $\,g\,$
(which is also the sub\-man\-i\-fold metric of $\,M\,$ within
$\,\mathcal{V}\nh\times\mathcal{V}$). Then, for
the function 
$\,\psi:M\to\bbR\,$ arising as the restriction of $\,B\,$ to $\,M\,$ of 
any bi\-lin\-e\-ar form
$\,B:\mathcal{V}\nh\times\mathcal{V}\nh\to\bbR$, 
\begin{equation}\label{dps}
\mathrm{a)}\hskip6pt\bde\hh\psi=-\nh2na^{-\nnh1}\nh\psi\hh,\qquad
\mathrm{b)}\hskip6pt\bg(\bna\nnh\psi,\nnh\bna\nnh\psi)
=-\nh2a^{-\nnh1}\nh\psi^2\nh+F\nh,
\end{equation}
where $\,F:M\to\bbR\,$ is given by
$\,F(y,z)=\langle Ay,Ay\rangle+\langle A\nh^*\nnh z,A\nh^*\nnh z\rangle\,$ 
for the linear 
en\-do\-mor\-phism $\,A\,$ of $\,\mathrm{V}\hs$ such that 
$\,B=\langle A\hh\cdot\,,\,\cdot\,\rangle$. Namely, our claim trivially
follows from (\ref{dch}) 
applied to $\,\chi=\psi(\,\cdot\,,z)$, or $\,\chi=\psi(y,\,\cdot\,)$,
with fixed $\,z$, or $\,y$, and $\,u=A\nh^*\nnh z\,$ or, respectively, 
$\,u=Ay$.
\begin{rem}\label{eulor}Any polynomial function
$\,P:\mathcal{V}\nh\to\bbR\,$ vanishing on $\,\varSigma\,$ is polynomially
divisible by $\,S$. To see this, 
rather than invoking Hil\-bert's \emph{Null\-stel\-len\-satz}, let us fix
a line $\,t\mapsto x+tv\,$ intersecting $\,\varSigma\,$ at two points. Along
the line, $\,P\nh/S\,$ is a polynomial, 
and so $\,d\hh^k\nh[P\nh/S\hh]/\hn dt^k\nh=0\,$ for some $\,k$.
The same holds for all
nearby lines; thus, $\,P\nh/S\,$ (which is a smooth function on
$\,\mathcal{V}\hs$ according to Remark~\ref{divis}) has the
$\,k$\hh th directional
derivative along $\,v\,$ equal to zero at $\,x\,$ for all pairs
$\,(v,x)\,$ from a nonempty open set, and our conclusion is a consequence of 
real-an\-a\-lyt\-ic\-i\-ty.
\end{rem}
\begin{lem}\label{bothc}If\/ $\,(\hM\times\tM\nh,\hg+\tg)\,$ is a Riemannian
product of surfaces of constant scalar 
curvatures\/ $\,\tsc\,$ and\/ $\,\hsc\,$ with\/ $\,\hsc+\tsc\ne0\,$ and, for a
function\/ $\,\hi:\hM\times\tM\to\bbR$,
\begin{enumerate}
\item[{\rm(i)}]$\hna\nh d\hi\,$ and\/ $\,\tna\nh d\hi\,$ are functional 
multiples of\/ $\,\hg\,$ and\/ $\,\tg$,
\item[{\rm(ii)}]$2(\hY\nnh\nh-\hs\tY)\,=\,\hi\hh(\tsc-\hsc)$,
\item[{\rm(iii)}]$(1-\ve)(\hs\hsc+\tsc\hs)\hh\hi^2\nh+6\hi\hh(\hY\nnh+\tY)
=12(\hQ+\tQ)$, where\/ $\,\ve\in\bbR\smallsetminus\{1\}$,
\end{enumerate}
then\/ 
$\,\hi=\ha+\ta\,$ with some\/ $\,\ha\nnh:\nnh\hM\nh\to\bbR\,$ and\/ 
$\,\ta\nnh:\nnh\tM\nh\to\bbR$.
\end{lem}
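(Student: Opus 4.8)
The plan is to prove additive separation by showing that the mixed ($\hM$-$\tM$) block of the Hessian $\bna d\hi$ vanishes, since by (\ref{rco}) this is exactly equivalent to $\hi=\ha+\ta$. In product coordinates write $H_{ia}=\partial_i\partial_a\hi$ for that block (a mixed index carries no Christoffel term for the product connection), so the goal is $H\equiv0$. The backbone is hypothesis (i): for each fixed $z$ the function $\hi(\cdot,z)$ is concircular on the surface $\hM$, which has constant Gaussian curvature $\hsc/2$ because $\hsc$ is constant; likewise along $\tM$. First I would insert $\hna d\hi=(\hY/2)\hg$ into the Bochner identity (\ref{bch}) with $v=\hna\hi$, exactly as in Remark~\ref{rctev} and Lemma~\ref{concr}, to get $d(\hY+\hsc\hi)=0$ along $\hM$, and symmetrically along $\tM$, yielding
\begin{equation*}
\hY+\hsc\hi=2\hat c(z),\qquad\tY+\tsc\hi=2\tilde c(y),
\end{equation*}
for some $\hat c:\tM\to\bbR$ and $\tilde c:\hM\to\bbR$.

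The case $\hsc\neq\tsc$ is then immediate and needs only (i)--(ii): substituting these two relations into hypothesis (ii) gives $(\hsc-\tsc)\hi=4(\hat c(z)-\tilde c(y))$, so $\hi=[4\hat c(z)-4\tilde c(y)]/(\hsc-\tsc)$ is already of the form $\ha+\ta$. Hypothesis (iii) plays no role here.

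The remaining case $\hsc=\tsc=:s_0$ is where the real work lies, and here $\hsc+\tsc\neq0$ forces $s_0\neq0$. Now (ii) collapses the two functions above to a single constant $\hat c=\tilde c=e$, whence $\hY=\tY=-s_0\hi+2e$. The plan is to bring in (iii), which after dividing by $12$ becomes $\hQ+\tQ=A\hi^2+2e\hi$ with $A=-(5+\ve)s_0/6$. Applying the gradient identity (\ref{dqe}-a) on each factor gives $\partial_i\hQ=\hY\partial_i\hi$ and $\partial_a\tQ=\tY\partial_a\hi$, while $\partial_a\hQ=2\hna^j\hi\,H_{ja}$ and $\partial_i\tQ=2\tna^b\hi\,H_{ib}$ are direct. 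Differentiating the rewritten (iii) and combining with these four relations yields $\partial_i(\hQ-\tQ)=2\Psi\,\partial_i\hi$ and $\partial_a(\hQ-\tQ)=-2\Psi\,\partial_a\hi$, where $\Psi:=-(s_0+A)\hi+e$; equating the mixed second partials of $\hQ-\tQ$ then produces the single clean relation
\begin{equation*}
\Psi\,H_{ia}=(s_0+A)\,\partial_i\hi\,\partial_a\hi,
\end{equation*}
in which $s_0+A=(1-\ve)s_0/6$ is nonzero precisely because $\ve\neq1$ and $s_0\neq0$.

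I expect the main obstacle to be the endgame turning this into $H\equiv0$, which I would run pointwise on $\{d\hi\neq0\}$. Contracting the displayed relation with $\hna\hi$ and with $\tna\hi$, and comparing with the consequences $\hna^j\hi H_{ja}=\tfrac12 B\hi\,\partial_a\hi$ of differentiating (iii) (here $B=2A+s_0$), shows that $\hna\hi$ and $\tna\hi$ cannot both be nonzero: otherwise $\hQ=\tQ$, which via $\hQ+\tQ=A\hi^2+2e\hi$ and $\partial_i\hQ=\hY\partial_i\hi$ forces $\Psi=0$, contradicting the displayed relation. Hence near any point of $\{d\hi\neq0\}$ the function $\hi$ depends on only one factor, say $\hi=\hi(y)$, so $\tY=\tde\hi=0$; but then (ii) gives $\hY=0$, and the first display forces $\hi$ constant, contradicting $d\hi\neq0$. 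Thus $d\hi\equiv0$ and $\hi$ is the (trivially separated) constant. The two delicate points I anticipate are fixing the sign in the concircular identity (settled by the sphere model of Lemma~\ref{concr}, which gives $\hY+\hsc\hi$ rather than $\hY-\hsc\hi$) and the pointwise analysis just described; the hypotheses $\hsc+\tsc\neq0$ and $\ve\neq1$ enter exactly to force $s_0\neq0$ and $s_0+A\neq0$, which are what make the degenerate case $\hsc=\tsc$ collapse.
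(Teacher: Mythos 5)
Your proof is correct, and it splits along the same fault line as the paper's, but resolves the hard case by a genuinely different method. The shared part: from (i) and the Bochner identity (equivalently, Lemma~\ref{concr}; your direct Bochner computation even covers a flat factor uniformly, which the paper dispatches with a parenthetical remark) one gets $\,\hY\nnh+\hsc\hh\hi\,$ and $\,\tY\nnh+\tsc\hh\hi\,$ constant along the respective factors, and when $\,\hsc\ne\tsc\,$ hypothesis (ii) alone already forces the separated form, with (iii) unused -- exactly as in the paper. In the remaining case $\,\hsc=\tsc=s_0\w\ne0\,$ the paper goes extrinsic: it identifies each factor locally with the sphere or pseudosphere $\,\varSigma\subset\mathcal{V}\,$ of Sect.\,\ref{es}, uses Lemma~\ref{concr} to extend $\,\hi-\gm\,$ to a bilinear form $\,B\,$ on $\,\mathcal{V}\nh\times\mathcal{V}\nh$, and kills $\,B\,$ by polynomial algebra: (\ref{dps}), divisibility by $\,S\,$ (Remark~\ref{eulor}), separation of variables, and the bidegree decomposition of Remark~\ref{bideg}. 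You stay intrinsic: rewriting (iii) as $\,\hQ+\tQ=A\hi^2\nh+2e\hi\,$ and cross-differentiating via (\ref{dqe}-a) yields the identity $\,\Psi\hs\partial\nh_i\w\partial\nh_a\w\hi=(s_0\w\nnh+A)\hs\partial\nh_i\w\hi\,\partial\nh_a\w\hi\,$ with $\,s_0\w\nnh+A=(1-\ve)\hh s_0\w/6\ne0$, and the contraction argument on the open set where both partial gradients are nonzero gives $\,\hQ=\tQ$, hence $\,\Psi=0\,$ there, contradicting that identity; the endgame dichotomy then forces $\,d\hi\equiv0$, i.e., $\,\hi\,$ constant -- precisely the paper's conclusion $\,B=0$, $\,\hi=\gm$. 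I verified the bookkeeping ($\,\partial\nh_i\w(\hQ-\tQ)=2\Psi\hh\partial\nh_i\w\hi$, $\,\partial\nh_a\w(\hQ-\tQ)=-\nh2\Psi\hh\partial\nh_a\w\hi$, the equality of mixed partials, and the final local argument on a connected product-type neighborhood) and it all holds, with the hypotheses $\,\hsc+\tsc\ne0\,$ and $\,\ve\ne1\,$ entering exactly where you say. Your route is more elementary and self-contained, avoiding model-space embeddings and the polynomial machinery of Remarks~\ref{eulor} and~\ref{bideg}; the paper's route buys, as a by-product, the explicit normal form $\,\hi=\gm+B(y,z)\,$ of a would-be counterexample, but since that information is not used elsewhere, nothing is lost by your argument.
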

\begin{proof}By (i) Lemma~\ref{concr},
$\,\hY\nnh\nh=\hde\hi=-\hsc\hh\hi+\eta_1\w$
and $\,\tY\nnh\nh=\tde\hi=-\tsc\hh\hi+\eta_2\w$, with $\,\eta_1\w$ (or, 
$\,\eta_2\w$) constant along $\,\hM$ (or, $\,\tM$), which is clearly also true 
when $\,\hsc\hs\tsc=0$. Now (ii) gives
$\,\hi\hh(\hsc-\tsc)=2(\eta_1\w-\eta_2\w)$. If $\,\hsc\ne\tsc$, our claim 
follows. In the remaining case, 
$\,\hsc=\tsc=2c\,$ and $\,\eta_1\w=\eta_2\w=2\hh c\hh\gm\,$ for some 
real $\,c\ne0\,$ and $\,\gm$.

Locally, we may identify both $\,(\hM\nh,\hg)\,$ and $\,(\tM\nh,\tg)\,$ with
$\,(\varSigma,g)\,$ defined above, for $\,a=c^{-\nnh1}\nh$. Using 
(i) and Lemma~\ref{concr} again, we see that 
$\,\hi(y,z)-\gm\,$ has, for each fixed $\,z\in\varSigma\,$ (or,
$\,y\in\varSigma$) an extension from $\,\varSigma\,$ to a linear functional
in the variable $\,y\in\mathcal{V}\hs$ (or, $\,z\in\mathcal{V}$), and this
unique extension $\,(y,z)\mapsto B(y,z)$, being linear in $\,y$ and
$\,z\,$ separately, is bi\-lin\-e\-ar,
leading to
(\ref{dps}) for $\,\psi=\hi-\gm$, with $\,\bde\hh\psi=\hY\nnh+\tY\hs$ and 
$\,\bg(\bna\nnh\psi,\nnh\bna\nnh\psi)=\hQ+\tQ$. Now (iii) states that
\begin{equation}\label{ole}
(1-\ve)\hh c\hh[B(y,z)+2\gm\hs]B(y,z)-3[\langle Ay,Ay\rangle
+\langle A\nh^*\nnh z,A\nh^*\nnh z\rangle]-6c\gm
\end{equation}
equals $\,0\,$ for all $\,y,z\in\varSigma\subseteq S^{-\nnh1}\nh(0)$, cf.\
the beginning of this section. Remark~\ref{eulor} for 
each fixed $\,z\in\varSigma\,$ (or, $\,y\in\varSigma$) shows that, on 
$\,\mathcal{V}\nh$, (\ref{ole}) simultaneously equals
$\,\langle y,y\rangle-a\,$ times a function of $\,z$, and
$\,\langle z,z\rangle-a\,$ times a function of $\,y$. Equating the latter two
products and separating variables, we get (\ref{ole}) equal to a constant 
$\,p$ times $\,[\langle y,y\rangle-a][\langle z,z\rangle-a]$. Applying
Remark~\ref{bideg} to the components of bi\-de\-gree $\,(2,2)$, we get
$\,(1-\ve)\hh c\hh[B(y,z)]^2\nh=p\langle y,y\rangle\langle z,z\rangle\,$ for
all $\,y,z\in\mathcal{V}\nh$. If $\,p\,$ were nonzero, choosing $\,y,z\,$ with 
$\,B(y,z)=0\ne \langle y,y\rangle\langle z,z\rangle\,$ we would get a
contradiction. 
Thus, $\,p\,$ equals $\,0$, and hence so does $\,B$, making $\,\hi\,$
constant, as $\,\hi(y,z)=\gm+B(y,z)$, which again yields our assertion.
\end{proof}

\section{Surface metrics}\label{sm}
\setcounter{equation}{0}
In this section $\,(\varSigma,g)\,$ is always a Riemannian surface, 
assumed oriented whenever we mention its com\-plex-struc\-ture tensor $\,J$.
Its K\"ah\-ler form $\,g(J\hs\cdot\,,\,\cdot\,)\,$ then equals the
area form of $\,g$. The following three conditions are mutually equivalent:
\begin{itemize}
\item[\rm{(i)}] $g\,$ is, locally, a warp\-ed-prod\-uct metric,
\item[\rm{(ii)}] there exists a no\-where-zero $\,g$-Kil\-ling field $\,w\,$ on
$\,\varSigma$,
\item[\rm{(iii)}] locally, $\,(\varSigma,g)\,$ admits a con\-cir\-cu\-lar
function $\,\alpha\,$ without critical points, cf.\ (\ref{cnc}),
and then we may choose 
$\,w\,$ in (ii) to be $\,Jv$, for $\,v=\nabla\nh\hi$.
\end{itemize}
About (i) -- (ii), see Remark~\ref{wpone}. In the case of (ii) -- (iii), 
the $\,g$-Kil\-ling property of $\,w\,$ (skew-ad\-joint\-ness of 
$\,B=\nabla\nh w:T\nh M\to T\nh M$) is, for dimensional reasons, nothing else
than $\,B=\sigma\nh J\,$ for some function $\,\sigma$, which in turn amounts
to $\,g(A\hs\cdot\,,\,\cdot\,)=\sigma\hn g$ for $\,A=-J\nh B=\nabla\nh v$,
with $\,v=-Jw\,$ being locally a gradient, as $\,A\,$ is self-ad\-joint.

Assuming (i), we get, in suitable local coordinates $\,(x^1\nh,x^2)=(t,y)$,
for some function $\,\khi\,$ of $\,t$, the Gaussian curvature
$\,K\hs$ of $\,g$, the gradient $\,\nabla\nh\ay$, 
Hess\-i\-an $\,\nabla\nh d\ay$ and La\-plac\-i\-an 
$\,\Delta\ay\,$ of any function $\,\ay\,$ depending only on $\,t$, with
$\,(\,\,)^{\boldsymbol{\cdot}}=d/dt$,
\begin{equation}\label{goe}
\begin{array}{rll}
\mathrm{a)}&g=dt^2\nh+e^{2\khi}\hn dy^2\nh,\,\qquad
\mathrm{b)}\hskip6ptK\nnh=-(\ddot\khi+\dot\khi^2),\quad
&\mathrm{c)}\hskip6ptg(\nabla\nnh\ay,\nnh\nabla\nnh\ay)
=\dot\ay^2\nh,\\
\mathrm{d)}&\nabla\nh d\ay\,
=\,\ddot\ay\,dt^2\hs+\,\dot\ay\dot\khi e^{2\khi}\hn dy^2,\quad
&\mathrm{e)}\hskip6pt\Delta\hn\ay\,=\,\ddot\ay\,+\,\dot\ay\dot\khi\hh,\\
\mathrm{f)}&2\nabla\nh d\ay=(\Delta\hn\ay)g\,\mathrm{\ if\ and\
only\ if\ }\,\ddot\ay=\dot\ay\dot\khi\hh,&\\
\mathrm{g)}&K\nh\dot\ay\,=-\nh\dddot\ay\,\mathrm{\ if\
}\,2\nabla\nh d\ay=(\Delta\hn\ay)g\hh.&
\end{array}
\end{equation}
In fact, (\ref{goe}-a) -- (\ref{goe}-e) follow, since one easily verifies that
\[%
\begin{array}{l}
g_{11}\w=\,g^{11}\nh=1,\quad g_{22}\w=\,e^{2\khi}\nh,\quad g^{22}\nh
=\,e^{-2\khi}\nh,\quad g_{12}\w=g^{12}\nh=\hs0,  \\
\vg_{\hskip-2.7pt11}^1\hs=\,\vg_{\hskip-2.7pt11}^{\hs2}\hs
=\,\vg_{\hskip-2.7pt12}^1\hs=\,\vg_{\hskip-2.7pt22}^{\hs2}\hs
=\,0,\qquad\vg_{\hskip-2.7pt12}^{\hs2}
=\dot\khi,\qquad\vg_{\hskip-2.7pt22}^1=-\dot{\khi}e^{2\khi}\nh,\\
R_{121}\w{}^2=-(\ddot\khi+\dot\khi^2),\hskip8.8pt\ay_{,11}\w
=\ddot\ay,\hskip8.8pt\ay_{,12}\w=0,\hskip8.8pt\ay_{,22}\w
=\dot\ay\dot\khi e^{2\khi}\nh.
\end{array}
\]%
Now (\ref{goe}-d) gives (\ref{goe}-f) and, by (\ref{goe}-b), $\,d/dt\,$ 
applied to (\ref{goe}-f) yields (\ref{goe}-g).
\begin{rem}\label{cncir}
Let $\,\alpha\,$ be a con\-cir\-cu\-lar function on $\,(\varSigma,g)$.
Then, locally, at points where $\,d\alpha\,$ is nonzero,
$\,Y\nnh\nh=\Delta\alpha\,$ and $\,Q=g(\nabla\nh\alpha,\nnh\nabla\nh\alpha)\,$ 
are functions of $\,\alpha$. Furthermore, in the open set where $\,dK\ne0$, 
such $\,\alpha\,$ is unique up to af\-fine replacements 
($p\hh\alpha+q\,$ with constants $\,p\ne0\,$ and $\,q$).

In fact, by (iii), $\,d_w\w Y\nnh\nh=d_w\w Q=0\,$
for the Kil\-ling field $\,w=Jv$, where $\,v=\nabla\nh\hi$, while
two con\-cir\-cu\-lar functions not satisfying an af\-fine
relation lead to two linearly independent Kil\-ling fields,
making $\,K\hs$ constant.
\end{rem}
\begin{lem}\label{twops}
If\/ $\,\alpha\,$ is a con\-cir\-cu\-lar function on\/ $\,(\varSigma,g)$,
then, locally, at points where\/ $\,\nabla\nh\alpha\neq0$,
in suitable coordinates\/ $\,x^1,\,x^2$, for\/ 
$\,(\,\,)^{\boldsymbol{\cdot}}={{{\partial}/{\partial x^1}}}$ and the
Gauss\-i\-an curvature\/ $\,K\nh$ of\/ $\,g$, one has\/
$\,\partial\hh\alpha/\partial x^2\nh=0\,$ and 
\begin{equation}\label{dae}
\Delta\alpha=2\ddot\alpha\hh,\quad g(\nabla\nh\alpha,\nabla\nh\alpha)
=\dot{\alpha}^2,\quad K=-\frac{\dddot{\alpha}}{\dot{\alpha}}.
\end{equation}
\end{lem}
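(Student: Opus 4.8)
The plan is to reduce the claim to the warp\-ed-prod\-uct normal form already recorded in (\ref{goe}). Since $\,\alpha\,$ is con\-cir\-cu\-lar and $\,\nabla\nh\alpha\neq0\,$ throughout a neighborhood of the given point, condition (iii) at the start of this section holds there, with the Kil\-ling field $\,w=Jv\,$ (where $\,v=\nabla\nh\alpha$) nowhere zero. Hence $\,g\,$ is, locally, a warp\-ed-prod\-uct metric, and I would fix coordinates $\,(x^1\nh,x^2)=(t,y)\,$ as in (\ref{goe}-a), so that $\,g=dt^2\nh+e^{2\khi}\hn dy^2\,$ with $\,\khi=\khi(t)\,$ and the dot operation equal to $\,\partial/\partial x^1\nh=\partial_t$. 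In these coordinates the fibre direction is $\,\partial_y$, which is $\,g$-Kil\-ling and spans the same line as $\,w$.

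First I would verify that $\,\partial\alpha/\partial x^2\nh=0$. The base direction $\,\partial_t\,$ is $\,g$-or\-thog\-o\-nal to the fibre, spanned by $\,w$; but $\,v=\nabla\nh\alpha\,$ is also $\,g$-or\-thog\-o\-nal to $\,w=Jv$, and in dimension two the orthogonal complement of $\,w\,$ is a single line, so $\,\nabla\nh\alpha\,$ is a multiple of $\,\partial_t$. Consequently $\,\partial\alpha/\partial x^2\nh=g(\nabla\nh\alpha,\partial_y)=0$, that is, $\,\alpha\,$ depends on $\,t=x^1\,$ alone, so that the dot notation indeed applies to it and the formulas (\ref{goe}-c) -- (\ref{goe}-g), valid for functions of $\,t$, are available.

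With $\,\alpha\,$ now a function of $\,t\,$ only, the three equalities follow by specializing (\ref{goe}) to $\,\ay=\alpha$. Equation (\ref{goe}-c) gives at once $\,g(\nabla\nh\alpha,\nabla\nh\alpha)=\dot\alpha^2$. The con\-cir\-cu\-lar condition on a surface reads $\,\nabla\nh d\alpha=(\Delta\alpha/2)\hh g$, i.e.\ $\,2\nabla\nh d\alpha=(\Delta\alpha)\hh g$, which by (\ref{goe}-f) is equivalent to $\,\ddot\alpha=\dot\alpha\dot\khi$; substituting this into (\ref{goe}-e) yields $\,\Delta\alpha=\ddot\alpha+\dot\alpha\dot\khi=2\ddot\alpha$. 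Finally, since $\,2\nabla\nh d\alpha=(\Delta\alpha)\hh g\,$ holds, (\ref{goe}-g) gives $\,K\dot\alpha=-\dddot\alpha$, and dividing by $\,\dot\alpha\neq0\,$ (nonzero because $\,\nabla\nh\alpha\neq0$) produces $\,K=-\dddot\alpha/\dot\alpha$.

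The only step requiring any genuine thought is the identification of the gradient direction $\,\nabla\nh\alpha\,$ with the base direction $\,\partial_t\,$ of the warped product, which is what forces $\,\alpha=\alpha(t)$; once that is in place, the remaining equalities are immediate substitutions into the warp\-ed-prod\-uct formulas (\ref{goe}), combined with the surface form $\,2\nabla\nh d\alpha=(\Delta\alpha)\hh g\,$ of the con\-cir\-cu\-lar hypothesis.
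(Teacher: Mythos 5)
Your proposal is correct and follows essentially the same route as the paper, which simply declares the lemma ``obvious from (i)\,--\,(iii), (\ref{goe}-c), and (\ref{goe}-e)\,--\,(\ref{goe}-g) for $\,\hi=\alpha$'' -- that is, the reduction via the Kil\-ling field $\,w=J\nabla\alpha\,$ to the warp\-ed-prod\-uct normal form (\ref{goe}), exactly as you do. Your explicit check that $\,\nabla\alpha\,$ is proportional to $\,\partial_t\,$ (since both are orthogonal to $\,w$, whose span is the fibre direction), forcing $\,\partial\alpha/\partial x^2=0$, is precisely the detail the paper leaves implicit, and the rest is the same substitution into (\ref{goe}-c), (\ref{goe}-e)\,--\,(\ref{goe}-g).
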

This is obvious from (i) -- (iii), (\ref{goe}-c), and
(\ref{goe}-e) -- (\ref{goe}-g) for $\,\hi=\alpha$.
\begin{lem}\label{cgcrv}
Let $\,\Delta\alpha=2p\hh\alpha+2q\,$ and\/ 
$\,g(\nabla\nh\alpha,\nnh\nabla\nh\alpha)=p\hh\alpha^2\nh+2q\alpha+q'$ 
for a function\/ $\,\alpha\,$ on a Riemannian surface\/ 
$\,(M\nh,g)\,$ and constants\/ $\,p,q,q'\nh$. Then, wherever\/ 
$\,\nabla\nh\alpha\,$ is nonzero, $\,\nabla\nh d\alpha=(p\hh\alpha+q)g\,$ and\/
$\,g\,$ has the constant Gauss\-i\-an curvature\/ $\,K\nh=-p$.
\end{lem}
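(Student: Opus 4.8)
The plan is to extract the Hessian of $\alpha$ from the two scalar hypotheses by means of identity (\ref{dqe}-a), which relates the differential of the squared gradient norm $Q=g(\nabla\alpha,\nabla\alpha)$ to the Hessian. First I would differentiate the assumed relation $Q=p\alpha^2+2q\alpha+q'$ to get $dQ=2(p\alpha+q)\,d\alpha$. Comparing this with the expression $dQ=2[\nabla d\alpha](\nabla\alpha,\cdot)$ supplied by (\ref{dqe}-a), and using $d\alpha=g(\nabla\alpha,\cdot)$, I obtain $[\nabla d\alpha](\nabla\alpha,\cdot)=(p\alpha+q)\,g(\nabla\alpha,\cdot)$. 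Reading $\nabla d\alpha$ as a $g$-self-adjoint endomorphism, this says that, wherever $\nabla\alpha\ne0$, the vector $\nabla\alpha$ is an eigenvector with eigenvalue $p\alpha+q$.

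The second step is to determine the remaining eigenvalue from the trace. Since $\dim M=2$, the $g$-trace of $\nabla d\alpha$ is $\Delta\alpha=2p\alpha+2q$, equal to the sum of the two eigenvalues; as one of them is $p\alpha+q$, the other must also equal $p\alpha+q$. Hence the Hessian endomorphism is $(p\alpha+q)$ times the identity, that is, $\nabla d\alpha=(p\alpha+q)\,g$ on the open set $\{\nabla\alpha\ne0\}$, which is the first assertion.

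Finally, the relation just obtained exhibits $\alpha$ as a concircular function in the sense of (\ref{cnc}), with $\sigma=p\alpha+q$, an affine and therefore functional expression in $\alpha$. I would then apply Remark~\ref{rctev} with $F(t)=pt+q$, so that $F'(\alpha)=p$; its conclusion for $n=2$ yields the Gaussian curvature $K=-F'(\alpha)=-p$ at every point where $\nabla\alpha\ne0$, the constant value claimed. (Alternatively, one could feed $\nabla d\alpha=(p\alpha+q)g$ directly into the Bochner identity (\ref{bch}) applied to $v=\nabla\alpha$, as is done in Remark~\ref{rctev}.)

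I do not expect any real obstacle: the argument is essentially a single application of (\ref{dqe}-a), a trace count valid only in dimension two, and Remark~\ref{rctev}. The one point needing care is the logical order, since (\ref{dqe}-a) by itself controls only the eigenvalue in the $\nabla\alpha$ direction, and it is two-dimensionality — through the trace identity $\operatorname{tr}_g\nabla d\alpha=\Delta\alpha$ — that promotes this to full proportionality of $\nabla d\alpha$ and $g$.
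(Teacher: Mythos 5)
Your proposal is correct and follows exactly the paper's own argument: identity (\ref{dqe}-a) makes $\,\nabla\nh\alpha\,$ an eigenvector of $\,\nabla\nh d\alpha\,$ for the eigenvalue $\,p\hh\alpha+q$, the trace $\,\Delta\alpha=2p\hh\alpha+2q\,$ in dimension two then forces $\,\nabla\nh d\alpha=(p\hh\alpha+q)g$, and Remark~\ref{rctev} with $\,F(\hi)=p\hh\hi+q\,$ gives $\,K\nh=-p$. No gaps; nothing further is needed.
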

\begin{proof}
The Hessian $\hh\nabla\nh d\alpha\hs$ has the trace $\,2(p\hh\alpha+q)\,$ and,
wherever $\,\nabla\nh\alpha\ne0$, (\ref{dqe}-a)
shows that $\,\nabla\nh\alpha\,$ 
is an eigen\-vec\-tor of $\,\nabla\nh d\alpha\,$ for the eigen\-val\-ue 
$\,p\hh\alpha+q$. Thus $\,\nabla\nh d\alpha=(p\hh\alpha+q)g$, and our claim
follows from Remark \ref{rctev}.
\end{proof}

\section{Geometric realizations with Kil\-ling fields}\label{gk}
\setcounter{equation}{0}
We now show that Theorems~\ref{exatt} and~\ref{exaot} actually yield
examples of proper 
weakly Ein\-stein metrics, using Riemannian surfaces $\,(\varSigma,g)\,$ which 
admit no\-where-ze\-ro Kil\-ling fields. 
This last condition is necessary for (ii) in
Theorem~\ref{exatt} (see (ii) -- (iii) in Sect.\,\ref{sm}) but,
seemingly, not for (i).

In terms of (\ref{goe}), the assumptions of Theorem~\ref{exatt} imposed on
$\,\khi\,$ and $\,\hi\,$ are
\begin{equation}\label{cdi}
\begin{array}{rl}
\mathrm{i)}&(\ddot\hi+\dot\hi\dot\khi)\hi=2\dot\hi^2\nh
=(c+\ddot\khi+\dot\khi^2)\hi^2\mathrm{\nh\,\ due\ to\ (\ref{dqe}-b),\  in\
case\ (i),}\\
\mathrm{ii)}&\ddot\hi=\dot\hi\dot\khi\hh,\quad(c-\ddot\khi-\dot\khi^2)\hi^2\nh
+(\ddot\hi+\dot\hi\dot\khi)\hi=2\dot\hi^2\mathrm{\ \ in\ case\ (ii).}\\
\end{array}
\end{equation}
In (\ref{cdi}-i), or (\ref{cdi}-ii), using the first equality to eliminate
$\,\dot\khi$, we get
\begin{equation}\label{eli}
\begin{array}{rl}
\mathrm{i)}&\dot\khi+\ddot\hi/\dot\hi=2\dot\hi/\hi\,\mathrm{\ wherever\
}\,\hi\dot\hi\ne0\mathrm{,\ case\ (i)}\\
\mathrm{ii)}&\dot\hi=\ddot\hi/\dot\khi\,\mathrm{\ wherever\
}\,\dot\hi\ne0\,\mathrm{\ in\ case\ (ii).}\\
\end{array}
\end{equation}
Consequently, 
in the coordinates $\,(x^1\nh,x^2)=(t,y)$, 
(i) or, respectively, (ii) amounts to
the following third-or\-der ordinary differential equation, imposed on
$\,\hi\,$ alone:
\begin{equation}\label{phq}
\mathrm{i)}\hskip6pt\hi\dot\hi\nh\dddot\hi
=\,2(\hi\ddot\hi-\dot\hi^2)\ddot\hi\,+\,c\hi\dot\hi^2\mathrm{,\,\ or\ \ \ 
ii)}\hskip6pt\hi^2\nh\dddot\hi
=\,(2\hi\ddot\hi-2\dot\hi^2+c\hi^2)\dot{\hi}\hh.
\end{equation}
Solving \eqref{phq} with initial conditions such
that $\,\hi\dot\hi\ne0\,$ (which reflects Remark~\ref{noprd}),
we thus get examples of weakly Ein\-stein metrics
$\,\bg=(g+h)/\hi^2$ on $\,\varSigma\times\varPi\nh$. To evaluate the scalar
curvature $\,\bsc\,$ of $\,\bg\,$ and the spectrum of its Ein\-stein tensor
$\,\bei$, we apply 
(\ref{eee}) -- (\ref{xix}) and (\ref{goe}-b) -- (\ref{goe}-e) to
$\,\hg=g=dt^2\nh+e^{2\khi}\hn dy^2$ and
\begin{equation}\label{npq}
\begin{array}{l}
(n,p,q,\hM\nh,\tM\nh,\tg,\hsc,\tsc,\hY\nnh,\hQ,\tY\nnh,\tQ)\\
\hskip45pt=\,(4,2,2,\varSigma,\varPi\nh,h,2K,2c,\Delta\hi,
g(\nabla\nh\hi,\nnh\nabla\nh\hi),0,0),
\end{array}
\end{equation}
concluding, from (\ref{eli}) and (\ref{cdi}), that, in case (i) of
Theorem~\ref{exatt}
\begin{enumerate}
\item[{\rm(a)}] $(c-K)\hi^2\nh=\hi\Delta\hi=2\dot\hi^2$ and
$\,\nabla\nh d\hi=\ddot\hi\,dt^2\nh
-(\ddot\hi-2\dot\hi^2\nnh/\nh\hi)\hs e^{2\khi}\hn dy^2\nh$,
\item[{\rm(b)}] $\bsc=4(c\hh\hi^2\nh-\dot\hi^2)$, while 
$\,\hat\xi\hi^2\nh=-\nh2\dot\hi^2$ and $\,\tilde\xi=0$,
\item[{\rm(c)}] $\hi^2\hh\bei
=2(\hi\ddot\hi-\dot\hi^2)\hs(dt^2\nh-\hh e^{2\khi}\hn dy^2)$
along $\,\varSigma\,$ and $\,\bei=0\,$ along $\,\varPi$,
\end{enumerate}
and, similarly, in case (ii),
\begin{enumerate}
\item[{\rm(d)}] $(K\nnh\hn+\hs c)\hi^2\nh=2(\dot\hi^2\nh-\hi\ddot\hi)\,$ and
$\,\Delta\hi=2\ddot\hi$, while
$\,\nabla\nh d\hi=\ddot\hi\hs(dt^2\nh+\hh e^{2\khi}\hn dy^2)$,
\item[{\rm(e)}] $\bsc=8(\hi\ddot\hi-\dot\hi^2)$, while 
$\,\hat\xi\hi^2\nh=\dot\hi^2\nh-c\hh\hi^2\nh-2\hi\ddot\hi\,$ 
and $\,\tilde\xi=c\hh\hi^2\nh-\dot\hi^2\nh$,
\item[{\rm(f)}] $\hi^2\hh\bei\nh
=\nh(\dot\hi^2\nnh-\hn c\hi^2)(dt^2\nnh+e^{2\khi}\hn dy^2)$
along $\hs\varSigma\hs$ and $\hs\hi^2\hh\bei\nh=\nh(c\hh\hi^2\nnh
-\nh\dot\hi^2)\hh h\hs$
along $\hs\varPi\nnh$.
\end{enumerate}
In both cases, by (\ref{rco}), 
the factor distributions are $\,\bei$-or\-thog\-o\-nal.

Now, due to (v) in Theorem~\ref{exatt}, (b), (f), and (\ref{goe}-b) or,
respectively, (\ref{goe}-g), the con\-for\-mal flatness of $\,\bg=(g+h)/\hi^2$
is equivalent to 
\begin{equation}\label{cfl}
\dot\hi^2\nh=c\hh\hi^2\mathrm{\ \ in\ case\ (i),}\quad
\hi\ddot\hi=\dot\hi^2\mathrm{\ \ in\ case\ (ii).}
\end{equation}
Theorem~\ref{exatt}(iii)\hs-\hs(iv), along with (c) and (f) 
imply, in turn, that $\,\bg=(g+h)/\hi^2$ is Ein\-stein if and only if
\begin{equation}\label{eip}
\hi\ddot\hi=\dot\hi^2\mathrm{\ \ in\ case\ (i),}\quad
\dot\hi^2\nh=c\hh\hi^2\mathrm{\ \ in\ case\ (ii).}
\end{equation}
The special solutions with (\ref{cfl}) or (\ref{eip}) 
form, within the
three-di\-men\-sion\-al man\-i\-fold of solutions to \eqref{phq} having
$\,\hi\dot\hi\ne0$, submanifolds of positive co\-di\-men\-sions.

The remaining ``generic'' solutions \emph{lead to 
proper weakly Ein\-stein metrics}. 
\begin{rem}\label{hmthi}According to (c), (f) and the line following (f),
the unordered spectrum of $\,\bei\,$ consists of
$\,\pm\nh2(\hi\ddot\hi-\dot\hi^2),\hs0,\hs0\,$ in case (i), and 
$\,\pm(\dot\hi^2\nnh-\hn c\hi^2)$, each repeated twice, in case (ii).
\end{rem}
\begin{rem}\label{treun}The proper weakly Ein\-stein
$\,2+2\,$ con\-for\-mal-prod\-uct metrics $\,\bg\,$
constructed above have the form
$\,(dt^2\nh+e^{2\khi}\hn dy^2\nh+h)/\hi^2\nh$, where $\,h\,$ has the constant
Gauss\-i\-an curvature $\,c\,$ and $\,\chi,\hi\,$ are functions of $\,t$.
Thus they obviously constitute con\-for\-mal $\,3+1\,$ products, with the
factor metrics $\,e^{-\nh2\khi}\nh(dt^2\nh+h)\,$ and $\,dy^2\nh$, the
Riemannian product of which is divided by the square of
$\,\hi\hs e^{-\chi}\nh$.

If $\,c=0$, then -- as in the final clause of Remark~\ref{nnunq} --
writing, locally, $\,h=\hh d\eta^2\nh+\hh d\zeta^2\nh$, we obtain
a further $\,3+1\,$ con\-for\-mal-prod\-uct decomposition of $\,\bg$,
the factor metrics being this time 
$\,dt^2\nh+e^{2\khi}\hn dy^2\nh+\hh d\eta^2$ and $\,d\zeta^2\nh$.
\end{rem}

\section{Geometric realizations via con\-for\-mal changes}\label{gc}
\setcounter{equation}{0}
We again use (\ref{npq}) and replace $\,g\,$ by $\,\hg\,$ 
to simplify ref\-er\-ences to Sect.\,\ref{cc}.

In the case (i) of Theorem~\ref{exatt}, rather than insisting on the existence
of a nontrivial Kil\-ling field, we can also proceed by expressing $\,\hg$,
locally, as $\,\hg=e^{2\sg}\nh  g$, where $\,g=\hh dx^2\nnh+\hh dy^2$ 
is now a flat metric. By (\ref{crm}-i) with $\,\hi=\hh e^{-\sg}$ and
(\ref{ndf}-b),
\begin{equation}\label{hse}
\mathrm{i)}\hskip6pt\hsc\,=\,-\nh2\hh e^{-\nh2\sg}\nh\Delta\sg\hh,\qquad
\mathrm{ii)}\hskip6pt\hde\,=\,\hh e^{-\nh2\sg}\nh\Delta\hh.
\end{equation}
Thus, the condition (i) states that $\psi=\hi^{-1}$ is a
harmonic function without zeros,
while, due to (\ref{ndf}-b) and (\ref{hse}-i),
\begin{equation}\label{lps}
\Delta\sg\hs+\,c\hs e^{2\sg}
= \,2g(\nabla\nh\log\nh|\psi|,\nnh\nabla\nh\log\nh|\psi|)\hh.
\end{equation}
Proper weakly Einstein metrics $\,\bg=(\hg+\tg)/\hi^2$ 
will arise if one chooses $\,\psi\,$ to
be harmonic and then finds $\,\sg\,$ with 
\eqref{lps}, making sure, according to
(iii) -- (v) in Theorem~\ref{exatt}, that
\begin{equation}\label{mks}
\mathrm{the\ vector\ field\ }\,e\hn^{-\nh2\sg}\hh\nabla\nh\psi^{-1}\mathrm{\
is\ not\ con\-for\-mal,\ and\ }\,\Delta\sg\ne\hs c\hs e^{2\sg}\nnh.
\end{equation}
Let us now set $\,\psi(x,y)=e^x\nh\cos y\,$ 
in Cartesian coordinates $\,x,y$, so that $\,\psi\,$ is harmonic. We solve
(\ref{lps}) for $\,\sg\,$ assumed to be a function of $\,y$, which amounts to
the sec\-ond-or\-der ordinary differential equation 
\begin{equation}\label{ode}
\sg''(y)\,+\,c\hs e^{2\sg(y)}\,=\,2\sec^2\hskip-2pty\hh.  
\end{equation}
Note that, in the case where
\begin{equation}\label{log}
\sg'(y)=\tan y\,\mathrm{\ or,\ equivalently,\
}\,y\mapsto\sg(y)+\log|\hskip-1.4pt\cos y|\,\mathrm{\ is\ constant,}
\end{equation}
equation (\ref{ode}) fails to hold if $\,c\le0$, but does hold when
$\,c>0$, provided that the constant in (\ref{log}) is 
$\,-\hn\log\nh\sqrt{c\,}$. For $\,\zeta(y)=\sg'(y)-\tan y$, (\ref{ode}) yields
\begin{equation}\label{zpr}
\begin{array}{rl}
\mathrm{i)}&\zeta'\nh(y)=\sec^2\hskip-2pty-c\hs e^{2\sg(y)}\nh,\quad
\mathrm{ii)}\hskip6pt\zeta''\nh(y)=2\hs\zeta'\nh(y)\tan y
-2c\hs e^{2\sg(y)}\nh\zeta(y)\hh,\\
\mathrm{iii)}&\mathrm{so\ that\ }\,\zeta\,\mathrm{\ is\ nonconstant\ 
as\ long\ as\ we\ exclude\ the\ case\ (\ref{log}),}
\end{array}
\end{equation}
since constancy of $\,\zeta\,$ and (\ref{zpr}-i) would give $\,c>0\,$ and
(\ref{log}) with the constant $\,-\hn\log\nh\sqrt{c\,}$.
Solutions $\,\sg\,$ of (\ref{ode}), except those with (\ref{log}), lead to 
proper weakly Ein\-stein metrics: namely, we have (\ref{mks}). In fact,
(\ref{ode}) with $\,\sg''(y)=\Delta\sg=\hs c\hs e^{2\sg}$ would give 
(\ref{log}), 
while if $\,w=e\hn^{-\nh2\sg}\hh\nabla\nh\psi^{-1}$ were a
con\-for\-mal
vector field, the component equality $\,w\hn_{1,2}\w+w\hn_{2,1}\w=0\,$ 
for its covariant derivative, in the coordinates $\,x,y$, would
read $\,2\hh[\sg'(y)-\tan y]\hs e^{-x-2\sg(y)}\nh\sec y=0$, again implying
(\ref{log}).

The surface metrics $\,g\,$ obtained here still admit nontrivial Kil\-ling
fields, provided by the coordinate vector field $\,\partial/\partial x$,
since the con\-for\-mal factor, multiplied by the standard flat metric so
as to yield $\,g$, does not depend on $\,x$. However, in contrast with the
examples described in Sect.\,\ref{gk}, $\,\hi=\psi^{-\nnh1}$ and
he con\-for\-mal factor are now functionally independent.

As $\,\hi=\hh e^{-x}\nh\sec y$, using subscripts for partial
derivatives we get
\begin{enumerate}
\item[{\rm(a)}] 
$(\hi\nh_x\w,\hi\nh_y\w,\hi\nh_{xx}\w,\hi\nh_{xy}\w,
\hi\nh_{yy}\w)=\hi\hh(-\nnh1,\tan y,1,-\tan y,1+2\tan^2\hskip-2pty)$.
\end{enumerate}
With the notation of
Remark~\ref{denot}, (a) and (\ref{hse}-ii) give 
$\,\hde\hh\hi=2\hh e^{-x-2\sg(y)}\nh\sec^3\hskip-2pty$, 
so that, by (\ref{ndf}-a) with $\,\chi=\hh e^{-\sg}\nh$, (\ref{eee}-a) and
(\ref{xix}),
\begin{enumerate}
\item[{\rm(b)}] $\hna\hn d\hi=\nabla\nh d\hi-d\hh\sg\otimes\hh d\hi
-d\hi\otimes\hh d\hh\sg+g(\nabla\nh\sg,\nabla\nnh\hi)\hh g$,
\item[{\rm(c)}] $\bsc
=4(c-\hh e^{-\nh2\sg(y)}\nh\sec^2\hskip-2pty)\hs
e^{-\nh2x}\nh\sec^2\hskip-2pty$,
\item[{\rm(d)}] $\hat\xi=-\nh2\hh e^{-\nh2\sg(y)}\nh\sec^2\hskip-2pty\,$
and $\,\tilde\xi=0$.
\end{enumerate}
Due to 
(a) and (b), $\,\hna\hn d\hi$, in the coordinates $\,x,y$, equals
$\,\hi\,$ times the symmetric matrix with the diagonal
$\,(1+\sg'(y)\tan y,1+2\tan^2\hskip-2pty-\sg'(y)\tan y)\,$ and the
off-di\-ag\-o\-nal entry $\,\sg'(y)-\tan y$. In view of (d), (\ref{eee}-b) and
(\ref{eee}-c), the analogous data for
$\,\bei/2\,$ along $\,\hM\,$ are: diagonal
$\,(\sg'(y)\tan y-\tan^2\hskip-2pty,
\tan^2\hskip-2pty-\sg'(y)\tan y)$, off-di\-ag\-o\-nal $\,\sg'(y)-\tan y$,
while $\,\bei=0\,$ along $\,\tM\nh$. As the matrix of $\,\bei/2\,$ along $\,\hM\,$
is trace\-less, with the determinant 
$\,-[\sg'(y)-\tan y]^2\nh\sec^2\hskip-2pty$, the $\,g$-spec\-trum of
$\,\bei/2\,$ along $\,\hM$ consists of
$\,\pm[\sg'(y)-\tan y]\sec y$.
\begin{rem}\label{sglbd}According to the preceding two sentences and (c),
the lo\-cal-homo\-thety invariant (\ref{lhi}-i), for the metrics $\,\bg\,$
constructed above, has $\,\pm\lambda\,$ equal to 
$\,2[\sg'(y)-\tan y]\hh e^{-2x-2\sg(y)}\nh\sec^3\hskip-2pty\,$ and 
$\,\bsc=4(c-\hh e^{-\nh2\sg(y)}\nh\sec^2\hskip-2pty)\hs
e^{-\nh2x}\nh\sec^2\hskip-2pty$, so that, by
(\ref{zpr}-i), $\,\bsc=-4\hh\zeta'\hn e^{-\nh2\sg}\hn\hi^2$
and $\,\pm\lambda=2\hh\zeta\hh e^{-\nh2\sg}\hn\hi^2\nh\sec y$. Thus,
due to (\ref{zpr}-iii), $\,\bsc\hh\lambda\ne0$. Next, $\,\bsc\,$ \emph{and\/ 
$\,\lambda\,$ are functionally independent}, which follows since the
ratio $\,\bsc/\nnh\lambda=\mp\hh2\hh\zeta^{-\nnh1}\nh\zeta'\nnh\cos y$, a
function of $\,y$, is nonconstant (allowing us to express $\,y$, and then also
$\,x$, in terms of $\,\bsc\,$ and $\,\lambda$). Namely, if 
$\,2p=\zeta^{-\nnh1}\nh\zeta'\nnh\cos y\,$ were constant, with $\,p\ne0\,$ 
by (\ref{zpr}-iii), 
differentiating the equality $\,\zeta'\nh=2p\hs\zeta\nh\sec y\,$ we would
obtain 
$\,\zeta''\nh=4p^2\zeta\nnh\sec^2\hskip-2pty+2p\hs\zeta\nh\sec y\tan y\,$
while, at the
same time, from (\ref{zpr}-ii),
$\,\zeta''\nh=4p\hs\zeta\nh\sec y\tan y-2c\hs e^{2\sg}\hn\zeta$. Equating the
two expressions for $\,\zeta''$ we get
$\,c\hs e^{2\sg}\nh=p\sec y\tan y-2p^2\nh\sec^2\hskip-2pty$. Thus, 
(\ref{zpr}-i) reads $\,\zeta'\nh
=(2p^2\nh+1)\sec^2\hskip-2pty-p\sec y\tan y$, and so $\,\zeta\,$ equals 
$\,(2p^2\nh+1)\tan y-p\hh\sec y\,$ plus a constant. As
$\,\zeta'\nh=2p\hs\zeta\nh\sec y$, it now follows that
$\,4p^2\nh+1-p(4p^2\nh+3)\sin y\,$ is equal to a constant times $\,\cos y$,
which is the required contradiction.
\end{rem}

\section{Clas\-si\-fi\-ca\-tion-type theorems}\label{cl}
\begin{thm}\label{cpsfm}
Any weakly Ein\-stein metric in di\-men\-sion four, con\-for\-mal to a product 
of surface metrics, arises, locally at points where\/
$\,\ein\ne0\,$ and\/ $\,W\nnh\nh\ne\hs0$, from one of the constructions
described in Theorem\/~{\rm\ref{exatt}}.
\end{thm}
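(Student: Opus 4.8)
The plan is to work on a connected set of generic points (Remark~\ref{genrc}) where $\ein\ne0$ and $W\ne0$, writing the metric as $g=\bg/\hi^2$ with $\bg=\hg+\tg$ the given product of surfaces, so that case~(\ref{dim}-a) of Section~\ref{cw} is in force. Since $W\ne0$, the common simple eigen\-val\-ue $\sigma$ of $W^\pm$ is nonzero, and Lemma~\ref{smloc}(i) identifies the two summand planes with the factor tangent spaces $T\hM$ and $T\tM$. The first step is to classify the position of $\ein$ relative to these planes: applying Remark~\ref{cvrsl} pointwise and invoking Lemma~\ref{othzr} together with Remark~\ref{eight}, I would obtain the tri\-chot\-o\-my that either (I) $\ein$ is a nonzero multiple of $g$ on each factor plane (case~(\ref{nbs}-i), $\sigma=-\sca/12$), or (II) $\ein$ is $g$-trace\-less and nonzero on each factor plane ($\sigma=\sca/6$), or (III) $\ein$ vanishes on each factor plane while its mixed part does not ($\sigma=-\sca/3$); in (II) and (III) the eigen\-frame of $\ein$ is rotated relative to the factor frame, so the ``off-di\-ag\-o\-nal'' Weyl types of (\ref{chb}) are allowed.

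Next I would convert each alternative into equations for $\hi$ through (\ref{eee}-b) -- (\ref{eee}-c), in which the surface Ein\-stein tensors vanish, together with (\ref{rco}) and (\ref{xix}). In (I) and (III) the trace\-less part of $\ein$ along each factor vanishes, so $\hna\nh d\hi$ and $\tna\nh d\hi$ are functional multiples of $\hg$ and $\tg$ -- i.e.\ $\hi$ is con\-cir\-cu\-lar along each factor -- and by (\ref{rco}) the variables separate, $\hi=\ha+\ta$, in (I) but not in (III); in (II) one gets a nonzero trace\-less Hessian along each factor, with separated variables. In each alternative the scalar identity from (\ref{eee}-a), with the value $\sca=-12\sigma,6\sigma,-3\sigma$ forced by $\sigma$, reads $\sca=\ve\hs\hi^2(\hsc+\tsc)$, i.e.\ is condition~(iii) of Lemma~\ref{bothc} with $\ve=-2,1,-\tfrac12$ in cases (I), (II), (III) respectively.

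Case~(III) is then excluded. There $\ein$ vanishes on each plane, which yields condition~(ii) of Lemma~\ref{bothc}, while con\-cir\-cu\-lar\-i\-ty gives~(i); after checking, via Remark~\ref{cncir} and Lemma~\ref{cgcrv}, that each factor has constant Gauss\-i\-an curvature, Lemma~\ref{bothc} applies (its $\ve=-\tfrac12\ne1$) and forces $\hi$ to have separated variables, contradicting the nonzero mixed part. In cases (I) and (II) condition~(ii) fails or the hypothesis $\ve\ne1$ fails, so Lemma~\ref{bothc} is not available, and the remaining, decisive step is to show that $\hi$ is a function of a single factor $\varSigma$ and that the complementary factor $\varPi$ has constant Gauss\-i\-an curvature $c$. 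Granting this, the scalar identity pins $\tsc$ to the constant $2c$ and the conditions become exactly those of Theorem~\ref{exatt}: in (I), $2\hna\nh d\hi=(\hde\hi)\hg$ with $(K\nnh+c)\hi^2+\hi\hde\hi=2\hg(\hna\hi,\hna\hi)$, which is alternative~(ii); in (II), where $\ein$ must then vanish along $\varPi$, $\hde\hi^{-1}=0$ with $2\hg(\hna\hi,\hna\hi)=(c-K)\hi^2$, which is alternative~(i). The spectra recorded above then give the lo\-cal-homo\-thety invariants (\ref{lhi}).

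The main obstacle is this last reduction, where the purely point\-wise weakly Ein\-stein condition must be integrated, and it rests on the theory of con\-cir\-cu\-lar functions on constant-curvature surfaces developed in Section~\ref{es}. Concretely, in case~(I), once both factors are taken of constant curvature, Lemma~\ref{concr} represents $\ha,\ta$ by linear functionals, whereupon the scalar identity becomes a polynomial identity in the two independent variables $\ha,\ta$; comparing the coefficients of $\ha^2$ and $\ta^2$ -- the bi\-de\-gree argument of Remark~\ref{bideg} -- forces $\hsc+\tsc=0$ unless one of $\ha,\ta$ is constant, and $\hsc+\tsc=0$ means $\sigma=0$, i.e.\ $W=0$, which is excluded; a factor of non\-con\-stant curvature is handled by the uniqueness, up to af\-fine changes, of con\-cir\-cu\-lar functions (Remark~\ref{cncir}). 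Case~(II) is analogous, using that its scalar identity is precisely the $\bg$-har\-mon\-ic\-i\-ty of $\hi^{-1}$ (by (\ref{dqe}-b)) which, combined with condition~(ii), forces one trace\-less Hessian to vanish; throughout, the con\-for\-mal\-ly flat locus $K=-c$ and the Ein\-stein locus $\ein=0$ are the degenerations that must be, and are, kept outside the generic set.
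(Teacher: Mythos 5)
Your architecture coincides with the paper's: your trichotomy (I)/(II)/(III) is exactly Lemma~\ref{smeps}, with the locally constant $\,\ve\in\{-\nh2,1,-\frac12\}\,$ in $\,\sca=\ve\hn\hi^2(\hs\hsc+\tsc\hs)\,$ matching your three values of $\,\sigma$; case (III) is the paper's $\,\ve=-\frac12$, excluded by Lemma~\ref{third}; and the endgame -- one of $\,\ha,\ta\,$ constant, $\,\tsc\,$ constant by separation of variables, then the two alternatives of Theorem~\ref{exatt} via (\ref{dqe}-b) -- is the paper's proof of Theorem~\ref{cpsfm} verbatim. The problem is that you compress the two steps carrying essentially all the analytic weight into assertions. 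In case (III) you claim constancy of both Gaussian curvatures can be ``checked via Remark~\ref{cncir} and Lemma~\ref{cgcrv}.'' It cannot be done that directly: there $\,\hi\,$ has the multiplicative form $\,\hi=\ha\ta+\gm\,$ rather than separated variables, so the identities of Lemma~\ref{smeps}(b),(c) couple the two factors, while Lemma~\ref{cgcrv} needs $\,\Delta\alpha\,$ affine and $\,g(\nabla\nh\alpha,\nabla\nh\alpha)\,$ quadratic in $\,\alpha\,$ \emph{with constant coefficients} -- and producing those constant coefficients is precisely the difficulty. The paper spends all of Lemma~\ref{prdwz}(ii) (the successive differentiations of (\ref{ddd}), including the nontrivial step $\,\gm=0$) just to get $\,d\hs\hsc\otimes d\hs\tsc=0$, and then a further ODE argument in Sect.~\ref{pm}, (\ref{tdb})--(\ref{add}), to exclude the subcase $\,\tsc\,$ constant, $\,d\hs\hsc\ne0$; only after both of these does Lemma~\ref{bothc} apply and give the contradiction you describe.

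The same gap recurs in (I)/(II): that $\,\hi\,$ depends on a single factor is the content of Lemma~\ref{prdwz}(i), but you run your bidegree comparison only ``once both factors are taken of constant curvature,'' disposing of nonconstant curvature by citing the affine-uniqueness clause of Remark~\ref{cncir}, which yields no constant-coefficient relations and is not a substitute. Moreover, your ``case (II) is analogous'' transfer fails structurally: in (II) ($\ve=1$) Lemma~\ref{smeps}(d) is unavailable, $\,\hi\,$ is \emph{not} con\-cir\-cu\-lar along the factors, so Lemma~\ref{concr} and the linear-functional representation cannot even be invoked; the paper instead separates variables in Lemma~\ref{smeps}(b)--(c) to force $\,\hsc,\hY\nnh,\hQ\,$ affine/quadratic with constant coefficients and only then uses Lemma~\ref{cgcrv} to get $\,\hsc+\tsc=0$, i.e.\ $\,W\nnh\nh=\hs0\,$ by (\ref{wpm}), forcing one of $\,d\ha,d\ta\,$ to vanish. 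Conversely, in (I) ($\ve=-\nh2$) Lemma~\ref{smeps}(c) is unavailable, and the paper handles this case by the differentiation cascade following (\ref{apb}). A repairable version of your idea does exist for (I) -- writing $\,\hY\nnh=F(\ha)$, $\,\tY\nnh=\tilde F(\ta)\,$ and cross-differentiating Lemma~\ref{smeps}(b) gives $\,F''+\tilde F''=0$, after which the cubic coefficient kills $\,F''\,$ and yields $\,\hsc+\tsc=0\,$ -- but no such derivation appears in your text, and your statement that harmonicity of $\,\hi^{-1}$ ``combined with condition (ii) forces one traceless Hessian to vanish'' asserts the desired conclusion rather than proving it. As written, the proposal reproduces the paper's skeleton correctly but leaves its two hardest lemmas (Lemmas~\ref{prdwz} and~\ref{third}) unproved.
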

To provide some context for the next theorem, let us recall two facts, one
stated as 
(\ref{wpm}) -- (\ref{wij}), the other consisting of Remark~\ref{cvrsl} 
and (\ref{bts}-a).  They refer to what happens 
at every point $\,x\,$ of an oriented con\-for\-mal-prod\-uct Riemannian
four-man\-ifold with the metric $\,\bg$, scalar curvature $\,\bsc$, and 
Weyl tensor $\,\overline W\nnh$. First,
\begin{equation}\label{sms}
\mathrm{both\ }\,\overline W^\pm\nnh\mathrm{\ have\ the\ same\ unordered\
spectrum\ at\ }\,x\hh.
\end{equation}
Secondly, \emph{if\/ $\,\bg\,$ is weakly Ein\-stein, while 
$\,\overline W^+\hskip-4pt$ is nonzero and has a repeated eigen\-valu\-e at\/
$\,x$, then, at\/} $\,x$,
\begin{equation}\label{uns}
\mathrm{the\ unique\ simple\ eigen\-val\-ue\ of\
}\,\overline W^+\nnh\nh\mathrm{\ equals\nh\ }-\nnh\nh\bsc/3\mathrm{,\ or\nh\ 
}-\nnh\nh\bsc/\nh12\mathrm{,\ or\ }\,\hs\bsc/6\hh.
\end{equation}
\begin{thm}\label{cptho}Let\/ $\,\bg\,$ 
be a weakly Ein\-stein\/ $\,3\hh+\nh1\,$ con\-for\-mal-prod\-uct metric on\/
$\,M\nnh\times\hn I\nh$, 
where\/ $\,I\subseteq\bbR\,$ is an open interval. If, for a fixed local
orientation and a point\/ $\,x\in M\nnh\times\hn I\nh$, either
\begin{enumerate}
\item[{\rm(i)}]$\overline W^+\nnh\hn$ has three distinct eigen\-valu\-es at\/
$\,x$, or
\item[{\rm(ii)}]$\overline W^+\hskip-4pt\ne\hs0\,$ at\/ $\,x\,$ and\/
$\,\overline W^+\nnh$ has a repeated eigen\-valu\-e at all points of some
neighborhood of\/ $\,x$, with its sim\-ple-eigen\-val\-ue function equal to\/
$\,-\bsc/\nh12$,
\end{enumerate}
then\/ $\,\bg\,$ arises near\/ $\,x\,$ as in Theorem\/~{\rm\ref{exaot}}, from
some data\/ $\,g,v,\hi,\kp$. In both cases,
\begin{equation}\label{evs}
\mathrm{at\ every\ point,\ }\,-\nnh\bsc/\nh12\,\mathrm{\ is\ an\
eigenvalue\ of\ }\,\overline W^\pm\nnh\nh.
\end{equation}
\end{thm}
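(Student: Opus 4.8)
The plan is to reverse the computation of Section~\ref{px}, recovering the data $\,g,v,\hi,\kp\,$ of Theorem~\ref{exaot} from the given $\,3+1\,$ con\-for\-mal product $\,\bg=(g+d\vt^2)/\hi^2\,$ on $\,M\times I\nh$. I would work near a generic point (Remark~\ref{genrc}), assuming $\,\bei\ne0\,$ and $\,\overline W\ne0$, and fix the adapted positive $\,\bg$-or\-tho\-nor\-mal frame $\,v_1,\dots,v_4\,$ of Section~\ref{cw} with $\,v_l\,$ tangent to $\,I\,$ (so $\,v_l\parallel\partial_\vt$) and $\,v_i,v_j,v_k\,$ tangent to $\,M\,$ di\-ag\-o\-nal\-izing $\,\ein\,$ of $\,g$. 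Then (\ref{wij})--(\ref{spr}) describe $\,\overline W^\pm\,$ completely: in the aligned bi\-vec\-tor bases built from this frame, both have the same ordered spectrum $\,\frac{\hi^2}2(-\kp_k,-\kp_j,-\kp_i)$, which is (\ref{sms}).

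First I would pin down the algebraic type. In case~(i), $\,\overline W^\pm\,$ have three distinct eigen\-val\-ues, so by Remark~\ref{dfeig} a Sing\-er--Thorpe frame $\,u_1,\dots,u_4\,$ realizing Theorem~\ref{weact} arises from $\,v_1,\dots,v_4\,$ by a transformation of type (\ref{mod}), and the ordered spectra of $\,\overline W^+\,$ and $\,\overline W^-$ --- equal in the $\,v$-frame --- differ in the $\,u$-frame by one and the same permutation, hence stay equal. Equality of the ordered $\,\pm\,$ spectra in (\ref{ari}) forces $\,c_2=c_3=c_4=0$, which excludes (\ref{ari}-a) (giving $\,\overline W=0$) and (\ref{ari}-b) (giving a repeated eigen\-val\-ue), leaving (\ref{ari}-c). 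In case~(ii) the sim\-ple-eigen\-val\-ue condition $\,-\bsc/12\,$ singles out, via Remark~\ref{cvrsl} and (\ref{bts}), exactly the line (\ref{chb}-i)/(\ref{nbs}-i), again of type (\ref{ari}-c) with $\,\xi=\bsc/8$, whose summand planes are the eigen\-spaces of $\,\bei\,$ (Lemma~\ref{othzr}). In both cases $\,\bei\,$ has the spectrum $\,(-\lambda,-\lambda,\lambda,\lambda)$, the eigen\-val\-ue $\,-\bsc/12\,$ of $\,\overline W^\pm\,$ is exhibited, and $\,\partial_\vt\,$ is an eigen\-vec\-tor of $\,\bei$ --- being one of the $\,u_a\,$ (case~(i), by the frame alignment) or lying in the summand-plane eigen\-space $\,\mathrm{span}(u_k,\partial_\vt)\,$ (case~(ii), Lemma~\ref{smloc}). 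The assertion (\ref{evs}) then extends to all points by continuity of the spectrum.

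The hard part will be to show that $\,\hi\,$ is, locally, a function on $\,M\,$ alone. Since the product metric $\,g+d\vt^2\,$ has no mixed Chris\-tof\-fel symbols, $\,\bei(\partial_\vt,X)=2\hi^{-1}X(\partial_\vt\hi)\,$ for $\,X\,$ tangent to $\,M$; as $\,\partial_\vt\,$ is a $\,\bei$-eigen\-vec\-tor, this vanishes, so $\,\partial_\vt\hi\,$ is constant along each $\,M$-slice and $\,\hi=\ha+\ta\,$ with $\,\ha\,$ on $\,M\,$ and $\,\ta\,$ on $\,I$. To force $\,\ta\,$ constant I would use that the spectral type of $\,\bei\,$ persists for every $\,\vt$: by (\ref{eee}-b) the $\,g$-trace\-less part of $\,\bei\,$ along $\,M\,$ equals $\,\ein+\frac2{\ha+\ta}\bigl(\hna\nh d\ha-\tfrac13(\hde\ha)g\bigr)$, which must keep two of its three eigen\-val\-ues equal as $\,\vt\,$ varies. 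In case~(i) the eigen\-val\-ues of $\,\ein\,$ are distinct, so two of the shifted eigen\-val\-ues can coincide for every value of the $\,\vt$-de\-pend\-ent factor $\,2/(\ha+\ta)\,$ only if that factor is constant, i.e.\ $\,\ta'=0$. In case~(ii) I would instead combine the two eigen\-val\-ue equations on the plane $\,\mathrm{span}(u_k,\partial_\vt)\,$ (common eigen\-val\-ue $\,\lambda$) supplied by (\ref{eee}-b)--(\ref{eee}-c), together with the neighborhood hypothesis, to reach $\,\ta'=0$ again; this case is the main obstacle.

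Finally, with $\,\hi=\ha\,$ on $\,M$, I would read off the remaining data. Putting $\,\sym=2\nabla\nh d\ha+\ha\hh\ric$, the identity $\,\sym=\ha\hh\bei|_M+\rho\hh g\,$ with $\,\rho=\tfrac12\hde\ha+\tfrac14\sca\ha\,$ (from (\ref{eee}-b),(\ref{eee}-d)), combined with the value $\,\tilde\xi\ha^2=-\ha\rho\,$ of the $\,\partial_\vt$-eigen\-val\-ue from (\ref{eee}-c),(\ref{eee}-e), shows that $\,\sym\,$ has spectrum $\,(-|v|^2,-|v|^2,0)$, the null direction being the simple eigen\-di\-rec\-tion of $\,\bei|_M\,$ --- an eigen\-di\-rec\-tion of $\,\ein$. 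Defining $\,v\,$ along it (with $\,|v|^2\,$ read from $\,\sym$) and $\,\kp\,$ as the corresponding eigen\-val\-ue of $\,\ein\,$ gives (\ref{spc}-i), (\ref{spc}-ii), (\ref{spc}-iv); and (\ref{spc}-iii), equivalent to $\,\bsc=6\ha^2\kp\,$ as in Section~\ref{px}, follows by matching the Weyl eigen\-val\-ue $\,-\bsc/12=\frac{\hi^2}2(-\kp)\,$ of (\ref{wij}). Theorem~\ref{exaot} applied to $\,g,v,\hi,\kp\,$ then reproduces $\,\bg\,$ near $\,x$.
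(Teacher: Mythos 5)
Your overall strategy coincides with the paper's own proof in Sect.\,\ref{cg}: the same two frames (a Sing\-er--Thorpe frame $\,u_1\w,\dots,u_4\w$ from Theorem~\ref{weact} and a product-adapted frame $\,v\nh_1\w,\dots,v\nh_4\w$ di\-ag\-o\-nal\-izing $\,\ein$), aligned via Remark~\ref{dfeig} and (\ref{mod}) -- this is the paper's (\ref{eao}); the same identification of the algebraic type (\ref{ari}-c) with $\,c_2\w=c_3\w=c_4\w=0\,$ and of $\,-\bsc/\nh12\,$ as the eigen\-val\-ue carried by the bi\-vec\-tors pairing the two $\,\bei$-eigen\-planes; the same passage to $\,\hi=\ha+\ta\,$ through $\,\bei$-or\-thog\-o\-nal\-ity of the factors and (\ref{rco}); and the same final extraction of $\,v,\kp\,$ from $\,\hi\hs\bei=\sym+\psi\hh g\,$ with $\,\mathrm{tr}_g\w\sym=-\nh4\psi\,$ read off the $\,I$-direction eigen\-value, plus the matching $\,\bsc=6\hh\kp\hn\hi^2$ from (\ref{wsp}). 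Your case-(i) route to $\,\ta'=0\,$ (the pencil $\,\ein+s\hh B_0\,$ with distinct $\,\kp\nnh_i\w$) is equivalent to the paper's subtraction of the $\,i=1\,$ and $\,i=2\,$ versions of (\ref{fse}-a), and is sound, granted -- as you implicitly use -- that the $\,v\nh_i\w$ can be taken $\,t$-in\-de\-pend\-ent and simultaneously di\-ag\-o\-nal\-ize $\,\ein\,$ and $\,\nabla\nh d\ha$, which follows from the distinctness of the $\,\kp\nnh_i\w$ and the frame alignment.

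The genuine gap is exactly where you flag it: constancy of $\,\ta\,$ in case (ii). Your proposed combination of the eigen\-value equations along $\,\mathrm{span}\hs(v_3\w,\partial_\vt)\,$ -- the paper's (\ref{fse}-b) and (\ref{fse}-c), or (\ref{fse}-a) with (\ref{fse}-b) -- does not by itself yield $\,\dot\beta=0$; after applying $\,d/dt\,$ and separating variables it only gives $\,\dddot\beta=p\hs\dot\beta\,$ with $\,2p=\kp+\sca/3\,$ constant wherever $\,\dot\beta\ne0$, which is perfectly consistent with nonconstant $\,\beta$. The paper closes the case by a further, essential step: integrating to $\,\ddot\beta=p\beta+q\,$ and $\,\dot\beta^2\nh=p\beta^2\nh+2q\beta+q_1\w$, then substituting these into the scalar-curvature identity (\ref{enw}-a) together with the case-(ii) relation $\,\bsc=6\hh\kp\hn\hi^2$ (which comes from the hypothesis that the simple eigen\-value is $\,-\bsc/\nh12\,$ combined with (\ref{wsp})). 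This produces a quadratic equation in $\,\beta\,$ with coefficients constant along $\,I\nh$; since $\,\beta\,$ is nonconstant, its leading coefficient $\,\sca-6\kp+6p-12p=-\hh9\kp\,$ must vanish, so $\,\kp=0$, forcing $\,\overline W\nnh\nh=\hs0\,$ by (\ref{wsp}) and $\,\kp_1\w=\kp_2\w=-\kp/2$, contradicting $\,\overline W^+\hskip-4pt\ne\hs0$. Without this second stage -- the quadratic-in-$\,\beta\,$ argument fed by (\ref{enw}-a) and (\ref{sei}-a) -- your proof establishes the theorem only in case (i), and the ``main obstacle'' you name remains open.
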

Under the hypothesis preceding (i) in Theorem~\ref{cptho}, at points 
with $\,\overline W^+\hskip-5pt\ne0$ that are  
generic relative to the spectrum of $\,\overline W^+\nnh\nnh$, in the
sense of Remark~\ref{genrc}, one must -- due to (\ref{uns}) -- have either
(i), or a version of (ii) in which the simple eigen\-val\-ue
$\,-\nnh\nh\bsc/\nh12\,$ is replaced, possibly, 
by $\,-\nnh\nh\bsc/3\,$ or $\,\bsc/6$.

Using a different (and more standard) notion of genericity, one may thus
say that Theorem~\ref{cptho} covers the generic situation, in (i), as well as
``one-third'' -- represented by (ii) -- of the nongeneric case.

\section{Three lemmas needed for Theorem~\ref{cpsfm}}\label{tl}
\setcounter{equation}{0}
Whenever $\,(\hM\times\tM\nh,\hg+\tg)\,$ is
a Riemannian product of oriented surfaces 
and $\,g=(\hs\hg+\tg\hs)\hs/\hi^2$ on $\,\hM\times\tM\nh$,
where $\,\hi:\hM\times\tM\to\bbR\smallsetminus\{0\}$, (\ref{eee}) --
(\ref{xix}) for $\,(n,p,q)=(4,2,2)\,$ give, for the Ein\-stein tensor 
$\,\ein=\ric-\sca\hs g/4$,
\begin{equation}\label{rwr}
\begin{array}{rl}
\mathrm{i)}&4\ein=8\hi^{-\nh1}\hs\hna\nh d\hi+
\hi\hs[\hi(\hsc-\tsc)-2(\hY\nnh+\tY)]\hs g\hskip10pt\mathrm{\ along\ }\,\hM,\\
\mathrm{ii)}&4\ein=8\hi^{-\nh1}\hs\tna\nh d\hi+
\hi\hs[\hi(\tsc-\hsc)-2(\hY\nnh+\tY)]\hs g\hskip10pt\mathrm{\ along\ }\,\tM,\\
\mathrm{iii)}&\sca\hh=\hh\hi^2(\hsc+\tsc\hs)+6\hi(\hY\nnh+\tY)\hh
-12(\hQ+\tQ)\mathrm{,\hskip12ptwith\ (\ref{cnv}).}\\
\end{array}
\end{equation}
We will repeatedly assume that, for $\,\hM\nh,\tM\nh,\hg,\tg\,$ 
and $\,\hi\,$ as above,
\begin{equation}\label{asm}
\begin{array}{l}
g=(\hs\hg+\tg\hs)\hs/\hi^2\nh\mathrm{\hn\ is\hn\ a\hn\ weakly\hn\
Ein\-stein\hn\ metric\hn\ on\ the\hn\ oriented}\\
\mathrm{four}\hyp\mathrm{manifold\
}\,\,M\hh=\hs\hM\times\tM\nh\mathrm{,\ where\ }\,\dim\hM\hn=\hs\dim\tM\nh=2\hh.
\end{array}
\end{equation}
As before, $\,\sca\,$ is the scalar curvature, and 
$\,\ric,\ein,W\nnh$ the Ric\-ci, Einstein and Weyl 
tensors of the metric $\,g$, the obvious modified versions of these symbols 
denote their analogs for $\,\hg\,$ and $\,\tg$, while 
$\,\hY\nnh,\tY\nnh,\hQ,\tQ\,$ equal
$\,\hde\hs\hi,\,\tde\hs\hi,\,\hg(\hna\nh\hi,\nnh\hna\nh\hi)\,$ and
$\,\tg(\tna\nh\hi,\nnh\tna\nh\hi)$.
\begin{lem}\label{smeps}Under the assumption\/ {\rm(\ref{asm})}, in every
connected component\/ $\,\,U$ of the open set in\/ $\,M\,$ where\/
$\,\ein\ne0\,$ and\/ $\,W\nnh\nh\ne\hs0$, for some constant\/ 
$\,\ve\in\{-\nh2,-\frac12,1\}$,
\begin{enumerate}
\item[{\rm(a)}]$\sca\,=\,\ve\hn\hi^2(\hs\hsc+\tsc\hs)\,$ and\/
$\,\hsc+\tsc\ne0\,$ everywhere in\/ $\,U\nh$,
\item[{\rm(b)}]$(1-\ve)\hh\hi^2(\hs\hsc+\tsc\hs)+6\hi\hh(\hY\nnh+\tY)
=12(\hQ+\tQ)$,
\item[{\rm(c)}]$2(\hY\nnh\nh-\tY)\,=\,\hi\hh(\tsc-\hsc)\,$
if\/ $\,\ve\in\{-\frac12,1\}$,
\item[{\rm(d)}]$\hna\nh d\hi\,$ and\/ $\,\tna\nh d\hi\,$ are functional 
multiples of\/ $\,\hg\,$ and\/ $\,\tg\,\hs$ if\/ $\,\ve\in\{-\nh2,-\frac12\}$,
\item[{\rm(e)}]$\ve\in\{-\nh2,1\}\,$ if and only if\/
$\,T\nh\hM\,$ and\/ $\,T\nh\tM\,$ are\/ $\,\ein$-or\-thog\-o\-nal,
\item[{\rm(f)}]$\ve=-\frac12\,$ if and only if\/ $\,\ein\,$ restricted to\/ 
both\/ $\,T\nh\hM\,$ and\/ $\,T\nh\tM\,$ yields\/ $\,0$.
\end{enumerate}
\end{lem}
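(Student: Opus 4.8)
The plan is to read off everything from three facts already in place: the product structure forces a Kähler-type spectrum for $\,W^\pm$ through (\ref{wpm}), the weakly-Einstein classification of Remark~\ref{cvrsl} pins down the possible simple eigenvalues, and Lemma~\ref{othzr} together with Remark~\ref{eight} records how $\,\ein\,$ sits relative to the summand planes. First I would settle (a). On $\,U\,$ the metric is weakly Einstein and, as $\,\ein\ne0$, non-Einstein; by (\ref{wpm}) both $\,W^\pm$ carry the unordered spectrum $\,\{\sigma,-\sigma/2,-\sigma/2\}\,$ with $\,6\sigma=\hi^2(\hsc+\tsc)$, and $\,W\ne0\,$ forces $\,\sigma\ne0$, whence $\,\hsc+\tsc\ne0\,$ on $\,U$. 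With a repeated eigenvalue and both $\,W^\pm$ nonzero, Remark~\ref{cvrsl} places each point in one of the nine lines of (\ref{chb}), where every $\,W^\pm$-eigenvalue is a fixed multiple of $\,\sca$; as $\,W\ne0$, this gives $\,\sca\ne0$, so (\ref{bts}-b) yields $\,\sigma\in\{-\sca/3,-\sca/\nh12,\sca/6\}$. Substituting $\,6\sigma=\hi^2(\hsc+\tsc)\,$ into these three possibilities produces $\,\sca=\ve\hi^2(\hsc+\tsc)\,$ with $\,\ve=-\frac12,-2,1\,$ respectively. Since $\,\ve=\sca/[\hi^2(\hsc+\tsc)]\,$ is continuous on $\,U\,$ with values in $\,\{-2,-\frac12,1\}$, it is constant on the connected set $\,U$, proving (a); and (b) drops out by inserting $\,\sca=\ve\hi^2(\hsc+\tsc)\,$ into (\ref{rwr}-iii).

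Next I would dispose of (e) and (f) by matching $\,\ve\,$ to the nine lines of (\ref{chb}). Inspecting the simple eigenvalue $\,\sigma\,$ in each line shows that $\,\ve=-2\,$ occurs only in line (i), $\,\ve=-\frac12\,$ exactly in lines (ii), (iii), (iv), (vii), and $\,\ve=1\,$ exactly in lines (v), (vi), (viii), (ix). In the present case (\ref{dim}-a) the summand planes of Lemma~\ref{smloc} coincide with the factor distributions $\,T\hM,T\tM$, and Lemma~\ref{othzr} says $\,\ein\,$ vanishes on both of them precisely in the four $\,\ve=-\frac12\,$ lines, while in the other five ($\,\ve\in\{-2,1\}$) the two planes are $\,\ein$-orthogonal. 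Because $\,\ein\ne0\,$ on $\,U$, the two conclusions cannot hold simultaneously (their conjunction would force $\,\ein=0$), so each one characterizes its $\,\ve$-regime; this is exactly (f) and (e).

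The relations (c) and (d) then follow by inspecting the blocks of (\ref{rwr}-i) -- (\ref{rwr}-ii). Taking the $\,g$-trace of (\ref{rwr}-i) over $\,T\hM$, where $\,g=\hg/\hi^2\,$ so that $\,\mathrm{tr}_g^{\hM}(\hna\nh d\hi)=\hi^2\hY$, gives $\,4\,\mathrm{tr}_g^{\hM}\ein=4\hi(\hY-\tY)+2\hi^2(\hsc-\tsc)$; whenever $\,\ein|_{T\hM}\,$ is $\,g$-trace\-less -- by Remark~\ref{eight} this holds for every $\,\ve\ne-2$ -- the left side is $\,0\,$ and we recover (c). For (d), when $\,\ve\in\{-2,-\frac12\}\,$ Lemma~\ref{othzr} makes $\,\ein|_{T\hM}\,$ either zero or, in line (i), a functional multiple of $\,\hg$; since the bracketed term of (\ref{rwr}-i) is already a multiple of $\,\hg$, solving that relation for $\,\hna\nh d\hi\,$ exhibits it as a multiple of $\,\hg$, and symmetrically $\,\tna\nh d\hi\,$ is a multiple of $\,\tg$.

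The conceptual content is thus split cleanly between the product structure and the curvature classification; I expect the only real difficulty to be organizational, namely checking that the eigenvalue bookkeeping across the nine lines of (\ref{chb}) sorts them into the three $\,\ve$-regimes in a manner consistent with the restriction types tabulated in Lemma~\ref{othzr} and Remark~\ref{eight}, and keeping straight that the planes appearing there are genuinely $\,T\hM\,$ and $\,T\tM\,$ rather than the $\,\ein$-eigenbasis $\,u_1,\dots,u_4\,$ used to state (\ref{ari}).
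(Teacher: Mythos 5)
Your proposal is correct and follows essentially the same route as the paper's own proof: (\ref{wpm}) together with Remark~\ref{cvrsl} and the nine cases of (\ref{chb}) yield (a) and the case-to-$\ve$ correspondence (the paper's (\ref{srt})), Lemma~\ref{othzr} plus the ``$\ein\ne0$'' logical argument gives (e)--(f), the $g$-traces of (\ref{rwr}) combined with Remark~\ref{eight} give (c), the vanishing/eigenspace alternatives of Lemma~\ref{othzr} with (\ref{rwr}) and (\ref{ari}-c) give (d), and substituting (a) into (\ref{rwr}-iii) gives (b). Your write-up is merely more explicit in places where the paper is terse (the trace computation for (c) and the continuity-plus-connectedness argument for the constancy of $\,\ve$), but no step differs in substance.
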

Let $\,\hd\hi,\,\td\hi\,$ be the ``partial 
differentials'' of $\,\hi$, and $\,\hd\td\hi$ a $\,(0,2)\,$
tensor, with the components in product coordinates $\,x^i\nh,x^a$ 
given by
$\,\partial\nh_i\w\hi\,$ and $\,\partial\nh_a\w\hi\,$ for
$\,\hd\hi\,$ and $\,\td\hi\,$, as well as 
$\,\mathrm{d}_{ia}\w=\mathrm{d}_{ai}\w=
\partial\nh_i\w\partial\nh_a\w\hi\,$ and
$\,\mathrm{d}_{ij}\w=\mathrm{d}_{ab}\w=0\,$ for $\,\mathrm{d}=\hd\td\hi$.

We will eventually show, in Lemma~\ref{third}, that Lemma~\ref{smeps}(f) only
holds vacuously, since $\,\ve\ne-\frac12$. As an intermediate step, we first
establish some conclusions, valid when $\,\ve=-\frac12$, so as
to use them later in deriving a contradiction.
\begin{lem}\label{prdwz}
For\/ $\,M\nh,g,U\nh,\hi,\hh\hsc\hh,\hs\tsc\,$ and\/ $\,\ve\,$ as in
Lemma\/~{\rm\ref{smeps}}, if\/ $\,\ve\in\{-\nh2,1\}$, then\/ 
$\,\hd\hi\hh\otimes\td\hi=\hs0\hs\,$ identically on\/ $\,U\nh$, while 
if\/ $\,\ve=-\frac12$, then\/ $\,d\hs\hsc\hh\otimes d\hs\tsc=0\,$ and\/ 
$\,\hd\td\hi\ne0\,$ everywhere in\/ $\,U\nh$.
\end{lem}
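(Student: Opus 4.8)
The plan is to split the argument by the value of $\ve\in\{-\nh2,-\frac12,1\}$ and to treat the two conclusions in parallel, since each asserts the vanishing of a product of a one-form on $\hM$ with a one-form on $\tM$. First I would record the structural dichotomy from Lemma~\ref{smeps}: when $\ve\in\{-\nh2,1\}$, part (e) together with (\ref{rco}) gives additive separation $\hi=\ha+\ta$, so that $\hY,\hQ,\hsc$ are functions on $\hM$ while $\tY,\tQ,\tsc$ are functions on $\tM$; when $\ve=-\frac12$, part (f) says $\ein$ restricted to each factor vanishes, so by (\ref{crm}-ii) the tensor $\ein$ reduces to its off-diagonal block, which on a product metric (mixed Christoffel symbols being zero) equals $2\hi^{-1}\hd\td\hi$. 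As $\ein\ne0$ on $\,U$, this instantly yields $\hd\td\hi\ne0$ everywhere on $\,U$, settling the second assertion for $\ve=-\frac12$.

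The engine of the rest is a single trace identity. Taking the $g$-trace of (\ref{rwr}-i) over $T\hM$ and of (\ref{rwr}-ii) over $T\tM$ and adding them, the left-hand sides combine into $4\,\mathrm{tr}_g\ein=0$, while on the right the terms carrying $\hsc-\tsc$ cancel and the Hessian traces contribute $\hY+\tY$; what survives is $8\hi(\hY+\tY)(1-\hi^2)$. Hence $\hY+\tY=0$ on the dense open subset where $\hi^2\ne1$.

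With this in hand I would finish each case. For $\ve=-\nh2$, separation makes $\hY,\tY$ functions on the two factors summing to $0$, hence constants; part (d) says $\ha$ is concircular on $\hM$ and $\ta$ on $\tM$, so the surface integrability relation for concircular functions (Remarks~\ref{rctev} and \ref{cncir}) gives $\hd\hY=-\hsc\,\hd\ha$, and $\hY$ constant forces $\hsc=0$ wherever $\hd\ha\ne0$; symmetrically $\tsc=0$ wherever $\td\ta\ne0$. If both gradients were nonzero at one point we would get $\hsc+\tsc=0$ there, contradicting Lemma~\ref{smeps}(a); therefore $\hd\ha\otimes\td\ta=\hd\hi\otimes\td\hi=0$. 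For $\ve=1$, the difference of the two traces combined with part (c) forces $\hsc=\tsc$; since $1-\ve=0$, the relation in Lemma~\ref{smeps}(b) collapses, using $\hY+\tY=0$, to $\hQ+\tQ=0$, and positivity of the Riemannian quantities $\hQ,\tQ$ gives $\hna\ha=\tna\ta=0$, i.e. $\hi$ is constant, so again $\hd\hi\otimes\td\hi=0$. For $\ve=-\frac12$ the same difference-plus-(c) computation gives $\hsc=\tsc$; as $\hsc$ depends only on $\hM$ and $\tsc$ only on $\tM$, each is constant, whence $d\hsc=d\tsc=0$ and in particular $d\hsc\otimes d\tsc=0$.

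The main obstacle I anticipate is bookkeeping rather than conceptual. The cancellations producing $\hY+\tY=0$ and $\hsc=\tsc$ are valid only off the exceptional locus $\hi^2=1$ (and where the relevant factors are nonzero), so I would establish the identities on a dense open subset of $\,U$ and extend them by continuity; this is exactly why the clean uniform output is the product form $\hd\hi\otimes\td\hi=0$ (resp. $d\hsc\otimes d\tsc=0$) rather than bare constancy. The one genuinely delicate ingredient is the integrability relation $\hd\hY=-\hsc\,\hd\hi$ in the case $\ve=-\nh2$: it is the Ricci identity applied to the concircular equation $\hna d\hi=(\hY/2)\hg$ on each $\hM$-slice, and it is what converts the analytic fact that $\hY$ is constant into the curvature statement driving the contradiction with Lemma~\ref{smeps}(a). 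Keeping straight which of (c) and (d) is available for each $\ve$, and checking that the mixed product-metric terms in the trace identity really cancel, are the points requiring care.
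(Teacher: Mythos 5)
Your opening step for $\,\ve=-\frac12\,$ is correct and coincides with the paper's: by Lemma~\ref{smeps}(f) and (\ref{crm}-ii), the restriction of $\,\ein\,$ to each factor vanishes, so $\,\ein\,$ reduces to its mixed block $\,2\hi^{-\nh1}\hs\hd\td\hi$, and $\,\ein\ne0\,$ on $\,U\,$ forces $\,\hd\td\hi\ne0$. But the ``engine'' of everything else is broken: the trace identity you propose is vacuous. Tracing (\ref{rwr}-i) over $\,T\nh\hM\,$ and (\ref{rwr}-ii) over $\,T\nh\tM\,$ with respect to $\,g\,$ and adding, the Hessian terms contribute $\,8\hi^{-\nh1}\cdot\hi^2(\hY\nnh+\tY)=8\hi(\hY\nnh+\tY)\,$ (since $\,\mathrm{tr}_g\hs\hna\nh d\hi=\hi^2\hY$), the $\,(\hsc-\tsc)$-terms cancel, and the $\,-2\hi(\hY\nnh+\tY)\hs g\,$ terms traced over each two-dimensional factor give $\,-8\hi(\hY\nnh+\tY)$; the total is identically $\,0$, exactly matching $\,4\,\mathrm{tr}_g\ein=0$. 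There is no surviving term $\,8\hi(\hY\nnh+\tY)(1-\hi^2)\,$ -- that factor is a computational slip, presumably from mixing $\,g$- and $\,\bg$-traces -- because (\ref{rwr}) already has the tracelessness of $\,\ein\,$ built in. Consequently $\,\hY\nnh+\tY=0\,$ is unestablished, and it is in fact \emph{false} on genuine solutions: in case (i) of Theorem~\ref{exatt}, which has $\,\ve=1\,$ by (c) of Sect.\,\ref{pt}, one has $\,\hi\hY\nnh=2\hh\hQ\ne0\,$ and $\,\tY\nnh=0\,$ on the proper examples. Indeed your $\,\ve=1\,$ chain concludes that $\,\hi\,$ is constant, which would rule out the EPS realization itself -- the argument proves too much, a sure sign the engine is wrong.

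With that identity gone, all three case-analyses collapse: the $\,\ve=-\nh2\,$ argument needs constancy of $\,\hY,\tY\,$ (which came only from $\,\hY\nnh+\tY=0$); the claim that the difference of the two traces ``combined with (c) forces $\,\hsc=\tsc$'' also fails -- substituting Lemma~\ref{smeps}(c) into the difference of partial traces again yields $\,0=0\,$ (for $\,\ve=-\frac12\,$ it merely recovers (f)), not $\,\hsc=\tsc$. Note also that concluding \emph{both} $\,\hsc\,$ and $\,\tsc\,$ constant for $\,\ve=-\frac12\,$ would be strictly stronger than the lemma asserts ($\,d\hs\hsc\otimes d\hs\tsc=0\,$ only says one of them is locally constant), and would render vacuous the first, nontrivial case treated in Sect.\,\ref{pm} -- another red flag. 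The paper's actual proof cannot be short-circuited by an algebraic trace identity: it assumes the bad set $\,U'$ nonempty, exploits the separated forms $\,\hi=\ha+\ta\,$ (for $\,\ve\in\{-\nh2,1\}$, via Lemma~\ref{smeps}(e) and (\ref{rco})) or $\,\hi=\ha\ta+\gm\,$ (for $\,\ve=-\frac12$, via Lemma~\ref{smeps}(d) and Remark~\ref{cncir}), reduces to third-order ODEs such as (\ref{apb}) and (\ref{ddd}) by Lemma~\ref{twops}, and derives contradictions ($\,\hsc+\tsc=0$, hence $\,W\nnh\nh=\hs0\,$ by (\ref{wpm}), or forced constancy of $\,\alpha,\beta$) through repeated differentiation and separation of variables. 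Your proposal replaces that analysis with an identity that carries no information, so the proof has a genuine, unfixable-as-written gap in all three cases.
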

\begin{lem}\label{third}
Under the hypotheses of Lemma\/~{\rm\ref{smeps}}, $\,\ve\ne-\frac12$.
\end{lem}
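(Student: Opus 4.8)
The plan is to assume, for contradiction, that $\,\ve=-\frac12\,$ on some connected component $\,U\,$ as in Lemma~\ref{smeps}, and to play the two conclusions of Lemma~\ref{prdwz} against each other through Lemma~\ref{bothc}. The decisive observation is that parts (b), (c), (d) of Lemma~\ref{smeps} are precisely the hypotheses (iii), (ii), (i) of Lemma~\ref{bothc}, with the value $\,\ve=-\frac12\ne1\,$ permitted there. Consequently, \emph{if the factor scalar curvatures $\,\hsc,\tsc\,$ were constant}, Lemma~\ref{bothc} would force $\,\hi=\ha+\ta\,$ with $\,\ha\,$ on $\,\hM\,$ and $\,\ta\,$ on $\,\tM$, hence $\,\hd\td\hi=0\,$ --- in direct contradiction with the conclusion $\,\hd\td\hi\ne0\,$ of Lemma~\ref{prdwz}. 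Thus everything reduces to showing that, when $\,\ve=-\frac12$, both $\,\hsc\,$ and $\,\tsc\,$ are constant on $\,U$.

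Here the other conclusion of Lemma~\ref{prdwz}, namely $\,d\hsc\otimes d\tsc=0$, enters, and this is the crux: it only guarantees that, locally, \emph{at least one} factor has constant scalar curvature. I would argue that the open set $\,\hat A\subseteq U\,$ on which $\,d\hsc\ne0\,$ is empty. Suppose not; near a point of $\,\hat A\,$ choose a product neighborhood $\,V_1\times V_2\,$ with $\,d\hsc\ne0\,$ on $\,V_1$, so that $\,d\tsc=0\,$ on $\,V_2\,$ and $\,(V_2,\tg)\,$ has constant curvature $\,c$. By Lemma~\ref{smeps}(d) each slice $\,\hi(\,\cdot\,,z)\,$ is concircular on $\,\hM$; since $\,dK\ne0\,$ on $\,V_1\,$ (as $\,K=\hsc/2$), Remark~\ref{cncir} makes concircular functions there unique up to affine changes, so $\,\hi(y,z)=p(z)\,\alpha_0(y)+q(z)\,$ for a fixed nonconstant concircular $\,\alpha_0\,$ on $\,V_1\,$ (were every slice constant in $\,y$, we would already have $\,\hd\td\hi=0$). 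Moreover $\,\hde\alpha_0\,$ is a function of $\,\alpha_0\,$ by Remark~\ref{cncir}, whence Remark~\ref{rctev} yields the relation $\,d(\hde\alpha_0)=-\hsc\,d\alpha_0$.

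I would then differentiate Lemma~\ref{smeps}(c), that is $\,2(\hY-\tY)=\hi(\tsc-\hsc)$, along $\,\hM$. Substituting $\,\hi=p\alpha_0+q$, using $\,\hY=\hde\hi=p\,\hde\alpha_0\,$ together with the relation just noted, and $\,\tY=\tde\hi=\alpha_0\,\tde p+\tde q$, the identity collapses (after a short calculation) to $\,p\,B+q\,d\hsc-2\,\tde p\,d\alpha_0=0\,$ on $\,V_1$, where $\,B=\alpha_0\,d\hsc-(2c+\hsc)\,d\alpha_0\,$ and the coefficients $\,p,q,\tde p\,$ are functions of $\,z\,$ alone. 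Reading this as a relation among the functions $\,p,q,\tde p\,$ of $\,z$: in the generic case where they are linearly independent, each coefficient $\,1$-form must vanish, and the coefficient of $\,q\,$ gives $\,d\hsc=0\,$ on $\,V_1$, contradicting $\,d\hsc\ne0$; in the degenerate cases the dependence forces $\,p\,$ to be constant, so that $\,\hi\,$ has additively separated variables and $\,\hd\td\hi=0$, or it is disposed of in the same manner. Either outcome contradicts Lemma~\ref{prdwz}. Hence $\,\hat A=\emptyset$, i.e. $\,\hsc\,$ is constant on $\,U$, and the symmetric argument (interchanging the two factors and again invoking $\,d\hsc\otimes d\tsc=0$) shows $\,\tsc\,$ constant as well.

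With both scalar curvatures constant, Lemma~\ref{bothc} applies as in the first paragraph and produces $\,\hd\td\hi=0$, contradicting Lemma~\ref{prdwz}; therefore $\,\ve\ne-\frac12$. I expect the \textbf{main obstacle} to be exactly the middle step --- eliminating the ``mixed'' region where precisely one factor has constant curvature --- because $\,d\hsc\otimes d\tsc=0\,$ by itself does not make both curvatures constant on a connected (possibly non-product-shaped) $\,U$; it is the concircular rigidity of Remarks~\ref{cncir} and~\ref{rctev}, fed into the separated identity Lemma~\ref{smeps}(c), that is needed to close this gap.
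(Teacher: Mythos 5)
Your overall skeleton coincides with the paper's proof: the split, via Lemma~\ref{prdwz}, into the two local cases (both $\,\hsc,\tsc\,$ constant, or $\,\tsc\,$ constant with $\,d\hs\hsc\ne0$), the disposal of the both-constant case by feeding Lemma~\ref{smeps}(b)--(d) into Lemma~\ref{bothc} and contradicting $\,\hd\td\hi\ne0$, and the multiplicative form $\,\hi=p\hs\alpha_0+q\,$ obtained in the mixed case from Lemma~\ref{smeps}(d) and Remark~\ref{cncir}, are all exactly the paper's steps, and your differentiated version of Lemma~\ref{smeps}(c) is computed correctly.

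The gap is in the mixed case, precisely where you place ``the crux.'' Your trichotomy fails at the degenerate branch: the ``generic case where $\,p,q,\tde p\,$ are linearly independent'' never actually occurs, because Lemma~\ref{smeps}(d) applied in the $\,\tM\,$ direction, together with Lemma~\ref{concr}, forces $\,\tde p=-\tsc\hs p+b_1\w$ and $\,\tde q=-\tsc\hs q+b_2\w$ (the paper's (\ref{tdb})), and your relation then forces $\,q\,$ to be affine in $\,p\,$ (the paper's $\,\gm=b_3\w\beta+b_4\w$, absorbed by replacing $\,\alpha_0\,$ with $\,\alpha_0+b_3\w$). What survives of your differentiated identity is then only the paper's (\ref{asp}), $\,2\hh\hde\hh\alpha_0=-\alpha_0(\hsc+\tsc)$, and this is \emph{consistent with nonconstant} $\,\hsc$: writing $\,\hna\nh d\alpha_0=F(\alpha_0)\hg\,$ and using Remark~\ref{rctev} ($\hsc=-2F'(\alpha_0)$), equation (\ref{asp}) with $\,\tsc=2c\,$ reduces to $\,F'=2F\nnh/\alpha_0+c$, solved by $\,F(\alpha_0)=c_1\alpha_0^2-c\hs\alpha_0\,$ with arbitrary $\,c_1\w$, giving $\,\hsc=2c-4c_1\alpha_0\,$ nonconstant whenever $\,c_1\ne0$; neither $\,d\hs\hsc=0\,$ nor constancy of $\,p\,$ follows. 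Closing this branch requires the hypothesis your argument never uses, Lemma~\ref{smeps}(b): in the paper, (b) combined with $\,2\tQ=-\tsc\hs(\hi+b_5\w)^2+b_6\w$ (from Lemma~\ref{concr} and (\ref{dch}-b)) produces a quadratic identity in $\,\beta\,$ whose leading coefficient must vanish --- the key relation (\ref{lcz}) --- and then the coordinates of Lemma~\ref{twops} convert (\ref{asp}) and (\ref{lcz}) into the third-order ODE pair (\ref{add}), whose elimination yields $\,\ddot\alpha=c\hs\alpha\,$ and hence $\,\hsc=-2c\,$ constant, the desired contradiction. Without this additional input, your mixed case does not close, so the proposal as written proves only the both-constant half of the lemma.
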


\section{Proofs of the first two lemmas}\label{pf}
\setcounter{equation}{0}
\begin{proof}[Proof of Lemma~\ref{smeps}]
By (\ref{wpm}), at any point of $\,\,U\nh$, we have (\ref{bts}) and
-- consequently, in view of Remark~\ref{cvrsl}  -- one of the nine cases in 
(\ref{chb}) where, at the end of each line, we also provide the resulting 
ordered spectra of $\,24\hh W^+$ and $\,24\hh W^-\nnh\nh$. Equating each
simple eigen\-val\-ue in (\ref{chb}) with the one in (\ref{wpm}), we obtain
the assertion (a) where, with the same order as in (\ref{chb}) and (\ref{nbs}),
\begin{equation}\label{srt}
\ve\,\,\mathrm{\ equals,\ respectively,\ 
}\,-\hskip-4pt2,\textstyle{-\frac12,-\frac12,-\frac12,\,1,\,1,-\frac12},\,
1,\,1.
\end{equation}
Now (\ref{srt}) and Lemma~\ref{othzr} yield (e) -- (f), the `if' parts 
obvious for logical reasons as $\,\ein\ne0$. Next, (d) for
$\,\ve\in\{-\frac12,-\nh2\}\,$ 
follows from (f) and (\ref{rwr}) or, respectively, from (\ref{srt}),
(\ref{nbs}), (\ref{rwr}) and (\ref{ari}-c). Finally, we get 
(c) from Remark~\ref{eight}, evaluating either $\,g$-trace 
in (\ref{rwr}), while (a) and (\ref{rwr}-iii)
imply (b).
\end{proof}
We prove Lemma~\ref{prdwz} by cases: (i) with $\,\ve\in\{-\nh2,1\}$, and 
(ii) with $\,\ve=-\frac12$.
\begin{proof}[Proof of Lemma~\ref{prdwz}{\rm(i)}]
We derive a contradiction from the assumption that the subset $\,\,U'$
of $\,\,U\,$ on which $\,\hd\hi\,$ and $\,\td\hi\,$ are both nonzero
is nonempty.

By Lemma~\ref{smeps}(e) and (\ref{rco}), $\,\hi:U\to\bbR\smallsetminus\{0\}\,$ 
has additively separated variables: 
$\,\hi=\ha+\ta\,$ with $\,\td\hh\ha=\hd\hh\ta=0$.

If $\,\ve=1$, (b) and (c) in Lemma~\ref{smeps} easily imply, via a 
sep\-a\-ra\-tion-of-var\-i\-ables argument, that, for some
constants $\,p_1\w,p_2\w,q_1\w,\dots,q_5\w$, locally in $\,\,U'\nh$,
\[
\begin{array}{l}
\hY\nnh=2p_1\w\ha+2q_1\w\hh,\quad\tY\nnh=-2p_1\w\ta+2q_2\w\hh,\quad
\hsc=p_2\w\ha+q_3\w\hh,\quad\tsc=p_2\w\ta+q_4\w\hh,\\
\hQ=p_1\w\ha^2\nh+(q_1\w+q_2\w)\ha+q_5\w,
\qquad
\tQ=-p_1\w\ta^2\nh+(q_1\w+q_2\w)\ta-q_5\w.
\end{array}
\]
Plugging this into the equality of Lemma~\ref{smeps}(c) we see that, as 
$\,d\ha,d\ta\,$ are nonzero, 
$\,p_2\w=4p_1+q_3\w-q_4\w=4(q_1\w-q_2\w)=0$, and the above displayed formula
yields
\[
\begin{array}{ll}
\hY\nnh=2p_1\w\ha+2q_1\w\hh,&\quad\hQ=p_1\w\ha^2\nh+2q_1\w\ha+q_5\w
\\
\tY\nnh=-2p_1\w\ta+2q_1\w\hh,&\quad\tQ=-p_1\w\ta^2\nh+2q_1\w\ta-q_5\w.
\end{array}
\]
Lemma~\ref{cgcrv} applied first to 
$\,(g,\alpha,p,q,q')=(\hg,\ha,p_1\w,q_1\w,q_5\w)$, then to
$\,(g,\alpha,p,q,q')=(\tg,\ta,\nh-p_1\w,q_1\w,\nh-q_5\w)$, thus gives 
$\,(\hsc,\tsc)=(-\nh2p_1\w,2p_1\w)$, and so, by (\ref{wpm}),
$\,W\nnh\nh=\hs0$, contrary to the definition of $\,\,U\nh$, which
proves our claim in the case $\,\ve=1$.

Suppose now that $\,\ve=-\nh2$. 
Locally in $\,\,U'\nh$, applying 
Lemma \ref{twops} to both pairs $\,(\hg,\ha)\,$ and $\,(\tg,\ta)$, which
is allowed by Lemma~\ref{smeps}(d), we get
coordinates $\,x^1\nh,x^2$ for $\,\hM\,$ and $\,x^3\nh,x^4$ for
$\,\tM\,$ such that, by Lemma~\ref{smeps}(b) and \eqref{dae},
\begin{equation}\label{apb}
(\alpha+\beta)^2(\dot\beta\nh\dddot\alpha+\dot\alpha\nh\dddot\beta)\,=
\,2\hh\dot\alpha\dot\beta\hh[(\alpha+\beta)(\ddot\alpha+\ddot\beta)
-\dot\alpha^2\nh-\dot\beta^2].
\end{equation}
Our notation here is simplified: $\,\alpha\,$ stands for $\,\ha\,$ and
$\,\beta\,$ for $\,\ta$, while $\,(\,\,)\dot{\,}$ denotes
$\,\partial/\partial x^1$ when applied to functions of $\,x^1\nh$, and
$\,\partial/\partial x^3$ for functions of $\,x^3\nh$. 

Note that, due to the definition of $\,\,U'\nh$, as 
$\,\hi=\alpha+\beta:U\to\bbR\smallsetminus\{0\}$, 
\begin{equation}\label{nzr}
\dot\alpha\dot\beta(\alpha+\beta)\,\ne\,0\,\mathrm{\ everywhere\ in\ 
}\,\,U'\nh.
\end{equation}
We can consequently rewrite \eqref{apb} as 
\begin{equation*}
  \frac{\dddot{\alpha}}{2\dot{\alpha}}{{-}}\frac{\ddot{\alpha}}{\alpha+\beta}{{+}}\frac{\dot{\alpha}^2}{(\alpha+\beta)^2}={{\frac{\ddot{\beta}}{\alpha+\beta}-\frac{\dot{\beta}^2}{(\alpha+\beta)^2}}}+{\frac{\kp}{2}},\quad
\mathrm{where\ \ }{{\kp}}=-\frac{\dddot{\beta}}{{\dot{\beta}}}, 
\end{equation*}
and apply $\,\dot{\beta}^{-1}(\alpha+\beta)^2\partial/\partial x^3$ twice in
a row, first obtaining 
\begin{equation*}
\ddot{\alpha}-\frac{2\dot{\alpha}^2}{\alpha+\beta}={{\frac{{{\dot\kp}}}{2\dot{\beta}}(\alpha+\beta)^2}}-{{\kp}}(\alpha+\beta)+\frac{2\dot{\beta}^2}{\alpha+\beta}-3\ddot{\beta},
\end{equation*}
and then
\begin{equation*}
2\dot\alpha^2
=\,\frac{{({{\dot{\kp}}}/\dot{\beta})}\dot{\,}}
{2\dot\beta}(\alpha+\beta)^4\nh
+2\kp(\alpha+\beta)^2\nh+4\ddot\beta(\alpha+\beta)-2\dot{\beta}^2.
\end{equation*}
Now, applying $\,2(\alpha+\beta)^{-2}{{{\partial}/{\partial x^3}}}$, we see
that 
\begin{equation*}
[(\dot\kp/\dot\beta)/\dot\beta]\nnh^{^{\boldsymbol{\cdot}}}(\alpha
+\beta)^2\nh+4(\dot\kp/\dot\beta)\nh^{\boldsymbol{\cdot}}(\alpha+\beta)
+4\dot\kp=0.
\end{equation*}
It follows that $\,\kp\,$ is constant, or else $\,\alpha+\beta$, being a
root of a nontrivial quadratic equation with coefficients that are functions 
of $\,x^3\nh$, would itself be a function of $\,x^3\nh$, even though
$\,d\ha\hn\ne0$. Thus, in \eqref{apb}, $\,2\hskip-1.7pt\dddot\beta=-\sca_1\w\dot\beta\,$
for some constant $\,\sca_1\w$.

Since \eqref{apb} involves $\,\alpha\,$ and
$\,\beta\,$ symmetrically, we also have
$\,2\hskip-1.1pt\dddot\alpha=-\sca_0\w\dot\alpha\,$ for some
$\,\sca_0\w\in\bbR$, while, by \eqref{dae},
\begin{equation}\label{sop}
\sca_0\w\,\mathrm{\ and\ }\,\,\sca_1\w\,\mathrm{\ are\ the\ scalar\
curvatures\ of\ }\,\hs\hg\hs\,\mathrm{\ and\ }\,\hs\tg. 
\end{equation}
With $\,2\hskip-1.7pt\dddot\alpha,2\hskip-1.7pt\dddot\beta\,$ replaced by
$\,-\sca_0\w\dot\alpha\,$ 
and $\,-\sca_1\w\dot\beta$, \eqref{apb} reads
\begin{equation*}
(\sca_0\w+\sca_1\w)(\alpha+\beta)^2\hs=\,4[\dot\alpha^2\nh+\dot\beta^2\nh
-(\alpha+\beta)(\ddot\alpha+\ddot\beta)],
\end{equation*}
as (\ref{nzr}) allows us to divide by $\,\dot\alpha\dot\beta$. In other
words,
\begin{equation*}
\ddot\alpha-\frac{\dot\alpha^2}{\alpha+\beta}\,=
\,-\frac{\sca_0\w+\sca_1\w}4\hs(\alpha+\beta)+\frac{\dot\beta^2}{\alpha+\beta}
-\ddot\beta.
\end{equation*}
Applying $\,\dot\beta^{-\nh1}(\alpha+\beta)^2\partial/\partial x^3$ to 
the last equality, we obtain 
\begin{equation*}
\dot\alpha^2\hs=\,\frac{\sca_1\w-\sca_0\w}4\hs(\alpha+\beta)^2\nh
+2\ddot\beta(\alpha+\beta)-\dot\beta^2
\end{equation*}
and, further applying $\,4\hs\partial/\partial x^3\nh$, we get
$\,0=-2(\sca_0\w+\sca_1\w)(\alpha+\beta)\dot\beta$. Hence
$\,W\nnh\nh=\hs0$ according to (\ref{nzr}) -- \eqref{sop} and (\ref{wpm}),
which is the required  contradiction.
\end{proof}
\begin{proof}[Proof of Lemma~\ref{prdwz}{\rm(ii)}]Now $\,\ve=-\frac12$.
First, $\,\hd\td\hi\ne0\,$ everywhere in $\,\,U$ since, by
Lemma~\ref{smeps}(f) and (\ref{crm}-ii), $\,\ein\,$ would vanish at points
with $\,\hd\td\hi=0$.

To prove the other claim, 
suppose that, on the contrary, the subset $\,\,U'$ of $\,\,U$ on which 
$\,d\hs\hsc\,$ and $\,d\hs\tsc\,$ are both nonzero is nonempty.

Thus, $\,\hi:U'\nh\to\bbR\smallsetminus\{0\}\,$ has the form
$\,\hi=\ha\ta+\gm\,$ with $\,\td\hh\ha=\hd\hh\ta=0\,$ and a constant $\,\gm$.
In fact, we obtain $\,\ha\,$ by restricting $\,\hi\,$ to
$\,[\hM\times\{z\}]\cap\hs U'$ with a fixed $\,z\in\tM\nh$, and as
$\,z\,$ varies, the ``af\-fine'' form  $\,\hi=\ha\ta+\widetilde\gm\,$ follows 
from Lemma~\ref{smeps}(d) and 
Remark~\ref{cncir}, where $\,\widetilde\gm\,$ may still vary with
$\,z$, but $\,\hd\widetilde\gm=0$. As our assumptions involve $\,\hM\,$ and
$\,\tM\,$ symmetrically, we similarly have 
$\,\hi=\hb\tb+\widehat\delta\,$ with 
$\,\td\hh\hb=\hd\hh\tb=\td\widehat\delta=0$.
Thus, applying $\,\partial\nh_i\w\partial\nh_a\w$ to both expressions for
$\,\hi$, in product coordinates $\,x^i\nh,x^a\nh$, and noting that
$\,\hd\td\hi\ne0$, we get $\,d\hb=p\,d\ha\,$ and
$\,d\tb=p^{-\nh1}\hn d\ta\,$ with a constant $\,p\ne0$. Writing
$\,\hb=p\ha+q_1\w$ and $\,\tb=p^{-\nh1}\ta+q_2\w$, where
$\,q_1\w,q_2\w\in\bbR$, we now get 
$\,\ha\ta+\widetilde\gm=\hi=\ha\ta+pq_2\w\ha+p^{-\nh1}\nnh q_1\w\ta
+\widehat\delta$, and so
$\,\gm=\widetilde\gm-p^{-\nh1}\nnh q_1\w\ta=pq_2\w\ha+\widehat\delta\,$ is
constant, and our claim follows if we replace $\,\ha\,$ by
$\,\ha+p^{-\nh1}\nnh q_1\w$.

Locally in $\,\,U'\nh$, Lemma \ref{twops} applied to both $\,(\hg,\ha)\,$ and
$\,(\tg,\ta)\,$ yields 
coordinates $\,x^1\nh,x^2$ for $\,\hM\,$ and $\,x^3\nh,x^4$ for
$\,\tM\,$ such that, by Lemma~\ref{smeps}(b) and \eqref{dae},
\begin{equation}\label{ddd}
\frac{\dddot\alpha}{\dot\alpha}-\frac{4\ddot\alpha\beta}{\alpha\beta+\gm}
+\frac{4\dot\alpha^2\nh\beta^2}{(\alpha\beta+\gm)^2}\,
=\,\kp+\frac{4\alpha\ddot\beta}{\alpha\beta+\gm}
-\frac{4\alpha^2\nh\dot\beta^2}{(\alpha\beta+\gm)^2},\mathrm{\ \ where\ \
}\kp=-\frac{\dddot\beta}{\dot\beta}, 
\end{equation}
Our notation here is simplified: $\,\alpha\,$ stands for $\,\ha\,$ and
$\,\beta\,$ for $\,\ta$, while $\,(\,\,)\dot{\,}$ denotes
$\,\partial/\partial x^1$ when applied to functions of $\,x^1\nh$, and
$\,\partial/\partial x^3$ for functions of $\,x^3\nh$. The  
divisions in (\ref{ddd}) are allowed: 
as $\,\hi=\alpha\beta+\gm:U\to\bbR\smallsetminus\{0\}\,$ and
$\,\hd\td\hi\ne0$.
\begin{equation}\label{non}
\dot\alpha\dot\beta(\alpha\beta+\gm)\,\ne\,0\,\mathrm{\ everywhere\ in\ 
}\,\,U'\nh.
\end{equation}
We now apply $\,\dot{\beta}^{-\nh1}(\alpha\beta+\gm)^2\partial/\partial x^3$ to
(\ref{ddd}) twice in a row, first obtaining 
\begin{equation*}
-4\gm\ddot{\alpha}+\frac{8\gm\dot{\alpha}^2\nh\beta}{\alpha\beta+\gm}
=\frac{\dot\kp}{\dot\beta}(\alpha\beta+\gm)^2\nh
-4\kp\alpha(\alpha\beta+\gm)-12\alpha^2\nh\ddot\beta
+\frac{8\alpha^3\nh\dot\beta^2}{\alpha\beta+\gm}
\end{equation*}
and then
\begin{equation*}
8\gm^2\nh\dot\alpha^2
=\,\frac{(\dot\kp/\dot\beta)\dot{\,}}{\dot\beta}(\alpha\beta+\gm)^4\nh
-\frac{2\dot\kp}{\dot\beta}\alpha(\alpha\beta+\gm)^3\nh
+8\kp\alpha^2\nh(\alpha\beta+\gm)^2\nh
+16\alpha^3\nh\ddot\beta(\alpha\beta+\gm)
-8\alpha^4\nh\dot\beta^2.
\end{equation*}
Next, applying
$\,\alpha^{-\nh2}\nh(\alpha\beta+\gm)^{-\nh2}
\partial/\partial x^3\nh$, we see that 
\begin{equation*}
[(\dot\kp/\dot\beta)/\dot\beta]\nnh^{^{\boldsymbol{\cdot}}}
(\beta+\gm/\alpha)^2\nh+2(\dot\kp/\dot\beta)\nh^{\boldsymbol{\cdot}}
(\beta+\gm/\alpha)+2\dot\kp\,=\,0\hh.
\end{equation*}
It follows that $\,\gm=0$. Otherwise, $\,\kp\,$ would be constant, as 
$\,\beta+\gm/\alpha$, being a root of a nontrivial quadratic equation with
coefficients that are functions of $\,x^3\nh$, would itself be a function of
$\,x^3\nh$, even though $\,d\ha\hn\ne0$. However, by \eqref{dae} and 
(\ref{ddd}), $\,2\kp=\tsc$, while $\,d\hs\tsc=0\,$ on $\,\,U'\nh$.

With $\,\gm=0$, (\ref{ddd}) becomes
\begin{equation}\label{ddz}
\frac{\dddot\alpha}{\dot\alpha}-\frac{4\ddot\alpha}{\alpha}
+\frac{4\dot\alpha^2}{\alpha^2}\,
=\,-\frac{\dddot\beta}{\dot\beta}+\frac{4\ddot\beta}{\beta}
-\frac{4\dot\beta^2}{\beta^2}.
\end{equation}
At the same time, Lemma~\ref{smeps}(c) and \eqref{dae} give
\begin{equation}\label{tst}
\frac{\dddot\alpha}{\dot\alpha}-\frac{2\ddot\alpha}{\alpha}\,
=\,\frac{\dddot\beta}{\dot\beta}-\frac{2\ddot\beta}{\beta}.
\end{equation}
An obvious sep\-a\-ra\-tion-of-var\-i\-ables argument shows that both sides in
(\ref{ddz}) and (\ref{tst}) are constant, so that
\begin{equation}\label{bsc}
\mathrm{i)}\hskip6pt
\frac{\dddot\alpha}{\dot\alpha}-\frac{4\ddot\alpha}{\alpha}
+\frac{4\dot\alpha^2}{\alpha^2}\,
=\,b_0\w\hh,\qquad
\mathrm{ii)}\hskip6pt
\frac{\dddot\alpha}{\dot\alpha}-\frac{2\ddot\alpha}{\alpha}\,
=\,b_1\w
\end{equation}
for some $\,b_0\w,b_1\w\in\bbR$.
Equation (\ref{bsc}-ii) states that
$\,3b_2\w=\alpha^{-2}(\ddot\alpha+b_1\w\alpha)\,$ is constant, and
multiplying the equality $\,\ddot\alpha+b_1\w\alpha=3b_2\w\alpha^2$ by
$\,2\dot\alpha\,$ we get
$\,\dot\alpha^2\nh+b_1\w\alpha^2\nh=2b_2\w\alpha^3\nh+b_3\w$ with some
constant $\,b_3\w$. 

Subtracting (\ref{bsc}-i) from (\ref{bsc}-ii), and then replacing
$\,\ddot\alpha\,$ by $\,-b_1\w\alpha+3b_2\w\alpha^2$ (see the last paragraph),
we obtain $\,4\dot\alpha^2\nh=(b_0\w-3b_1)\alpha^2\nh+6b_2\w\alpha^3\nh$. 
while, as we just saw,
$\,4\dot\alpha^2\nh=-4b_1\w\alpha^2\nh+8b_2\w\alpha^3\nh+4b_3\w$. As
$\,\alpha\,$ is nonconstant by (\ref{non}),
$\,b_1\w=-b_0\w$ and $\,b_2\w=b_3\w=0$.
Thus, $\,\ddot\alpha=b_0\w\alpha$ and
$\,\dot\alpha^2\nh=b_0\w\alpha^2\nh$. However, (\ref{ddz}) and (\ref{tst}) for
$\,\beta$ yield the analog of (\ref{bsc}) with
$\,(-b_0\w,b_1)\,$ instead of $\,(b_0\w,b_1)$, and so our conclusion 
that $\,b_1\w=-b_0\w$, which also applies to the new pair, now amounts
to $\,b_0\w=b_1\w=0$. The resulting constancy of $\,\alpha\,$ and $\,\beta\,$
contradicts (\ref{non}).
\end{proof}

\section{Proofs of Lemma~\ref{third} and Theorem~\ref{cpsfm}}\label{pm}
\setcounter{equation}{0}
Assuming that $\,\ve=-\frac12\,$ in Lemma~\ref{smeps}, we will
derive a contradiction.

Since the hypotheses of Lemma~\ref{smeps} involve the two factor manifolds
symmetrically, Lemma~\ref{prdwz} allows us to restrict our discussion to a
prod\-uct-type connected open
set $\,\,U'\nh\subseteq U$ on which either $\,\tsc\,$ is constant and 
$\,d\hh\hsc\ne0\,$ everywhere, or both $\,\hsc\,$ and $\,\tsc\,$ are constant.

In the former case, $\,\hi:U'\nh\to\bbR\smallsetminus\{0\}\,$ equals 
$\,\alpha\beta+\gm\,$ with
$\,\td\hh\alpha=\hd\hh\beta=\hd\hh\gm=0$. 
In fact, we obtain $\,\alpha\,$ by restricting $\,\hi\,$ to 
$\,[\hM\times\{z\}]\cap\hs U'$ with a fixed $\,z\in\tM\nh$, and as 
$\,z\,$ varies, Lemma~\ref{smeps}(d) and Remark~\ref{cncir} imply
the ``af\-fine'' form $\,\hi=\alpha\beta+\gm$. Since 
$\,\hd\td\hi\ne0\,$ everywhere (see Lemma~\ref{prdwz}),
\begin{equation}\label{nnc}
d\alpha\,\mathrm{\ and\ }\,d\beta\,\mathrm{\ are\ both\ nonzero\ everywhere\
in\ }\,\,U'\nh.
\end{equation}
Due to (\ref{nnc}) for $\,\alpha\,$ and Lemma~\ref{smeps}(d),  
the $\,\tg$-Hess\-i\-ans
of $\,\beta\,$ and $\,\gm\,$ are both functional multiples of the
metric $\,\tg$, and so, in view of Lemma~\ref{concr} and (\ref{dch}-a),
\begin{equation}\label{tdb}
\tde\beta=-\tsc\beta+b_1\w\mathrm{\ and\ }\,\tde\gm=-\tsc\gm+b_2\w\mathrm{\
for\ some\ constants\ }\,b_1\w,b_2\w\hh.
\end{equation}
Thus, $\,(\alpha\beta+\gm)(\tsc-\hsc)
=2(\beta\hde\hh\alpha-\alpha\tde\beta-\tde\gm)
=2[\beta\hde\hh\alpha+\tsc(\alpha\beta+\gm)-b_1\w\hn\alpha-b_2\w]\,$ by
Lemma~\ref{smeps}(c), that is, 
\[
[\alpha(\hsc+\tsc)+2\hh\hde\hh\alpha]\hs\beta\,+\,(\hsc+\tsc)\gm\,
=\,2(b_1\w\hn\alpha+b_2\w)\hh.
\]
Hence, as $\,\hsc+\tsc\ne0\,$ in Lemma~\ref{smeps}(a), along 
$\,[\hM\times\{z\}]\cap\hs U'$ for a fixed $\,z\in\tM\nh$, we get 
$\,\gm=b_3\w\beta+b_4\w$, where 
$\,b_3\w,b_4\w\in\bbR$. Replacing $\,\alpha\,$ by $\,\alpha+b_3\w$ we may now
assume that $\,\gm\,$ is constant. For this new $\,\alpha$, the above
displayed formula and (\ref{nnc}) yield
\begin{equation}\label{asp}
2\hh\hde\hh\alpha\,=\,-\alpha(\hsc+\tsc)\hh.
\end{equation}
Also, by Lemma~\ref{concr} and (\ref{dch}-b),
$\,2\tQ=-\tsc\hh(\alpha\beta+\gm+b_5\w)^2\nh+b_6\w$, 
where $\,b_5\w,b_6\w\in\bbR$. As $\,\hi=\alpha\beta+\gm$,
Lemma~\ref{smeps}(b) with $\,\ve=-\frac12\,$
and $\,\hY\nnh+\tY\nnh=\beta\hde\hh\alpha+\alpha\tde\beta\,$ gives
\[
\begin{array}{l}
(\hsc+\tsc)(\alpha\beta+\gm)^2\nh
+4(\alpha\beta+\gm)[\beta\hde\hh\alpha+b_1\w\hn\alpha+\tsc\gm
+2\hh\tsc\hh b_5\w]\\
\hskip180pt
=\,4[2\beta^2\hg(\hna\nnh\alpha,\nnh\hna\nnh\alpha)-\tsc\hh b_5^2\nh+b_6\w]
\end{array}
\]
due to (\ref{tdb}) and (\ref{asp}). 
Thus, $\,\beta\,$ is a root of a quadratic equation with coefficients
that are constant along $\,T\nh\tM\nh$, so that the leading coefficient must
equal zero:
\begin{equation}\label{lcz}
\alpha^2\nh(\hsc+\tsc)+4\alpha\hh\hde\hh\alpha
-8\hg(\hna\nnh\alpha,\nnh\hna\nnh\alpha)\,=\,0\hh,
\end{equation}
or else $\,\beta$, already constant along $\,T\nh\hM\nh$, would
be constant, contradicting (\ref{nnc}).

Locally in $\,\,U'\nh$, Lemma~\ref{smeps}(d) makes Lemma \ref{twops}
applicable to $\,\hg\,$ and $\,\alpha$, leading to
coordinates $\,x^1\nh,x^2$ for $\,\hM\,$ with (\ref{dae}), where 
$\,(\,\,)\dot{\,}$ denotes $\,\partial/\partial x^1\nh$. Setting
$\,c=\tsc/2$, we now rewrite (\ref{asp}) and (\ref{lcz}) as
\begin{equation}\label{add}
\mathrm{i)}\hskip6pt\alpha\nnh\dddot\alpha
=\dot\alpha(2\ddot\alpha+c\hh\alpha)\hh,\qquad
\mathrm{ii)}\hskip6pt\alpha^2\nnh\nh\dddot\alpha
=\dot\alpha(4\alpha\ddot\alpha-4\dot\alpha^2\nh+c\hh\alpha^2)\hh.
\end{equation}
Subtracting (\ref{add}-i) times $\,\alpha\,$ from (\ref{add}-ii) we get
$\,\alpha\ddot\alpha=2\dot\alpha^2$ which, differentiated, yields 
$\,\alpha\nnh\dddot\alpha=3\dot\alpha\ddot\alpha$, turning (\ref{add}-i), due
to (\ref{nnc}), into
$\,\ddot\alpha=c\hh\alpha$. Thus, $\,\dddot\alpha=c\hh\dot\alpha$, and
(\ref{dae}) gives $\,\hsc=2K\nnh=-2c$, contrary to the assumption that
$\,\hsc\,$ is nonconstant. 

This leaves us with the remaining case: $\,\ve=-\frac12\,$ and
$\,\hsc,\tsc\,$ are both constant. Combining parts (b) -- (d) of 
Lemma~\ref{smeps} with Lemma~\ref{bothc}, we get
$\,\hd\td\hi=0\,$ everywhere, which contradicts Lemma~\ref{prdwz},
thus completing the proof of Lemma~\ref{third}.
\begin{proof}[Proof of Theorem~\ref{cpsfm}]Due to Lemma~\ref{third}, in
Lemma~\ref{smeps}, 
$\,\hi=\ha+\ta$ with $\,\ha\nnh:\nnh\hM\nh\to\bbR\,$ and 
$\,\ta\nnh:\nnh\tM\nh\to\bbR$, cf.\ Lemma~\ref{smeps}(e) and (\ref{rco}), 
and either
\begin{equation}\label{chy}
\ve=1\hh,\quad
\hi(\hY\nnh+\tY)=2(\hQ+\tQ)\hh,\quad
2(\hY\nnh-\tY)=(\tsc-\hsc)\hi\hh,
\end{equation}
or $\,\ve=-\nh2$, and then, with $\,\ha,\ta\,$ chosen as in (\ref{rco}),
\begin{equation}\label{mul}
\begin{array}{l}
(\ha+\ta)^2\,(\hs\hsc+\tsc\hs)+2(\ha+\ta)(\hY\nnh+\tY)=4(\hQ+\tQ)\hh,\\
\hna\nh d\ha,\,\,\tna\nh d\ta\,\mathrm{\ are\ functional \
multiples\ of\ }\,\hg\,\mathrm{\ and\ }\,\tg.
\end{array}
\end{equation}
By Lemma \ref{prdwz}, locally, one of 
$\,\ha,\ta\,$ is constant and, switching them if necessary, we may assume
constancy of $\,\ta$, so that $\,\hi\,$ is constant in the
$\tM\,$ direction. Either of \eqref{chy} -- 
\eqref{mul} now implies, via separation of variables, that
$\,\tsc\,$ is constant, and (i) or (ii) in Theorem~\ref{exatt} follows, as
a consequence of (\ref{dqe}-b).
\end{proof}

\section{Proof of Theorem~\ref{cptho}}\label{cg}
\setcounter{equation}{0}
We will denote by $\,M\nnh\times\hn I\hs$ various sufficiently small
prod\-uct-type neighborhoods of $\,x\,$ in the original product manifold,
where $\,I\subseteq \bbR\,$ is an open interval with the coordinate 
$\,t\,$ and the metric $\,dt^2\nh$. We are thus given an oriented Riemannian
three-man\-i\-fold $\,(M\nh,g)\,$ and a function
$\,\hi:M\nnh\times\hn I\to\bbR\smallsetminus\{0\}$, such that 
$\,\bg=(g+dt^2)/\hi^2$ is a weakly Ein\-stein metric on
$\,M\nnh\times\hn I\nh$. 
Let us begin by showing that, under either of the assumptions (i) and (ii), 
$\,\hi\,$ has additively separated variables:
\begin{equation}\label{asv}
\hi\,=\,\alpha\,+\,\beta\,\mathrm{\ \ with\ \ }\,\alpha:M\to\bbR\,\mathrm{\ and\ }\,\beta:I\to\bbR\hh,
\end{equation}
while, for the
Ein\-stein tensor $\,\bei=\brc-\bsc\bg/4$ of $\,\bg\,$ 
and its Weyl tensor $\,\overline W\nnh\nh$, near $\,x$,
\begin{equation}\label{ewt}
\begin{array}{rl}
\mathrm{a)}&\mathrm{the\ \hs}M\mathrm{\ and\ }\nh\,I\mathrm{\hs\ factor\
distributions\ are\ }\hs\bei\hyp\mathrm{or\-thog\-o\-nal,}\\
\mathrm{b)}&\overline W\mathrm{\ \ and\ \ }\,\hs\bei\hs\mathrm{\ \ satisfy\
(\ref{ari}}\hyp\mathrm{c)\ 
with\ }\,\,c_2\w\hs=\,c_3\w\hs=\,c_4\w\hs=\,0\hh.
\end{array}
\end{equation}
More precisely, we will use (\ref{ure}) -- (\ref{eao}) below to 
establish (\ref{asv}) -- (\ref{ewt}) in each
connected component 
of the open dense set of points generic relative to the spectra of
$\,\ein\,$ and $\,\overline W^+\nnh\nnh$, in the sense of Remark~\ref{genrc}.
This will imply that (\ref{asv}) and (\ref{ewt}-a) hold everywhere, as they
are equalities. (The former reads $\,\hd\td\hi=0$, cf.\ Sect.~\ref{tl}.)

Now, if $\,u_1\w,\dots,u_4\w$ and $\,v\nh_1\w,\dots,v\nh_4\w$ are 
positive $\,\bg$-ortho\-nor\-mal frames such that
\begin{equation}\label{ure}
\begin{array}{l}
u_1\w,\dots,u_4\w\,\mathrm{\ realizes\ one\ of\ the\ conclusions\ 
\hh(\ref{ari})\hs\ in}\\
\mathrm{Theorem~\ref{weact}\ and,\nnh\ in\ particular,\nnh\ it\ 
di\-ag\-o\-nal\-izes\ }\,\hs\bei\hh,
\end{array}
\end{equation}
while the corresponding assumption in (\ref{sth}) is also satisfied, and 
\begin{equation}\label{vre}
\begin{array}{l}
v\nh_1\w,v\nh_2\w,v\nh_3\w\mathrm{,\ tangent\ to\ the\ }\hs M\hs\mathrm{\
factor,\nh\ di\-ag\-o\-nal\-ize\ the\ Ein\-stein}\\
\mathrm{tensor\ }\,\ein=\ric-\sca g/3\,\mathrm{\ of\ }\,g\,\mathrm{\ with\
some\ eigen\-valu\-e\ functions\ }\,\kp\nnh_i\w\hh,
\end{array}
\end{equation}
then, due to (\ref{wij}), for
the Weyl tensor $\,\overline W\nnh$ of $\,\bg$, and both
$\,\overline W^\pm\nnh$,
\begin{equation}\label{wsp}
2\hi^{-\nh2}\overline W^\pm\mathrm{\ have\ the\ }\,\bg\hyp\mathrm{spectrum\ 
}\,(-\kp_3\w,-\kp_2\w,-\kp_1\w)\mathrm{,\ realized\ as\ in\ (\ref{spr}).}
\end{equation}
We will derive (\ref{asv}) -- (\ref{ewt}) from the following observation.
\begin{equation}\label{eao}
\begin{array}{l}
\mathrm{In\hs\ case\hs\ (i),\hs\
}\,\,u_1\w,\dots,u_4\w\,\mathrm{\hs\ and\hs\
}\,v\nh_1\w,\dots,v\nh_4\w\,\mathrm{\hs\ arise\hs\ from\hs\ each\hs\ other}\\
\mathrm{via\ permutations,\nh\nnh\ possibly\nh\ combined\ with\ some\
sign\nh\ changes.}\\
\mathrm{In\hs\ case\hs\ (ii),\ we\ may\ assume\ that\
}\,(u_1\w,\dots,u_4\w)\hs=\hs(v\nh_1\w,\dots,v\nh_4\w)\hh.
\end{array}
\end{equation}
Namely, (\ref{wsp}) and the first part of Remark~\ref{dfeig} prove the first
part of (\ref{eao}). Assuming (i), we get $\,c_2\w=c_3\w=c_4\w=0\,$ in
(\ref{ari}), from (\ref{wsp}) and the final clause of Remark~\ref{dfeig},
which gives (\ref{ewt}-b) -- and hence (\ref{evs}) -- as (\ref{ari}-a) or
(\ref{ari}-b) would lead to 
repeated eigen\-valu\-es of $\,\overline W^\pm\nnh\nnh$. In view of 
(\ref{ure}) and (\ref{eao}), the frame $\,v\nh_1\w,\dots,v\nh_4\w$ now 
di\-ag\-o\-nal\-izes $\,\bei$. Consequently,  
(i) implies (\ref{ewt}-a), and then (\ref{rco}) yields (\ref{asv}). 
We have thus established (\ref{asv}) -- (\ref{ewt}) in the case (i). 

Next, let (ii) be satisfied. According to (\ref{sms}) and Remark~\ref{cvrsl},
our frame $\,u_1\w,\dots,u_4\w$ on $\,M\nnh\times\hn I\hs$ realizes -- due to
the assumption about $\,-\bsc/\nh12\,$ in (ii) -- both \hbox{(\ref{chb}-i)}
and (\ref{nbs}-i). More precisely, Remark~\ref{cvrsl} excludes the option 
(\ref{ari}-a) in Theorem~\ref{weact} while, in (\ref{chb}-i), 
$\,8\hh\xi=\bsc\ne0=c_2\w=c_3\w=c_4\w$, and so one has 
(\ref{ari}-c), that is, \hbox{(\ref{ewt}-b),} rather than (\ref{ari}-b).
\emph{The two spectra in\/ {\rm(\ref{ari}-c)} thus are\/}  
$\,(-\nnh\lambda,-\nnh\lambda,\lambda,\lambda)$ \emph{and\/} 
$\,(-\bsc/\nh12,\bsc/\nh24,\bsc/\nh24)$. Rearranging
$\,v\nh_1\w,v\nh_2\w,v\nh_3\w$ so
as to have $\,\kp_1\w=\kp_2\w$, we conclude, from (\ref{nbs}-i) 
and the final clause of Lemma~\ref{othzr}, that 
$\,\mathrm{span}\,(v\nh_1\w,v\nh_2\w)\,$ equals one of
$\,\mathrm{span}\,(u_1\w,u_2\w)\,$ and 
$\,\mathrm{span}\,(u_3\w,u_4\w)$, \emph{which are the
eigen\-dis\-tri\-bu\-tions of\/ $\,\bei\,$ for the eigen\-val\-ue functions\/ 
$\,-\lambda\,$ and\/} $\,\lambda$. The two italicized statements 
remain unaffected, up to a sign change in $\,\lambda$, when the pairs
$\,(u_1\w,u_2\w)\,$ and $\,(u_3\w,u_4\w)\,$ are switched and/or
independently rotated, which leads to the second part of (\ref{eao}).
By (\ref{ure}), this
gives (\ref{ewt}-a) and, again, we get (\ref{asv}) from (\ref{rco}).

We have thus shown that (\ref{asv}) -- (\ref{ewt}) hold in both cases (i)
and (ii).

One also has (\ref{eee}) with
$\,(n,p,q,\tsc,\tY\nnh\nh,\tQ,\tei,\tna\nh d\hi)
=(4,3,1,0,\ddot\beta,\dot\beta^2\nnh,0,\ddot\beta\,dt^2)$, and 
$\,\hM\nh,\tM\nh,\sca,\ein,\hg,\hna\nh,\hY\nnh\nh,\hQ,\tg\,$ 
replaced by $\,M\nh,I\nh,\bsc,\bei,g,\nabla\nh,Y\nnh\nh,Q,dt^2\nh$,
which obviously reads
\begin{equation}\label{enw}
\begin{array}{rl}
\mathrm{a)}&\bsc=\hi^2\sca+\hn6\hi(Y\nnh\nh+\ddot\beta)
-\hn12(Q+\hn\dot\beta^2)\mathrm{\ for\ }\,(Y\nnh\nh,Q)
=(\Delta\alpha,\,g(\nabla\nh\alpha,\nnh\nh\nabla\nh\alpha)),\\
\mathrm{b)}&12\hs\bei=12\hs\ein+24\hi^{-\nh1}\hn\nabla\nh d\alpha
+[\hs\sca-6\hi^{-\nh1}(Y\nh+\ddot\beta)]\hh g\,\mathrm{\ along\ }\,M\nh,\\
\mathrm{c)}&4\hh\bei\,
=\,-[\hs\sca+2\hi^{-\nh1}(Y\nnh-3\ddot\beta)]\,dt^2\hskip6pt\mathrm{\ along\
}\,I\nh\mathrm{,\ where\ }\,(\,\,)^{\boldsymbol{\cdot}}=d/dt.
\end{array}
\end{equation}
By (\ref{ewt}-b) and (\ref{ure}), $\,\bei\,$ has, the ordered spectrum 
$\,(-\nnh\nh\lambda,-\nnh\nh\lambda,\lambda,\lambda)\,$ 
or $\,(\lambda,\lambda,-\nnh\nh\lambda,-\nnh\nh\lambda)$, realized by
the frame $\,u_1\w,\dots,u_4\w$. Either of 
(i) -- (ii) implies -- cf.\ (\ref{eao}) -- that 
$\,v\nh_i\w$ in (\ref{vre}) are eigen\-vec\-tors of $\,\bei$, and we are free
to assume that $\,v\nh_1\w$ and $\,v\nh_2\w$ correspond to the same
$\,\bg$-eigen\-value function $\,\mp\lambda\,$ which, in the case (ii), or
(i), is 
obvious from (\ref{eao}) or, respectively, (\ref{eao}) coupled with 
(\ref{mod}) applied to $\,v\nh_i\w$, $\,i=1,2,3$. 
Consequently, by (\ref{enw}), $\,v\nh_i\w$, $\,i=1,2,3$, are
$\,g$-eigen\-vec\-tors of 
$\,\nabla\nh d\alpha\,$ with some eigen\-val\-ue functions $\,\delta_i\w$,
and (\ref{enw}) yields four separate expressions for $\,\lambda/\hi$, namely
\begin{equation}\label{fse}
\begin{array}{rl}
\mathrm{a)}&\mp2\lambda/\hi=(2\kp\nnh_i\w+\sca/6)\hi\,-Y\nnh-\ddot\beta
+4\delta_i\w\mathrm{,\ for\ }\,i=1,2\hh,\\
\mathrm{b)}&\pm2\lambda/\hi=(2\kp_3\w+\sca/6)\hi\,-Y\nnh-\ddot\beta
+4\delta_3\w\hh,\\
\mathrm{c)}&\pm2\lambda/\hi=-Y\nnh+3\ddot\beta-\sca\hh\hi/2\hh.
\end{array}
\end{equation}
Furthermore, for some simple-eigen\-val\-ue function $\,\kp\,$ of $\,\ein$,
\begin{equation}\label{sei}
\begin{array}{rl}
\mathrm{a)}&\bsc=6\hh\kp\hn\hi^2\,\mathrm{\ with\ 
}\,\,i\in\{1,2,3\}\,\,\mathrm{\ such\ that\ }\,\,\kp\hs=\hs\kp\nnh_i\w,\\
\mathrm{b)}&\mathrm{while,\ in\ both\ cases\ (i)\ and\ (ii),\
}\,\beta\,\mathrm{\ is\ constant.}
\end{array}
\end{equation}
Here is how (\ref{sei}-a) follows. Assuming (i), we choose
$\,\kp=\kp\nnh_i\w$ in (\ref{wsp}) so as to realize (\ref{evs}), which
we already proved in the third line after (\ref{eao}). If (ii) is satisfied, 
we let $\,\kp\,=\kp_3\w$, so that $\,\kp_1\w=\kp_2\w=-\kp\hn/2$, and (ii)
combined with (\ref{wsp}) yields (\ref{sei}-a).

In the case (i), $\,\kp_1\w\ne\kp_2\w$ by (\ref{wsp}), and to obtain 
(\ref{sei}-b), that is, the equality $\,\partial\hh\hi/\partial\hh t=0$, 
it suffices to subtract the $\,i=1\,$ and $\,i=2\,$ versions of (\ref{fse}-a).

Next, assuming (ii), and letting $\,M\nnh\times\hn I\hs$ be a small
neighborhood of a point at which $\,\dot\beta\ne0$, we will arrive at a
contradiction. As before, $\,\kp_3\w=\kp\,$ and
$\,\kp_1\w=\kp_2\w=-\kp\hn/2$. Adding (\ref{fse}-a) for 
$\,i=1\,$ to (\ref{fse}-b), we get
$\,(\kp+\sca/3)(\alpha+\beta)=2(Y\nnh+\ddot\beta)
-4(\delta_1\w+\delta_3\w)$, and $\,d/dt\,$ applied to this, via 
separation of variables, gives 
$\,\dddot\beta=p\dot\beta$, where $\,2p=\kp+\sca/3\,$ is
constant. Thus, 
$\,\ddot\beta=p\beta+q\,$ and $\,\dot\beta^2\nh=p\beta^2\nh+2q\beta+q_1\w$ 
for some $\,q,q_1\w\in\bbR$. Now (\ref{sei}-a), (\ref{asv}) and
(\ref{enw}-a) yield
\[
(\sca-6\kp)(\alpha+\beta)^2\nh+6(\alpha+\beta)(Y+p\beta+q)-12(Q+p\beta^2\nh
+2q\beta+q_1\w)=0\hh,
\]
which is a quadratic equation with coefficients 
that are constant along $\,I\nh$, imposed on $\,\beta$. The
leading coefficient must therefore vanish: $\,0=\sca-6\kp+6p-12p=-9\kp$, 
as $\,2p=\kp+\sca/3$. With (\ref{wsp}) and 
$\,(\kp_1\w,\kp_2\w,\kp_3\w)=(-\kp/2,-\kp/2,\kp)$, 
this gives $\,\overline W\nnh\nh=\hs0$, providing the required
contradiction and, consequently, proving (\ref{sei}-b).

Using (\ref{sei}-b) and (\ref{asv}), we may set $\,\hi=\alpha$, with
$\,\beta=0$.

For $\,\kp\,$ and $\,i\,$ chosen in (\ref{sei}-a), and $\,v\,$ equal to 
any positive functional multiple of $\,v\nh_i\w$, we now clearly have
(\ref{spc}-iv), as well as (\ref{spc}-iii), the latter from (\ref{sei}-a)
and (\ref{enw}-a) with $\,(\alpha,\beta)=(\hi,0)$.

According to the lines following (\ref{enw}),
$\,v\nh_1\w$ and $\,v\nh_2\w$ correspond to the same 
$\,\bg$-eigen\-value function $\,\mp\lambda$, and so does
one of the pairs 
$\,(u_1\w,u_2\w)\,$ and $\,(u_3\w,u_4\w)$. Thus, as a 
consequence of
(\ref{ewt}-b) and (\ref{ari}-c),
$\,v\nh_1\w\wedge v\nh_2\w\pm v\nh_3\w\wedge v\nh_4\w$ are
eigen\-bi\-vec\-tors of $\,\overline W^+\nnh\nnh$ for the eigen\-val\-ue
$\,-\bsc/\nh12$. Now (\ref{wsp}) gives (\ref{sei}-a) for $\,i=3$. Hence
$\,\kp=\kp_3\w$, and $\,v\,$ equals a positive function times $\,v\nh_3\w$.
With $\,\sym=2\nabla\nh d\hi+\hi\hh\ric\,$ and $\,(\alpha,\beta)=(\hi,0)$,
(\ref{enw}-b) reads 
$\,\hi\hs\bei=\sym-(\mathrm{tr}_g\w\sym)g/4$, 
and $\,\mathrm{tr}_g\w\sym=2\hs Y\nnh+\sca\hi$. Let $\,\psi=\pm\lambda/\hi$. 
Now $\,v\nh_1\w,v\nh_2\w$ and $\,v\,$ di\-ag\-o\-nal\-ize $\,\bei\,$ and
$\,\sym$, with the ordered $\,g$-spec\-trum of $\,\hi\hs\bei\,$ equal to
$\,(-\psi,-\psi,\psi)$, while (\ref{fse}-c) with $\,\beta=0\,$ yields
$\,\mathrm{tr}_g\w\sym=2\hs Y\nnh+\sca\hi=-\nh4\psi$. The resulting
equality $\,\hi\hs\bei=\sym+\psi g$, with
$\,\hi\hs\bei(v,\,\cdot\,)=\psi g(v,\,\cdot\,)$ and
$\,\hi\hs\bei(v\nh_i\w,\,\cdot\,)=-\psi g(v\nh_i\w,\,\cdot\,)\,$ for
$\,i=1,2$, gives $\,\sym(v,\,\cdot\,)=0$, proving (\ref{spc}-ii), and
$\,\sym(v\nh_i\w,\,\cdot\,)=-2\psi g(v\nh_i\w,\,\cdot\,)$. Changing the
sign of $\,\hi\,$ (and hence of $\,\sym$), if necessary, so as to replace
the last expression with $\,-2|\psi|g(v\nh_i\w,\,\cdot\,)$, and then, suitably 
normalizing $\,v$, we obtain (\ref{spc}-i).

This completes the proof of Theorem~\ref{cptho}.

\section{Properties of the new examples}\label{pn}
\setcounter{equation}{0}
In Sect.\,\ref{gk} and \ref{gc} we described examples of proper weakly 
Einstein con\-for\-mal products arising from Theorems~\ref{exatt} and
\ref{exaot}. We now show that, with just one exception, those examples are
new, that is, different from the ones known before. 

The previously known examples of proper weakly Einstein manifolds formed two 
narrow classes, consisting of the EPS space (Section~\ref{ep}) and certain 
K\"ah\-ler surfaces \cite[Sect.\,12]{derdzinski-euh-kim-park}.
The lo\-cal-homo\-thety invariant of Remark~\ref{htinv} is
given by
\[
\begin{array}{l}
\sca\hh,\,\,\,(\sca/2,0,0,-\sca/2)\hh,\,\,\,
(-\sca/\nh12,-\sca/\nh12,\sca/6)\hh,\,\,\,
(-\sca/\nh12,-\sca/\nh12,\sca/6)\,\,\,\mathrm{for\ the\ former,}\\
\sca\hh,\quad(-\nnh\lambda,-\nnh\lambda,\lambda,\lambda)\hh,\quad
(\sca/6,-\sca/\nh12,-\sca/\nh12)\hh,\quad
(-\sca/3,\sca/6,\sca/6)\quad\mathrm{for\ the\ latter,}
\end{array}
\]
at every point, with the scalar curvature 
$\,\sca\ne0\,$ and a parameter $\,\lambda\ne0$, as pointed out in
\cite[Sect.\,10 and formulae (1.6)--(1.7)]{derdzinski-park-shin}.
Thus, none of the K\"ah\-ler-sur\-face examples in
\cite{derdzinski-euh-kim-park} is, even locally, a con\-for\-mal product, 
since, by (\ref{wpm}) and (\ref{wij}), in a con\-for\-mal product 
both $\,W^\pm\nnh$ must have the same spectrum.

The EPS space is, however, a warped product, and hence a con\-for\-mal 
product; see (\ref{fet}) and (\ref{wrp}). It is realized by the
construction of Sect.\,\ref{gk}, case (i), with $\,c=0\,$ and any $\,\hi\,$
such that $\,\dot\hi=2p\,$ is a nonzero constant. In fact, (\ref{phq})
follows, without (\ref{cfl}) or (\ref{eip}), and (\ref{eli}-i) gives
$\,\chi=q+2\log|\hi|$, where $\,q\in\bbR$, so that, by
(\ref{goe}-a), $\,\bg=(g+h)/\hi^2\nh
=p^2\hn[\hh d\vt^2\nh+\hh e^{-\nh\evt}\hn d\xi^2\nh
+\hh e\hs^\evt(d\eta^2\hs+\,d\zeta^2)]$, as required in (\ref{eps}), for 
$\,(\vt,\xi)=(-\nh2\log|\hi|,\hs e^qy/p)\,$ and $\,\eta,\zeta\,$ 
with $\,h=p^2\nh(d\eta^2\hs+\,d\zeta^2)$. Also,
\begin{equation}\label{onl}
\mathrm{this\ is\ the\ only\ way\ to\ obtain\ the\ EPS\ space\ in\
Sect.\,\ref{gk},}
\end{equation}
as (\ref{lhi}) and the formula displayed above
force us to use case (i), while (\ref{ttu}-i) gives $\,c=0$,
and (b) in Sect.\,\ref{gk}, with $\,c=0\,$ and constant $\,\bsc$, implies
constancy of $\,\dot\hi$. 

By the \emph{local co\-ho\-mo\-ge\-ne\-i\-ty\/} of a Riemannian manifold
$\,(M\nh,g)\,$ we mean the minimum co\-di\-men\-sion in
$\,T\hskip-3pt_x\w\hn M\nh$, over all $\,x\in M\nh$, of the sub\-space
of $\,T\hskip-3pt_x\w\hn M\,$ consisting of the values at $\,x\,$ of all
Kil\-ling fields defined on neighborhoods of $\,x$. 

The local co\-ho\-mo\-ge\-ne\-i\-ty of the EPS space is obviously zero. 
For the K\"ah\-ler-sur\-face examples in \cite{derdzinski-euh-kim-park}
it equals one: see \cite[Remark 12.1]{derdzinski-euh-kim-park}. The 
proper weakly Einstein metrics described in Sect.\,\ref{gc} are of \emph{local 
co\-ho\-mo\-ge\-ne\-i\-ty two}, due to the func\-tion\-al-in\-de\-pend\-ence
conclusion of Remark~\ref{sglbd}, and Remark~\ref{isowp} (the latter applicable
here since they are warped products with the fibre provided by a surface of
constant Gauss\-i\-an curvature). Since the EPS space is known
\cite{arias-marco-kowalski} to be, up to lo\-cal homo\-thety, the only
locally homogeneous proper weakly Einstein manifold, (\ref{onl}) combined
with Remark~\ref{isowp} shows that all the remaining proper weakly Einstein
metrics of Sect.\,\ref{gk}, other than those of case (i) with 
$\,c=0\,$ and constant $\,\dot\hi$, have local co\-ho\-mo\-ge\-ne\-i\-ty one. 
They are thus different from the examples of Sect.\,\ref{gc}.

The lo\-cal-homo\-thety invariants (\ref{lhi}) of the metrics in
Sect.\,\ref{gk} realize, at various points, all pairs $\,(\bsc,\lambda)\,$ 
with $\,\bsc\hh\lambda\ne0$, as one sees using (b), (e) in Sect.\,\ref{gk},
Remark~\ref{hmthi}, and a suitable choice of initial data for (\ref{phq}).
The same applies to Sect.\,\ref{gc}: see Remark~\ref{sglbd} and
(\ref{ode}).

\section{Harmonic curvature}\label{hc}
\setcounter{equation}{0}
One says that a Riemannian manifold or metric has \emph{harmonic curvature\/}
when the divergence of its curvature tensor vanishes identically or,
equivalently, its Ric\-ci tensor satisfies the Co\-daz\-zi equation. Obvious
examples are provided by 
\begin{equation}\label{obv}
\begin{array}{l}
\mathrm{Einstein\ metrics,\nnh\ conformally\ flat\ metrics\ having\ constant\
scalar\ }\\
\mathrm{curvature,\hs\ and\hs\ products\hs\ of\hs\  manifolds\hs\ with\hs\
harmonic\hs\ curvature.}    
\end{array}
\end{equation}
See \cite[Sect.\,16.33]{besse}. In dimension four, some further such metrics
have the form 
\begin{equation}\label{kpc}
\begin{array}{l}
(K\nnh\hn+\hs c)^{-2}(g+h)\mathrm{,\ for\ the\ product\
}\hs\,g+h\hs\,\mathrm{\ of\ surface\ metrics\ }\hs\,g,h\\
\mathrm{with\ Gaussian\ curvatures\ }\,K\mathrm{\ and\
}\,c\mathrm{,\ where\ both\ }\,c\hs\mathrm{\ and\ the\ func}\hyp\\
\mathrm{tion\ }\,\,\gamma\hs
=\hs(K\nnh\hn+\hs c)^3\hs+\,3(K\nnh\hn+\hs c)\Delta K\,
-\,6g(\nabla\!K,\nabla\!K)\,\,\mathrm{\ are\ constant,} 
\end{array}
\end{equation}
while $\,K\nnh\hn+\hs c\ne0\,$ everywhere on the product four-man\-i\-fold
\cite[Lemma 3]{derdzinski-88}. 

As shown by DeTurck and Goldschmidt \cite{DG}, harmonic curvature implies 
\begin{equation}\label{rea}
\mathrm{real}\hyp\mathrm{analyticity\ of \ the \ metric\ in \ suitable \ local
\ coordinates.}
\end{equation}
\begin{rem}\label{unque}
According to \cite[Proposition 3(i)]{derdzinski-83}, a conformal change
leading from a product $\,\bg\,$ of two surface metric, at points where the
scalar curvature
$\,\bsc\,$ of $\,\bg\,$ is nonzero, 
to a metric with harmonic curvature, if it exists, is -- up to a constant
factor -- unique, and consists in division by $\,\bsc^2\nh$.
\end{rem}
\begin{lem}\label{lem:11.1}
No proper weakly Einstein metric arises from\/ {\rm(\ref{kpc})}.
\end{lem}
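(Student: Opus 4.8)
The plan is to realize the metric (\ref{kpc}) as a $2+2$ conformal product and bring the classification Theorem~\ref{cpsfm} to bear. Writing $\hi=K+c$, the metric (\ref{kpc}) is exactly $(g+h)/\hi^2$ on $\varSigma\times\varPi$, where $(\varSigma,g)$ is the surface of Gaussian curvature $K$ and $(\varPi,h)$ the surface of constant curvature $c$; the conformal factor $\hi$ is a function on the base $\varSigma$ alone, constant along the fibre $\varPi$, and $\nabla,\Delta$ are those of $g$. I would assume, toward a contradiction, that this metric is proper weakly Einstein.

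First I would produce a nonempty open set $U$ on which both $\ein\ne0$ and $W\ne0$. Since the metric is not Einstein, $\ein\not\equiv0$; and it cannot be conformally flat, for otherwise (\ref{iff}) would give $\sca\,\ein\equiv0$, whence by Remark~\ref{sezro} either $\ein\equiv0$ or $\sca\equiv0$ with $W=0$, each contradicting properness, so $W\not\equiv0$. Because (\ref{kpc}) has harmonic curvature, the metric is real-analytic in suitable coordinates by (\ref{rea}), so the zero loci of $|\ein|^2$ and $|W|^2$ are nowhere dense; thus $\{\ein\ne0\}$ and $\{W\ne0\}$ are open and dense, and their intersection $U$ is nonempty. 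On $U$, Theorem~\ref{cpsfm} guarantees that the metric arises from Theorem~\ref{exatt}, so one of its conditions (i) or (ii) holds there, with $\hi=K+c$.

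The core is then a short substitution into the (constant) quantity $\gamma=(K+c)^3+3(K+c)\Delta K-6\,g(\nabla K,\nabla K)$. In case~(i), the condition $\Delta\hi^{-1}=0$ together with (\ref{dqe}-b) gives $2\,g(\nabla K,\nabla K)=(K+c)\Delta K$, and inserting this into $\gamma$ collapses it to $\gamma=(K+c)^3$; constancy of $\gamma$ forces $K+c$ constant, hence $\nabla K=0$, and the second relation of (i) yields $(c-K)(K+c)^2=0$, i.e.\ $c=K$ since $K+c\ne0$. A product of two surfaces of equal constant Gaussian curvature is Einstein (cf.\ Remark~\ref{noprd}), and a constant rescaling preserves this, so $\ein=0$ on $U$, contradicting $\ein\ne0$. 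In case~(ii), the second relation reads $(K+c)^3+(K+c)\Delta K=2\,g(\nabla K,\nabla K)$, so substituting into $\gamma$ gives $\gamma=-2(K+c)^3$; again $K+c$ must be constant, and the same relation then forces $(K+c)^3=0$, contradicting $K+c\ne0$.

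Either case is impossible on $U$, so no proper weakly Einstein metric can have the form (\ref{kpc}). I expect the only genuinely delicate point to be the density step: one must know that $\{\ein\ne0\}$ and $\{W\ne0\}$ actually meet, which is precisely where real-analyticity from harmonic curvature (\ref{rea}) is essential. Once an open set $U$ with both tensors nonzero is secured, Theorem~\ref{cpsfm} applies and the rest reduces to the two algebraic collapses of $\gamma$ above.
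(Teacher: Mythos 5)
Your proof is correct and takes essentially the same route as the paper's: Theorem~\ref{cpsfm} reduces matters to case (i) or (ii) of Theorem~\ref{exatt} with $\,\hi=K\nnh\hn+\hs c$, and substitution via (\ref{dqe}-b) collapses the constant $\,\gamma\,$ of (\ref{kpc}) to $\,(K\nnh\hn+\hs c)^3$ or $\,-2(K\nnh\hn+\hs c)^3$, forcing $\,K\,$ to be constant and yielding a contradiction. The only differences are minor: you make explicit, via real-analyticity (\ref{rea}), the existence of points where $\,\ein\ne0\,$ and $\,W\nnh\nh\ne\hs0\,$ (a step the paper leaves implicit), and you read off $\,\hi\,$ from the given decomposition, whereas the paper pins $\,\hi\,$ down as a constant multiple of $\,K\nnh\hn+\hs c\,$ by the uniqueness statement of Remark~\ref{unque}, and concludes through Remark~\ref{noprd} rather than through your endgame equalities $\,\ein=0\,$ and $\,(K\nnh\hn+\hs c)^3\nh=0$.
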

\begin{proof}
If it did, Theorem \ref{cpsfm} would give (i) or (ii) in
Theorem~\ref{exatt} with $\,\hi$ equal, locally, to a constant multiple of
$\,K\nnh\hn+\hs c\,$ (Remark~\ref{unque}). From (\ref{dqe}-b) it would now
follow that, for the constant $\gamma$ in (\ref{kpc}),
$\,\gamma-(K\nnh\hn+\hs c)^3$ equals either $\,0\,$ or $\,-3(K\nnh\hn+\hs c)^3$,
implying constancy of $\,K$, contrary to Remark~\ref{noprd}.
\end{proof}
\begin{thm}\label{nwehc}
There exists no proper weakly Einstein four-di\-men\-sion\-al Riemannian
manifold with harmonic curvature.
\end{thm}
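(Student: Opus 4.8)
The plan is to assume, for contradiction, that $(M,g)$ is a proper weakly Einstein Riemannian four-manifold with harmonic curvature, and to funnel it into the situation already excluded by Lemma~\ref{lem:11.1}. First I would extract the two standing consequences of harmonic curvature. Contracting the Codazzi equation for $\ric$ against the contracted second Bianchi identity forces $d\sca=0$, so the scalar curvature is a constant; consequently $\ein=\ric-\sca g/4$ is itself a trace-free Codazzi tensor. By (\ref{rea}) the metric is moreover real-analytic in suitable coordinates, so it suffices to argue on the dense open set of points that are generic relative to the spectra of $\ein$ and $W^\pm$ (Remark~\ref{genrc}), every resulting equality then propagating to all of $M$ by analyticity and connectedness.

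The decisive reduction is to show that, near such a generic point, $(M,g)$ is locally a conformal product, and this is where the results of \cite{derdzinski-25} enter. At a generic point Theorem~\ref{weact} places the pair $(\ein,W^\pm)$ into one of the normal forms (\ref{ari}-a)--(\ref{ari}-c) (with $\sca=0$ only in the first); combining this algebraic structure with the fact that $\ein$ is Codazzi, the integrability results of \cite{derdzinski-25} force the eigenspace distributions of $\ein$ to be integrable and $g$ to be locally conformal to a Riemannian product, i.e. a $2+2$ or a $3+1$ conformal product. Once this is in hand I would invoke Theorems~\ref{cpsfm} and~\ref{cptho} to identify $g$, locally at points where $\ein\neq0$ and $W\neq0$, with one of the explicit constructions of Theorems~\ref{exatt}--\ref{exaot}.

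Harmonic curvature is then used a second time, now through the constancy of $\sca$. In the $2+2$ branch, a harmonic-curvature metric conformal to a product of surfaces is, by \cite{derdzinski-88}, necessarily of the form (\ref{kpc}): the uniqueness statement of Remark~\ref{unque} pins the conformal factor to a constant multiple of $\bsc$, which for a surface product is a multiple of $K+c$, so the metric matches (\ref{kpc}) exactly, and Lemma~\ref{lem:11.1} closes this case. In the $3+1$ branch I would feed constancy of $\bsc$ into the output of Theorem~\ref{cptho}: the relation $\bsc=6\hh\kp\hh\hi^2$ of (\ref{sei}-a), together with the defining equations of Theorem~\ref{exaot}, over-determines the data $(g,v,\hi,\kp)$ and forces either $v\equiv0$ or $g$ of constant sectional curvature, i.e. $\bg$ Einstein or conformally flat. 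Either alternative is impossible for a proper metric (for the second, recall from (\ref{iff}) and (\ref{sezro}) that a conformally flat weakly Einstein metric is never proper), and a degeneration of the warped structure to an honest Riemannian product is excluded by Remark~\ref{noprd}.

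I expect the main obstacle to be the reduction to a conformal product, namely the step imported from \cite{derdzinski-25}, since everything downstream amounts to bookkeeping inside the already-established classification. The delicate point is that the Codazzi property of $\ein$ yields integrability of its eigenspace distributions only where the eigenvalue multiplicities are locally constant, so the genericity framework of Remark~\ref{genrc} and the real-analytic continuation supplied by (\ref{rea}) must be combined carefully to pass from the generic locus to a conclusion valid near an arbitrary point. A secondary difficulty is making sure the $3+1$ branch of Theorem~\ref{cptho} is genuinely closed off by the constancy of $\bsc$, rather than merely reduced to a lower-cohomogeneity subcase that would require a separate direct argument.
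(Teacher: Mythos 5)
There is a genuine gap, and it sits exactly where you predicted: the reduction to a conformal product. You assert that the Codazzi property of $\,\ein\,$ together with the normal forms of Theorem~\ref{weact} forces, via unspecified ``integrability results of \cite{derdzinski-25}'', that $\,g\,$ is locally conformal to a Riemannian product. No such general statement is available, and it is false as stated: constancy of eigenvalue multiplicities of a Codazzi tensor makes its eigenspace distributions integrable, but integrability alone does not produce a conformal product structure (one needs, e.g., umbilicity/geodesibility of the resulting foliations), and harmonic-curvature metrics of type (\ref{obv}) -- Einstein ones, say -- need not be conformal products at all. The paper's proof does not pass through ``conformal product'' as an intermediate step. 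Instead it splits on the number of distinct Ricci eigenvalues: when there are four (on a dense open set, by (\ref{rea})), it uses the fact from \cite[the lines following formula (6)]{derdzinski-88} that a Ricci-diagonalizing frame diagonalizes $\,W^+$ and $\,W^-$ \emph{with the same ordered spectra}, whence Remark~\ref{dfeig} gives (\ref{ari}-a) or (\ref{ari}-b) with $\,c_2\w=c_3\w=c_4\w=0$; case (\ref{ari}-a) is killed by Remark~\ref{sezro}, and in case (\ref{ari}-b) the repeated $\,W^\pm$ eigenvalue lets \cite[Lemma 5.4]{derdzinski-25} conclude the metric is of type (\ref{obv}) or (\ref{kpc}); when there are fewer than four eigenvalues everywhere, \cite[Theorem 2.2(c)]{derdzinski-25} gives the same dichotomy directly. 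Both outcomes are then excluded: (\ref{obv}) by Remarks~\ref{noprd}--\ref{sezro}, and (\ref{kpc}) by Lemma~\ref{lem:11.1} (which is where Theorem~\ref{cpsfm} actually enters). Your proposal cites the correct external sources but not the results that do the work, and the step you lean on them for is precisely the one they do not supply in the form you need.

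Even granting the reduction, your $\,3+1\,$ branch would not close. Theorem~\ref{cptho} covers only the generic case (i) and the single nongeneric subcase (ii) in which the simple eigenvalue of $\,\overline W^+$ is $\,-\bsc/\nh12$; by (\ref{uns}) the nongeneric simple eigenvalue may instead be $\,-\bsc/3\,$ or $\,\bsc/6$, and those $\,3+1\,$ conformal products are explicitly left unclassified (see the Introduction and the closing remarks of Sect.\,\ref{cl}). So ``feeding constancy of $\,\bsc\,$ into the output of Theorem~\ref{cptho}'' cannot exclude all $\,3+1\,$ possibilities, and the over-determination argument you sketch for the data $\,(g,v,\hi,\kp)\,$ is in any case only asserted, not carried out. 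The paper's route through the harmonic-curvature classification of \cite{derdzinski-25} sidesteps this entirely, since the dichotomy (\ref{obv})-or-(\ref{kpc}) is established without ever needing a complete classification of weakly Einstein $\,3+1\,$ conformal products. Your $\,2+2\,$ branch, by contrast, is essentially sound and matches the mechanism of Lemma~\ref{lem:11.1} (Remark~\ref{unque} pinning the conformal factor to a multiple of $\,K\nnh+c$, then (\ref{dqe}-b) forcing constancy of $\,K$, contradicting Remark~\ref{noprd}), modulo attention to the locus $\,\bsc=0\,$ where Remark~\ref{unque} does not apply.
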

\begin{proof}
Assuming, on the contrary, the existence of such an oriented manifold, we
will show, by
considering two separate cases, that its metric must then have the form
(\ref{obv}) or 
(\ref{kpc}), which will in turn contradict Remarks \ref{noprd} --
\ref{sezro} or, respectively, Lemma {\blue{\ref{lem:11.1}}}, and hence
complete the proof. 

In the first case, the Ricci tensor $\,\ric\,$ has four distinct eigenvalues at
some point, and so -- by (\ref{rea}) -- at all points of some dense open set
$\,\,U\nh$. For a fixed local orientation in $\,\,U\nh$, 
as shown in \cite[the lines following formula (6)]{derdzinski-88}, if an
or\-tho\-nor\-mal frame $\,u_1\w,\dots,u_4\w$ diagonalizes $\,\ric$, then
$\,W^+$ 
and $\,W^-$ are diagonalized, with the some ordered spectra for both, by the
corresponding bivectors \eqref{bas}. Remark~\ref{dfeig} now leads to
(\ref{ari}-a) or (\ref{ari}-b) in Theorem~\ref{weact}, with
$\,c_2\w=c_3\w=c_4\w=0$. Due to Remark~\ref{sezro}, the case (\ref{ari}-a),
implying con\-for\-mal flatness, is excluded. In (\ref{ari}-b), 
$\,W^{\pm}$ has a repeated eigenvalue, and
\cite[Lemma 5.4]{derdzinski-25} then shows that the metric is of type
(\ref{obv}) or (\ref{kpc}).
On the other hand, if $\,\ric$ has fewer than four eigen\-valu\-es 
at each point, \cite[Theorem 2.2(c)]{derdzinski-25} yields (\ref{obv}) or
(\ref{kpc}).
\end{proof}





\begin{thebibliography}{99}

\bibitem{arias-marco-kowalski}
T.\,Arias-Marco and O.\,Kowalski, \textit{Classification of $4$-dimensional homogeneous weakly Ein\-stein manifolds}, Czechoslovak Math.\,J. \textbf{65}(140) (2015), no.\,1, 21\hs--59.

\bibitem{besse} A.\,L.\,Besse, \textit{Einstein Manifolds}, Ergebnisse der
Mathematik und ihrer Grenzgebiete (3), Springer-Verlag, Berlin, 1987.

\bibitem{derdzinski-83} A.\,Derdzi\'nski, \textit{Self-dual Käh\-ler manifolds and Ein\-stein manifolds of dimension four}, Compos.\ Math. \textbf{49} (1983)
no.\,3, \hbox{405\hs--433}.

\bibitem{derdzinski-88}A.\,Derdzi\'nski, \textit{Riemannian metrics with 
harmonic curvature on $\,2$-sphere bundles over compact surfaces}, 
Bull.\,Soc.\,Math.\,France \textbf{116} (1988), no.\,2, 133\hh--\nh156.

\bibitem{derdzinski-00} A.\,Derdzinski, \textit{Ein\-stein metrics in
dimension four}, Handbook of Differential Geometry, vol. I,
pp.\,\hbox{419\hs--707}. North-Holland, Amsterdam (2000).

\bibitem{derdzinski-25} A.\,Derdzinski, \textit{Harmonic curvature in dimension
four}, J.\,Korean Math.\,Soc. \textbf{62} (2025), no.\,1, 217--252.

\bibitem{derdzinski-euh-kim-park} A.\,Derdzinski, Y\nnh.\,Euh, S.\,Kim and
J.\,H.\,Park, \textit{On weakly Ein\-stein K\"ah\-ler surfaces}, 
In\-ter\-nat.\,J.\,Math. \textbf{34} (2025), no.\,14, art. 2550057, 23 pp.

\bibitem{derdzinski-park-shin} A.\,Derdzinski, J.\,H.\,Park and W\nnh.
Shin, \textit{Weakly Ein\-stein curvature tensors}, preprint, available from
https:/\hskip-1.7pt/arxiv.org/abs/2504.18752.

\bibitem{derdzinski-piccione-20} A.\,Derdzinski and P\nnh.\,Piccione, 
\textit{Maximally-warped metrics with harmonic curvature}, Contemp.\,Math.
\textbf{756}, 
American Mathematical Society, Providence, RI, 2020, 83--96.

\bibitem{DG} D.\,DeTurck and H.\,Goldschmidt, \textit{Regularity theorems in
Riemannian geometry. II. Harmonic curvature and the Weyl tensor}, Forum Math.
\textbf{1} (1989), no.\,4, 377-394.

\bibitem{euh-kim-nikolayevsky-park}
Y\nnh.\,Euh, S.\,Kim, Y\nnh.\,Nikolayevsky and J.\,H.\,Park, \textit{On
weakly Ein\-stein Lie groups}, Medi\-terr.\,J.\, Math.
\textbf{22} (2025), art.\,86, 11 pp.

\bibitem{EPS-MS}
Y\nnh.\,Euh, J.\,H.\,Park and K.\,Sekigawa, \textit{A generalization of a
$\,4$-dimensional Ein\-stein manifold}, Math. Slovaca \textbf{63} (2013),
no.\,3, \hbox{595\hs--610}.

\bibitem{EPS-RM}
Y\nnh.\,Euh, J.\,H.\,Park and K.\,Sekigawa, \textit{A curvature identity on a
$\,4$-dimensional Riemannian manifold}, Results Math. \textbf{63} (2013),
no.\,1-2, 107--114. 

\bibitem{yano}K\nh.\,Yano, \textit{Con\-cir\-cu\-lar geometry. I.
Con\-cir\-cu\-lar transformations}, Proc.\,Imp.\,Acad.\,Tokyo \textbf{16}
(1940), 195--200.

\end{thebibliography}
\end{document}